\numberwithin{equation}{section}
\theoremstyle{plain}
\newtheorem{thm}{Theorem}[section]
\newtheorem{defn}{Definition}[section]
\newtheorem{prop}{Proposition}[section]
\newtheorem{cor}{Corollary}[section]
\newtheorem{remark}{Remark}[section]
\newcommand{\floor}[1]{\left\lfloor #1 \right\rfloor}
\newcommand{\cC}{\mathcal{C}}
\newcommand{\cJ}{\mathcal{J}}
\newcommand{\cU}{\mathcal{U}}
\newcommand{\cV}{\mathcal{V}}
\newcommand{\pa}{{\partial}}
\newcommand{\ignore}[1]{}
\def \a{\alpha}
\def \b{\beta}
\def \e{\varepsilon}
\def \d{\delta}
\def \1{\mathbf 1}
\def \E{\mathbb{E}}
\def \M{\mathbb{M}}
\def \N{\mathbb{N}}
\def \R{\mathbb{R}}
\newcommand{\comm}[1]{}
\DeclareMathOperator{\arctanh}{arctanh}
\def\@setcopyright{}
\def\serieslogo@{}
\begin{document}

\title{Finite-Time 4-Expert Prediction Problem}
\author{Erhan Bayraktar} 
\address{Department of Mathematics, University of Michigan}
\email{erhan@umich.edu}
\author{Ibrahim Ekren}
\address{Department of Mathematics, Florida State University}
\email{iekren@fsu.edu}
\author{Xin Zhang} 
\address{Department of Mathematics, University of Michigan}
\email{zxmars@umich.edu}
\keywords{machine learning, expert advice framework, asymptotic expansion, inverse Laplace transform, regret minimization, Jacobi-theta function.}
\subjclass[2010]{68T05, 35K55, 35K65, 35Q91.}

\date{\today}

\maketitle
\begin{abstract}
We explicitly solve the nonlinear PDE that is the continuous limit of dynamic programming equation \emph{expert prediction problem} in finite horizon setting with $N=4$ experts. The \emph{expert prediction problem} is formulated as a zero sum game between a player and an adversary. By showing that the solution is $\mathcal{C}^2$, we are able to show that the comb strategies, as conjectured in \cite{MR3478415}, form an asymptotic Nash equilibrium. We also prove the ``Finite vs Geometric regret" conjecture proposed in \cite{2014arXiv1409.3040G} for $N=4$, and show that this conjecture in fact follows from the conjecture that the comb strategies are optimal.
\end{abstract}

\begin{singlespace}
 \renewcommand{\contentsname}{\centering Contents}
{\small\tableofcontents}
\end{singlespace}
\onehalfspacing

\section{Introduction}
In this paper, we explicitly solve the degenerate nonlinear PDE with $N=4$
\begin{align}\label{eq:pde}
\pa_t u^T(t,x)+\frac{1}{2}\sup_{J \in P(N)}e_{J}^\top \pa^2_{xx} u^T(t,x)e_J=0, \notag\\
u^T(T,x)=\Phi(x):=\max_i{x_i},
\end{align}
where $P(N)$ is the power set of $\{1, \dotso, N\}$ and $e_J:=\sum_{j\in J}e_j$ with $\{e_j\}_{j \in \{1, \cdots, N\}}$ representing the standard basis of $\mathbb{R}^N$. 
Kohn and Drenska \cite{MR3768426,Drenska2019} showed that this equation has a unique viscosity solution, which is the continuous limit of dynamic programming equation of the Expert Prediction Problem with finite stopping. The Expert Prediction Problem is a zero sum game between a player and an adversary (see e.g. \cite{MR3478415}). Here we construct this unique viscosity solution explicitly 
\begin{align}
u^T(t,x)=&\frac{-1}{16\sqrt{2}}\int_{-\infty}^{\infty} \frac{e^{-(T-t)r^2}}{r^2}\left(\psi\left({{r}\theta\cdot x^o+\frac{\pi}{2}}\right)\sum_{k=1}^4\cos\left({r}{\a_k \cdot x^o}\right)-4
\right. \notag \\
& \left. - \psi\left({{r}\theta\cdot x^o}\right)\sum_{k=1}^4\sin\left({r}{\a_k \cdot x^o}\right)\right) dr +\frac{1}{4}\sum_{i=1}^4x_i +\frac{1}{2}\sqrt{\frac{(T-t)\pi}{2}}, \label{eq:solution}
\end{align}
where $\psi$ is the $2\pi$ periodic square wave function, $x^o$ is obtained from rearranging the coordinates of $x$ in the increasing order, and $\a_k,\theta\in \R^4$ are defined by $\a_{k,j}=\frac{3}{\sqrt{2}}\1_{\{k= j\}}-\frac{1}{\sqrt{2}}\1_{\{k\neq j\}}$, $\theta=\frac{1}{\sqrt{2}}(1,1,-1,-1)$.
We show that $u^T \in \mathcal{C}^2$, and due to this regularity, we are able to show that the balanced comb strategy and the probability matching algorithm proposed in \cite{MR3478415} are the asymptotic saddle points for the game. As noted in \cite{MR3768426}, in particular for $x=0, t=0, T=1$, the value $u^1(0,0)$ provides the expansion of the best regret as 
$$V^M(0,0)=u^1(0,0)\sqrt{M}+o(\sqrt{M})\mbox{ as }M\to \infty,$$
where $V^M$ is the value function of expert prediction problem with time maturity $M$. According to our solution \eqref{eq:solution}, we obtain the explicit value of the first order coefficient $u^1(0,0)=\frac{1}{2} \sqrt{\frac{\pi}{2} }$, which resolves the open problem in \cite{2014arXiv1409.3040G} for $N=4$; see also \cite{NIPS2017_6896}.

Prediction problem with expert advice is classical and fundamental in the field of \emph{machine learning}, and has been studied for decades. We refer the reader to  \cite{MR2409394} for a nice survey. It is a dynamic zero-sum game between a player and an adversary. At each of the $M$ rounds, based on all the prior information,  the player chooses one of the $N$ experts to follow, and simultaneously the adversary chooses a set of winning experts. The increment of the gain for each expert is either $0$ or $1$ depending on whether the expert is chosen by the adversary, and the increment of the gain of the player is that of the expert the player follows. Given a fixed maturity $M$, the objective of the player is to minimize the regret $\max\limits_{i} G_M^i - G_M$, while the adversary wants to maximize the regret, where $G_M^i$ and $G_M$ are the gain of the expert $i$ and the player, respectively.

For the case of $2$ experts,  Cover  \cite{MR0217944} showed that the asymptotically optimal strategy for the adversary is the one that chooses an expert uniformly at random. For the case of $3$ experts with geometric stopping, Gravin, Peres and Sivan \cite{MR3478415} showed that the comb strategy, which chooses the experts with the highest gain and the one with the lowest gain with probability $\frac{1}{2}$, and chooses the second leading expert with probability $\frac{1}{2}$ is asymptotically optimal for the adversary.  They also showed that  the probability matching algorithm, which consists of  following an expert with the probability that under the comb strategy that that expert will be the leading one at the end of game, is the player's asymptotically optimal response. For the case of $N=3$ experts with finite stopping, it has been shown in \cite{NIPS2017_6896} that the comb strategy is asymptotically  optimal. While both \cite{NIPS2017_6896,MR3478415} use the theory of random walk, \cite{Drenska2019} exploits the power of the PDE method. By considering a scaled game, they have shown that the value function of discrete games converges to the viscosity solution of a PDE. Following this setting, for the case of $N=4$ experts in the geometric horizon setting, Bayraktar, Ekren and Zhang \cite{2019arXiv190202368B} showed that the comb strategy is asymptotically optimal by explicitly solving the corresponding nonlinear PDE. And very recently in \cite{2019arXiv191101641K}, Kobzar, Kohn and Wang found lower and upper bounds for the optimal regret for finite stopping problem by constructing certain sub- and supersolutions of \eqref{eq:pde} following the method of \cite{DBLP:journals/corr/Rokhlin17}. Their results are only tight for $N=3$ and improved those of \cite{NIPS2017_6896}. Let us also mention the Multiplicative Weights Algorithm, which is asymptotically optimal as both $N, M \to \infty$ (see \cite{MR1470152}).

In this paper we construct an explicit solution to \eqref{eq:pde} for $N=4$ with finite stopping. We build our candidate solution based on the conjecture of \cite{MR3478415}, which states that the comb strategy is asymptotically optimal for any number of experts in both finite and geometric horizon problem. Note that if the comb strategy is asymptotically optimal, the solution to \eqref{eq:pde} should also satisfy a linear PDE with comb strategy based coefficients (see \eqref{eq:pdegeolinear}), which is shown to be true in the geometric horizon setting in \cite{2019arXiv190202368B}. The key observation is that the PDE of the finite horizon case can, at least heuristically, be obtained by applying the inverse Laplace transform to the solution of \cite{2019arXiv190202368B} extended to the complex plane. This is at a heuristic level because these linear PDEs, unlike \eqref{eq:pde}, may not have unique solution  and the analytic extension of our function to the complex plane is not well-behaved. In Appendix \ref{appendix}, we  perform this formal inverse Laplace transform and obtain the explicit expression in \eqref{eq:solution}. We show in Theorem \ref{thm:propertiesu} that \eqref{eq:solution} is the classical solution of \eqref{eq:pdegeolinear}. In Theorem \ref{thm:optimality}, we show that it also satisfies \eqref{eq:pde} by verifying that the comb strategy is optimal for the limiting problem. In Theorem \ref{thm:asyopt}, we show that the probability matching strategy for the player and the comb strategy for the adversary form an asymptotic saddle point, resolving the conjecture of \cite{MR3478415} for four experts. As a corollary, we resolve the Finite versus Geometric regret conjecture in \cite{2014arXiv1409.3040G} (see also \cite{NIPS2017_6896}); see Corollary \ref{cor:cor}. Our work reveals that the ratio of the value of two problems (which was conjectured to be $\frac{2}{\sqrt{\pi}}$) actually comes from the inverse Laplace transform; see \eqref{eq:gamma}. We also apply our method to obtain an explicit expression for $u^T$ in the 3 experts case, which was not known.

We now detail some of the difficulties in our proofs. The first main difficulty is showing that the boundary condition $u^{T}(T,x)=\Phi(x)$ is satisfied. We first write the function $u^T$ in terms of sine and cosine integral functions (see \cite{MR1225604}) and perform some intricate and long arguments from complex analysis relying on the properties of these functions. Second main difficulty is showing that the function $u^T$ actually solves the nonlinear PDE. We perform this analysis through a verification type of argument, in which we show that certain inequalities are satisfied for all $(t,x)$ and hence ruling out all the other alternative strategies for the adversary. This analysis is the most demanding part of the paper in which we rely on the properties of the Jacobi-theta function (see \cite{MR2723248}) and other properties of Fourier series. The third main difficulty is showing that the probability matching algorithm for the player and the comb strategy for the adversary form an asymptotic saddle point.  Relying on some delicate estimates, we show that the value function of discrete game converges to $u^T$ if either the player adopts the probability matching algorithm, or the adversary adopts the comb strategy.

The rest of the paper is organized as follows. In Section \ref{s:Prelim}, we introduce the problem and provide some of lemmata. In Section \ref{s:mainresults}, we state the three main results of our paper, namely Theorem \ref{thm:propertiesu}, \ref{thm:optimality} and \ref{thm:asyopt}. Here we also state the Corollary \ref{cor:cor} which resolves the ``geometric versus finite horizon conjecture" for 4 experts. In Section \ref{s.proof}, we provide all the proofs, and in Appendix \ref{appendix}, we provide a heuristic derivation of the value functions for $N=3,4$ via inverse Laplace transform. 

In the rest of this section, we will provide some frequently used notation.

\noindent {\bf{Notation.}} Denote the left hand side and the right hand side derivatives by $\pa^-, \pa^+$ respectively. Denote the number of experts by $N$, the time horizon of the discrete game by $M$, and the time horizon of the continuous time control problem by $T$ (so $M$ in our paper represents the $T$ in \cite{NIPS2017_6896,MR3478415}).  Denote by $U$ the set of probability measures on $\{1,\ldots, N\}$ and by $V$ the set of probability measures on $P(N)$, the power set of $\{1,\ldots N\}$. We denote by $\{e_i\}_{i=\{1,\ldots, N\}}$ the canonical basis of $\R^N$, and for $J\in P(N)$, $e_J$ is defined as
$e_J:=\sum_{j\in J}e_j$. For all $x\in \R^N$, we denote by $x_i$ the $i$-th coordinate of $x$, by $\{x^{(i)}\}_{i=1,\ldots, N}$ the ranked coordinates of $x$ with 
$x^{(1)}\leq x^{(2)}\leq \ldots\leq x^{(N)}$, by $\{i_1, \dotso, i_N\}$ the reordering of $\{1, \dotso, N\}$ such that $x_{i_1} \leq x_{i_2} \leq \dotso \leq x_{i_N}$ with the convention that if two components $x_i$ and $x_j$ are equal and $i<j$ then the ordering is defined to be $x_i \leq x_j$. We define $x^o:=\left(x^{(1)}, \dotso, x^{(N)} \right)$. 

\section{Preliminaries}\label{s:Prelim}

We assume that a player and an adversary are playing a zero-sum game, and they interact through the evolution of the gains of $N$ experts.  At step $m \in \N$, by $\{G_k^i\}_{k=1, \dotso, m-1}$, we denote the history of the gains of each expert $i=1, \dotso N$, and by $\{G_k\}_{k=1, \dotso, m-1}$, the history of the gains of the player. After observing all the prior history $\mathcal{G}_{m-1}:=\{(G_k^i,G_k): 1\leq i \leq N, 1 \leq k \leq m-1\}$,  simultaneously, the adversary chooses some experts $J_m \in P(N)$, and the player chooses the expert $I_m \in \{1, \dotso, N\}$ to follow. For each $i=1, \dotso, N$, the gain of expert $i$ increases by $1$ if he is chosen by the adversary, otherwise remains the same. The increment of the player's gain follows that of the expert $I_m$ he chooses. Therefore we have 
\begin{align*}
G^i_m&=G^i_{m-1}+\mathbbm{1}_{\{i \in J_m \}}, \quad \quad  i=1, \dotso, N; \\
G_m&=G_{m-1}+\mathbbm{1}_{\{I_m \in J_m \}}.
\end{align*}

In order to have a value for the game, we allow both the adversary and the player to adopt randomized strategies. At step $m \in \N$, the adversary decides on the distribution $\beta_m \in V$ to draw $J_m$ from, and independently the player decides on the distribution $\a_m \in U$ of $I_m$. Then the dynamic of $\{(G^i_m,G_m: 1 \leq i \leq N\}$ is given by 
\begin{align*}
\E^{\a_m, \b_m}[G^i_m| \mathcal{G}_{m-1}]&=G^i_{m-1}+\sum\limits_{J \in P(N)} \beta_m(J) \mathbbm{1}_{ \{i \in J\}},    \quad \quad  i=1, \dotso, N; \\
\E^{\a_m, \b_m}[G_m| \mathcal{G}_{m-1}]&=G_{m-1}+\sum\limits_{i=1}^N \sum\limits_{J \in P(N)} \a_m(i) \beta_m(J) \mathbbm{1}_{ \{i \in J\}}.
\end{align*}

Denote by $\cU$ the collection of sequences $\{\a_m\}_{ m \in \N}$ such that $\a_m$ is a function of $\mathcal{G}_{m-1}$, by $\cV$ the collection of such sequences $\{\b_m\}_{m \in \M}$. We take
\begin{align}\label{eq:X}
X_m:=(X^1_m,\ldots, X_m^N):=(G^1_m-G_m,\ldots, G^N_m-G_m),
\end{align}
the difference between the gain of the player and the experts. Define the function 
\begin{align*}
\Phi: x \mapsto \max\limits_{ 1 \leq i \leq N}x_i=x^{(N)},
\end{align*}
and the regret of the player at step $m \in \N$, 
\begin{align*}
\Phi(X_m)=\max_{i=1,\ldots,N}G^i_m-G_m.
\end{align*}

The objective of the player is to minimize his expected regret at maturity $M$ while the objective of the adversary is to maximize the regret of the player. By the Minimax theorem, the game has a solution (see \cite{MR3768426,MR3478415}), i.e., 
\begin{align}\label{eq:defVd}
\sup_{\b\in\cV}\inf_{\a\in \cU} \E^{\a,\b}\left[\Phi(X_M)|X_0=x\right]=\inf_{\a\in \cU}\sup_{\b\in\cV}  \E^{\a,\b}\left[\Phi(X_M)|X_0=x\right],
\end{align}
where $\E^{\a,\b}$ is the probability distribution under which we evaluate the regret given the controls $\a=\{\a_m\}$ and $\b=\{\b_m\}$. Therefore we can define the value function 
\begin{align*}
V^M(m,x):=\sup_{\b\in\cV}\inf_{\a\in \cU} \E^{\a,\b}\left[\Phi(X_M)|X_m=x\right]=\inf_{\a\in \cU}\sup_{\b\in\cV}  \E^{\a,\b}\left[\Phi(X_M)|X_m=x\right],
\end{align*}
which satisfies the following dynamical programming principle
\begin{align*}
& V^M(m,x)=\inf_{\a\in U} \sup_{\beta\in V}\sum_{J} \beta_J \left( V^{M}\left(m+1,x+e_J\right)-\a(J)\right).
\end{align*}

 Additionally, it was shown in \cite{MR3768426} that for any sequence $m_M\in \N$ and $x_{m_M} \in \R^4$  such that $\frac{m_M T}{M}\to t$ and $\frac{x_{m_M}\sqrt{T}}{\sqrt{M}} \to x$ as $M \to\infty$, we have that 
$$\lim\limits_{M \to \infty} \frac{{V^{M}(m_M,x_{m_M})}\sqrt{T}}{\sqrt{M}}\to u^T(t,x),$$
where $u^T(t,x)$ is the unique viscosity solution to \eqref{eq:pde}. Also, we have the Feynmann Kac representation of $u^T(t,x)$
\begin{align}\label{eq:fk}
u^T(t,x)=\sup_{\sigma} \E\left[\Phi(X^{\sigma}_T)|X_t=x\right],
\end{align}
where $X^{\sigma}$ is defined by $X_u=X_t+ \int_t^u \sigma_s dW_s$ with $W$ a $1$-dimensional Brownian motion and the progressively measurable process $(\sigma_s)$ satisfying for all $s \in [t,u]$, $\sigma_s \in \{e_J: J \in P(N) \}$.

\section{Main Results}\label{s:mainresults}
\subsection{Solution to PDE \eqref{eq:pde} with $N=4$}
Define $\a_k,\theta\in \R^4$ by $\a_{k,j}=\frac{3}{\sqrt{2}}\1_{\{k= j\}}-\frac{1}{\sqrt{2}}\1_{\{k\neq j\}}$ and 
$\theta=\frac{1}{\sqrt{2}}(1,1,-1,-1)$. 
Denote the $2\pi$ periodic square wave function by 
\begin{align*}
\psi(r):=sign\left(\tan\left(\frac{{r}}{2}\right)\right)=sign\left(\sin\left({{r}}\right)\right).
\end{align*}
Define the auxiliary function 
\begin{align*}
\Lambda(r,x):= &\left(\psi\left({{r}\theta\cdot x+\frac{\pi}{2}}\right)\sum_{k=1}^4\cos\left({r}{\a_k \cdot x}\right)-4
- \psi\left({{r}\theta\cdot x}\right)\sum_{k=1}^4\sin\left({r}{\a_k \cdot x}\right)\right),
\end{align*}
and our conjectured solution to \eqref{eq:pde}
\begin{align}\label{eq:defu}
 u^T(t,x):=\frac{-1}{16\sqrt{2}}\int_{-\infty}^{\infty} \frac{e^{-(T-t)r^2}}{r^2}\Lambda(r,x^o) dr +\frac{1}{4}\sum_{i=1}^4x_i +\frac{1}{2}\sqrt{\frac{(T-t)\pi}{2}}.
\end{align}

\begin{remark}
Due to the presence of $r^{-2}$, there is a possible integrability issue of $$\int_{-\infty}^{\infty} \frac{e^{-(T-t)r^2}}{r^2}\Lambda(r,x^o) dr.$$ However
as a result of  the fact that $\sum_{k=1}^4\a_k \cdot x^o=0$,  we have the Taylor expansion around $0$
$$\Lambda(r,x^o)=\sum_{k=1}^4 |r|\a_k \cdot x^o-\sum_{k=1}^4\frac{(r\a_k \cdot x^o)^2}{2}+o(r^2)=O(r^2).$$
Thus, $u^T(t,x)$ is well-defined. 
\end{remark}
\begin{remark}
Since the function  $\Lambda(r,x)$ is even with respect to $r$,  we sometimes use the expression
\begin{align*}
u^T(t,x)=\frac{-1}{8\sqrt{2}}\int_{0}^{\infty} \frac{e^{-(T-t)r^2}}{r^2}\Lambda(r,x^o) dr +\frac{1}{4}\sum_{i=1}^4x_i +\frac{1}{2}\sqrt{\frac{(T-t)\pi}{2}}.
\end{align*}
\end{remark}

\begin{defn}\label{def:comb}
For all $x\in \R^4$ with $x_{i_1}\leq x_{i_2}\leq x_{i_3}\leq x_{i_4}$, we denote by $\cJ_\cC(x)\in P(4)$ the comb strategy which chooses the experts $i_4$ and $i_2$. Denote $\sigma_{\cC}(X_s):=e_{\cJ_\cC(X_s)}$ to be the corresponding control of problem \eqref{eq:fk}. We take the convention that if two components $x_i$ and $x_j$ of the points are equal for $i<j$ then the ordering of the point is taken with $x_i\leq x_j$.
\end{defn}

The following theorem assembles properties of $u^T$, and its proof is provided in Section \ref{ss.proof1}.
\begin{thm}\label{thm:propertiesu}
The function $u^T$ is symmetric in $x$, satisfies $u^T\in \mathcal{C}([0,T]\times \R^4)\cap \mathcal{C}^2([0,T)\times \R^4)$ and
\begin{align}\label{eq:pdegeolinear}
\pa_t u^T(t,x)+\frac{1}{2}e_{\cJ_\cC(x)}^\top \pa^2_{xx} u^T(t,x)e_{\cJ_\cC(x)}=0&, \notag \\
u(T,x)=\max_{i=1,\dots, 4}x_i& .
\end{align}
The first derivative of $u^T$ on $\theta \cdot x^{o}<0$ is 
\begin{align}\label{1st}
\pa_{x_i}u^T(t,x)=&\frac{1}{16\sqrt{2} }\int_{-\infty}^{\infty} \frac{e^{-(T-t)r^2}}{r}\sum_{k=1}^4\a_{k,i}\left(\psi\left({{r}\theta\cdot x^{o}+\frac{\pi}{2}}\right)\sin\left({r}{\a_k \cdot x}\right) \right. \notag \\
 &\left. + \psi\left({{r}\theta\cdot x^{o}}\right)\cos\left({r}{\a_k \cdot x}\right)\right)dr+\frac{1}{4},
\end{align}
and if $\theta \cdot x^{o}=0$, it is
\begin{align}\label{1zero}
\pa_{x_i}u^T(t,x)=\frac{1}{4}.
\end{align}
If $\theta \cdot x^{o}<0$, and $x^{(2)} < x^{(3)}$, we have 
\begin{align}\label{2nd}
\pa^2_{x_ix_j}u^T(t,x)=& \frac{1}{16\sqrt{2} }\int_{-\infty}^{\infty} {e^{-(T-t)r^2}}\sum_{k=1}^4\a_{k,i}\a_{k,j}\left(\psi\left({{r}\theta\cdot x^{o}+\frac{\pi}{2}}\right)\cos\left({r}{\a_k \cdot x}\right) \right. \notag \\
& \left. - \psi\left({{r}\theta\cdot x^{o}}\right)\sin\left({r}{\a_k \cdot x}\right)\right)dr\notag\\
&+\frac{\pa_{x_j} (\theta \cdot x^{o})}{16 \sqrt{2}} \sum_{l\in \mathbb{Z}}\frac{(-1)^le^{- \frac{(T-t)(\pi (l+1/2))^2}{(\theta\cdot x^{o})^2}}}{\theta\cdot x^{o}}\sum_{k=1}^4 2 \a_{k,i}\sin \left(\frac{\a_k \cdot x \pi(l+1/2)}{\theta\cdot x^{o}}\right)\notag\\
&-\frac{\pa_{x_j} (\theta \cdot x^{o})}{16\sqrt{2}} \sum_{l\in \mathbb{Z}}\frac{(-1)^le^{- \frac{(T-t)(\pi l)^2}{(\theta\cdot x^{o})^2}}}{\theta \cdot x^{o}}\sum_{k=1}^4 2\a_{k,i}\cos \left(\frac{\a_k \cdot x \pi l}{\theta\cdot x^{o}}\right),
\end{align}
if $\theta \cdot x^{o}<0$ and $x^{(2)} = x^{(3)}$,
\begin{align}\label{3rd}
\pa^2_{x_ix_j}u^T(t,x)=& \frac{1}{16\sqrt{2} }\int_{-\infty}^{\infty} {e^{-(T-t)r^2}}\sum_{k=1}^4\a_{k,i}\a_{k,j}\left(\psi\left({{r}\theta\cdot x^{o}+\frac{\pi}{2}}\right)\cos\left({r}{\a_k \cdot x}\right) \right. \notag \\
& \left. - \psi\left({{r}\theta\cdot x^{o}}\right)\sin\left({r}{\a_k \cdot x}\right)\right)dr,  
\end{align}
and if $\theta \cdot x^{o}=0$, 
\begin{align}\label{2zero}
\pa^2_{x_ix_j}u^T(t,x)&=\frac{1}{16\sqrt{2} }\int_{-\infty}^{\infty} {e^{-(T-t)r^2}}\sum_{k=1}^4\a_{k,i}\a_{k,j}dr.
\end{align}
\end{thm}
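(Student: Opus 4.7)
Symmetry of $u^T$ in $x$ is immediate from the definition, since the integrand depends on $x$ only through the ordered vector $x^o$ and the symmetric term $\tfrac14\sum_i x_i$. I would therefore reduce throughout to the case $x=x^o$, so that $x_1\le x_2\le x_3\le x_4$ and $\theta\cdot x\le 0$. On $(0,T)\times\R^4$ the Gaussian factor $e^{-(T-t)r^2}$ provides uniform exponential decay in $r$, so dominated convergence justifies differentiating under the integral in both $t$ and $x$; the only subtlety is the piecewise-constant factor $\psi$.

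Differentiating $\psi(r\theta\cdot x^o+c)$ in $x_j$ formally produces Dirac masses at $r=\ell\pi/(\theta\cdot x^o)$ and $r=(\ell+\tfrac12)\pi/(\theta\cdot x^o)$. For the \emph{first} $x$-derivative these delta contributions cancel, yielding \eqref{1st}: the identity $r[(\a_1-\a_4)+(\a_2-\a_3)]\cdot x^o=4r\theta\cdot x^o$ combined with $\sum_k\a_k=0$ forces $\sum_k\sin(r\a_k\cdot x^o)=0$ at $r\theta\cdot x^o=\ell\pi$ via a sum-to-product manipulation, and an analogous computation gives $\sum_k\cos(r\a_k\cdot x^o)=0$ at $r\theta\cdot x^o+\pi/2=\ell\pi$. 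Differentiating once more, the delta contributions no longer cancel; evaluating them using $\psi'(y)=2\sum_\ell(-1)^\ell\delta(y-\ell\pi)$ (equivalently, Poisson summation) produces the Jacobi-theta series displayed in \eqref{2nd}. Substituting $\psi(0)=0$ and $\psi(\pi/2)=1$ on $\{\theta\cdot x^o=0\}$ gives \eqref{1zero}, \eqref{2zero}. The reduction to \eqref{3rd} on $\{x^{(2)}=x^{(3)}\}$ is the identity $\a_1+\a_2=2\theta$, which implies $r_1+r_2=2$ with $r_k:=\a_k\cdot x^o/(\theta\cdot x^o)$; combined with $r_2=r_3$ this makes the arguments $r_k\pi(l+\tfrac12)$ all congruent mod $\pi$ across $k$, so the Jacobi-theta sums collapse via $\sum_k\a_{k,i}=0$. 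Checking that all these expressions agree across the singular strata $\{\theta\cdot x^o=0\}$, $\{x^{(2)}=x^{(3)}\}$, and their intersection establishes $u^T\in\mathcal{C}^2([0,T)\times\R^4)$.

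Verifying the PDE \eqref{eq:pdegeolinear} is then a direct computation. Using $\pa_t e^{-(T-t)r^2}=r^2 e^{-(T-t)r^2}$ cancels $r^{-2}$ in \eqref{eq:defu}, and the constant $-4$ inside $\Lambda$ exactly compensates the $\pa_t$ of $\tfrac12\sqrt{(T-t)\pi/2}$, yielding a clean single-integral formula for $\pa_t u^T$. For sorted $x$, $\cJ_\cC(x)=\{2,4\}$ and the quadratic term is $\pa^2_{x_2 x_2}+2\pa^2_{x_2 x_4}+\pa^2_{x_4 x_4}$. Two algebraic facts make this match $-2\pa_t u^T$: first, the Jacobi-theta contributions in \eqref{2nd} carry a factor $\pa_{x_j}(\theta\cdot x^o)$, and $\pa_{x_2}(\theta\cdot x^o)+\pa_{x_4}(\theta\cdot x^o)=\tfrac{1}{\sqrt 2}-\tfrac{1}{\sqrt 2}=0$, eliminating those series entirely; second, in the surviving Gaussian integral $(\a_{k,2}+\a_{k,4})^2=2$ for every $k\in\{1,2,3,4\}$, producing exactly the coefficient required. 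The PDE on the degenerate strata then follows by continuity.

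The hardest step, which I would save for last, is continuity at $t=T$ with $u^T(T,x)=\max_i x_i$. At $t=T$ the correction term $\tfrac12\sqrt{(T-t)\pi/2}$ vanishes, so the claim reduces, for sorted $x$, to the integral identity
\begin{equation*}
-\frac{1}{16\sqrt{2}}\int_{-\infty}^{\infty}\frac{\Lambda(r,x^o)}{r^2}\,dr \;=\; x_4-\tfrac14\sum_{i=1}^4 x_i,
\end{equation*}
the integral being absolutely convergent because $\sum_k\a_k=0$ forces $\Lambda(r,x^o)=O(r^2)$ near $r=0$. I would prove this by expanding $\psi$ in its Fourier series $\psi(y)=\tfrac{4}{\pi}\sum_{n\ge 0}\sin((2n+1)y)/(2n+1)$, thereby reducing the left-hand side to a double series of elementary integrals of the form $\int[\cos(ar)-1]/r^2\,dr$ and $\int\sin(ar)/r^2\,dr$, recognizing these as values of sine- and cosine-integral functions, and then resumming. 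As the authors emphasize in the introduction, this complex-analytic manipulation is the main technical obstacle of the proof; once in hand, continuity of $u^T$ up to $t=T$ follows from dominated convergence in \eqref{eq:defu}.
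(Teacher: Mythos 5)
Your plan tracks the paper's proof for the symmetry, smoothness, and PDE-verification parts, and the two algebraic facts you invoke for the PDE (that $\pa_{x_2}(\theta\cdot x^o)+\pa_{x_4}(\theta\cdot x^o)=0$ kills the theta-series terms, and that $(\a_{k,2}+\a_{k,4})^2=2$ for every $k$) are exactly the ones the paper uses. Two points, however, need attention.

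First, the mechanism you give for \eqref{3rd} is not correct. You assert that with $r_k:=\a_k\cdot x^o/(\theta\cdot x^o)$ the arguments $r_k\pi(l+\tfrac12)$ become ``all congruent mod $\pi$'' and the series ``collapse via $\sum_k\a_{k,i}=0$.'' They are only pairwise congruent: $r_1\pi(l+\tfrac12)\equiv r_4\pi(l+\tfrac12)\pmod{2\pi}$ (since $r_4-r_1=4$) and $r_2=r_3$; but $r_1\pi(l+\tfrac12)$ and $r_2\pi(l+\tfrac12)$ are \emph{not} congruent. The sum $\sum_k\a_{k,i}\sin(r_k\pi(l+\tfrac12))$ therefore does not vanish simply because $\sum_k\a_{k,i}=0$. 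What actually happens is that the pairing reduces it to $\pm\sqrt2\bigl(\sin(r_1\pi(l+\tfrac12))-\sin(r_2\pi(l+\tfrac12))\bigr)$, and a sum-to-product identity together with $r_1+r_2=2$ produces a factor $\cos(\pi(l+\tfrac12))=0$; the analogous computation for the cosine series yields a factor $\sin(\pi l)=0$. Without this factorization the reduction to \eqref{3rd} is not justified.

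Second, and more seriously, the terminal-condition verification is left as a gesture. You propose expanding $\psi$ in its Fourier sine series, reducing to elementary $r^{-2}$-weighted integrals, ``recognizing these as values of sine- and cosine-integral functions, and then resumming.'' The ``resumming'' is precisely the crux: after the piecewise integration one is left with alternating series of the form $\sum_{n\ge1}(-1)^{n+1}si\bigl((2n-1)R_k\bigr)$ and $\sum_{n\ge1}(-1)^{n+1}Ci(2nR_k)$ with $R_k=|A_kT_0|$, and evaluating these requires a genuine contour-integration argument whose structure depends on whether $R_k<\pi/2$ (the cases $k=2,3$) or $R_k\in(\pi/2,3\pi/2)$ (the cases $k=1,4$), because additional poles at $\pm\pi/2$ then lie inside the contour and contribute residue terms; it is exactly these extra residues that assemble into $-4A_4$ and hence $\Phi(x)$. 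Your plan contains none of this, so as written the hardest and most essential step of the theorem — that $u^T(T,\cdot)=\Phi$ — is not established. You would need to either carry out this contour analysis or find a genuinely different way to close the double series you produce after the Fourier expansion.
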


The proof of the following theorem is in Section \ref{ss:proof2}.
\begin{thm}\label{thm:optimality}
The function $u^T$ defined in \eqref{eq:defu} is also a solution to \eqref{eq:pde} and the comb strategy $e_{\cJ_\cC}$ is optimal for the problem \eqref{eq:pde}.
\end{thm}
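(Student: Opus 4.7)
The plan is to leverage Theorem \ref{thm:propertiesu}, which already establishes that $u^T\in\mathcal{C}([0,T]\times\R^4)\cap\mathcal{C}^2([0,T)\times\R^4)$ and solves the linear equation \eqref{eq:pdegeolinear} with the comb coefficients and the correct terminal datum $\Phi$. Consequently, in order to promote $u^T$ to a classical (hence, by the uniqueness result of \cite{MR3768426,Drenska2019}, the viscosity) solution of the nonlinear PDE \eqref{eq:pde}, it suffices to verify the pointwise optimality identity
\begin{align*}
e_{\cJ_\cC(x)}^\top \pa^2_{xx} u^T(t,x)\,e_{\cJ_\cC(x)}\;=\;\sup_{J\in P(4)} e_J^\top \pa^2_{xx} u^T(t,x)\,e_J,\qquad (t,x)\in[0,T)\times\R^4.
\end{align*}
Once this identity is in hand, the optimality of the comb control $\sigma_\cC$ in the Feynman--Kac representation \eqref{eq:fk} follows from a standard \Ito verification: since $u^T\in\mathcal{C}^2$, applying \Ito's formula to $u^T(s,X_s^{\sigma_\cC})$ on $[t,T)$ produces a martingale, while for any other admissible control $\sigma$ the process $u^T(s,X_s^\sigma)$ is only a supermartingale, and taking expectations between $t$ and $T$ yields $u^T(t,x)=\E[\Phi(X_T^{\sigma_\cC})|X_t=x]\ge\E[\Phi(X_T^\sigma)|X_t=x]$.

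Two structural reductions collapse the supremum identity to a small number of scalar inequalities. First, the permutation symmetry of $u^T$ in $x$ from Theorem \ref{thm:propertiesu} lets me assume $x=x^o$, so that $\cJ_\cC(x)=\{2,4\}$. Second, the identity $\sum_{j=1}^4\a_{k,j}=0$ for each $k$, combined with \eqref{eq:defu}, yields the shift relation $u^T(t,x+c\,e_{\{1,2,3,4\}})=u^T(t,x)+c$ for every $c\in\R$; differentiating twice gives $He_{\{1,2,3,4\}}=0$ with $H:=\pa^2_{xx}u^T(t,x)$, and writing $e_{J^c}=e_{\{1,2,3,4\}}-e_J$ then shows $e_{J^c}^\top H e_{J^c}=e_J^\top H e_J$. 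Thus only $J$ with $|J|\le 2$ need be considered; discarding the trivial cases $|J|\in\{0,4\}$ and using complementation, the candidates reduce to the four singletons $\{i\}$ and the three doubletons $\{1,2\},\{1,3\},\{1,4\}$. Of these, $\{1,3\}$ is the complement of the comb $\{2,4\}$ and gives automatic equality, leaving six genuine inequalities to establish.

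The next step is to substitute the explicit formulas \eqref{2nd}--\eqref{2zero} into these six comparisons. Each quadratic form $e_J^\top H e_J$ is then a linear combination, with coefficients given by the elementary sums $\sum_{k=1}^4\a_{k,i}\a_{k,j}$ inherited from the definition of $\a_k$, of a symmetric Gaussian integral over $r\in\R$ against the square waves $\psi(r\theta\cdot x^o+\pi/2)$ and $\psi(r\theta\cdot x^o)$, plus, in the generic case $\theta\cdot x^o<0$ and $x^{(2)}<x^{(3)}$, two Jacobi-theta-type Poisson series indexed by $l\in\Z$ carrying the decay factors $e^{-(T-t)(\pi l)^2/(\theta\cdot x^o)^2}$ and $e^{-(T-t)(\pi(l+1/2))^2/(\theta\cdot x^o)^2}$. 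The goal is to rewrite each difference $e_{\{2,4\}}^\top H e_{\{2,4\}}-e_J^\top H e_J$ as a single Fourier-type expression whose non-negativity can be read off term by term.

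The main obstacle will be controlling the non-Gaussian theta contributions, which oscillate in $l$ and do not split into manifestly sign-definite pieces. I anticipate invoking the Jacobi imaginary transformation of the theta function to recast each such series in a dual form where positivity becomes visible, and combining this with monotonicity of the Gaussian integrals in $(T-t)$ and $|\theta\cdot x^o|$. A natural strategy is to start with the degenerate configurations $\theta\cdot x^o=0$ (using \eqref{2zero}) and $x^{(2)}=x^{(3)}$ (using \eqref{3rd}), where the theta series drop out and the comparison collapses to a pure Gaussian inequality verifiable by direct integration against the trigonometric coefficients; then the generic regime is recovered either by continuity of both sides in $x$ or by an explicit term-by-term Fourier-positivity argument. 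Throughout, the identity $\sum_{k=1}^4 \a_k\cdot x^o=0$ together with the structural property $|\theta\cdot x^o|=\tfrac{1}{\sqrt 2}|(x^{(1)}+x^{(2)})-(x^{(3)}+x^{(4)})|$ should provide the geometric grip needed to single out $\{2,4\}$ as the optimal set.
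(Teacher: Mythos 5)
Your reduction is structurally correct up to a point: the Itô verification argument, the use of permutation symmetry to normalize $x=x^o$, and the observation that $He_{\{1,2,3,4\}}=0$ (so $e_{J^c}^\top H e_{J^c}=e_J^\top H e_J$) are all sound and mirror the paper's Section~\ref{ss:proof2}. However, there are two genuine problems in the proposed completion.

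First, you discard $J=\emptyset$ (equivalently $J=\{1,2,3,4\}$) as ``trivial,'' but it is not. For $J=\emptyset$ we have $e_J=0$, so the required inequality $e_\emptyset^\top H e_\emptyset \le e_{\cJ_\cC}^\top H e_{\cJ_\cC}$ reads $0\le e_{\cJ_\cC}^\top H e_{\cJ_\cC}=-2\,\pa_t u^T$, i.e.\ $\pa_t u^T(t,x)\le 0$. There is no a priori reason this holds; it is a substantive inequality about the constructed function, and the paper devotes an entire step (Step~2, via Poisson summation applied to $\sum_k S_k$) to establishing it. Your enumeration therefore misses one of the three essentially distinct comparisons that must be verified.

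Second, the continuity route you sketch goes in the wrong direction. You propose proving the inequalities first at the degenerate loci $\theta\cdot x^o=0$ or $x^{(2)}=x^{(3)}$ (where the theta series vanish) and then ``recovering the generic regime by continuity of both sides in $x$.'' Continuity lets you extend a non-strict inequality from a dense set to its closure; it does not propagate an inequality from a codimension-one submanifold into an open region. The paper in fact argues in the opposite direction: since $\pa^2_{xx}u^T$ is continuous, it \emph{suffices} to establish the inequalities on the generic open set $\{\theta\cdot x^o<0,\; x^{(2)}<x^{(3)}\}$, and the degenerate cases follow by taking limits. Your alternative suggestion (a term-by-term Fourier-positivity argument in the generic case) is the right instinct, but as stated it is a placeholder: the actual work in the paper consists of the infinite-product representation of $\theta_3$ for $J=\{i_1,i_2\}$, Poisson summation of the Gaussian/wave integrals for $J=\emptyset$, and a chain of monotonicity inequalities $S_1\le S_2\le S_4$, $S_3\le S_4$ together with $L_1\le L_4$ for the singleton cases. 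None of that is derivable from the degenerate-limit analysis alone.
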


\subsection{An asymptotical Nash equilibrium for the game \eqref{eq:defVd} with $N=4$}

Given the value of $u^T$, we now describe a family of asymptotically optimal strategies for both the player and the adversary. 
Inspired by \cite{MR3478415} we give the following definition. 
\begin{defn}\label{def:controls}
(i)For $M \in \N$, we denote  by $\cJ^b_\cC(M)\in \cV$, the balanced comb strategy, which at state ${x}\in \R^4$ and round $m \in \N$, chooses experts $\cJ_\cC\left( x \right)\in P(4)$ with probability $\frac{1}{2}$ and $\cJ^c_\cC\left(x \right)\in P(4)$ with probability $\frac{1}{2}$.
 
 (ii)For $M \in \N$, we denote by $\a^*(M)\in \cU$, the strategy that, at state ${x}\in \R^4$ and round $m \in \N$, chooses the expert $i$ with probability $\pa_{x_i}u^T\left(\frac{mT}{M},\frac{x\sqrt{T}}{\sqrt{M}}\right)$ for all $i=1,\dots, 4$. \end{defn}

\begin{remark}
Note that Definition \ref{def:comb} defines a control for the problem \eqref{eq:fk} while Definition \ref{def:controls} defines controls for the game \eqref{eq:defVd}. Hence the latter depends on $M,T$ and $x$,  and the control $\a^*(M)$ actually reflects the scaling between the two problems (see \cite{Drenska2019} for details).
\end{remark}

\begin{remark}
According to the Feynmann Kac representation \eqref{eq:fk} and Theorem \ref{thm:propertiesu}, we have $$u^T(t,x)=\E\left[\Phi\left(x + \int_t^T \sigma_{\cC}(X_s) dW_s \right) \right].$$
Then heuristically 
\begin{align*}
\pa_{x_i} u^T(t,x)=\E\left[\pa_{x_i}\Phi\left(x + \int_t^T\sigma_{\cC}(X_s) dW_s \right) \right]=\mathbb{P}\left[(X_T^{\sigma_{\cC}})_i=(X_T^{\sigma_{\cC}})^{(4)} | X_t=x\right],
\end{align*}
which is just the probability matching algorithm proposed in \cite{MR3478415}.
\end{remark}

\begin{defn}
Define the following two value functions
\begin{align*}
\underline V^M(m,.)&: x \mapsto\inf_{\a\in \cU}\E^{\a,\cJ^b_\cC(M)}\left[{\Phi(X_M)}|X_m=x\right],\\
\overline V^M(m,.)&:x \mapsto \sup_{\b\in \cV}\E^{\a^*(M),\b}\left[{\Phi(X_M)}|X_m=x\right],
\end{align*}
and their limits 
\begin{align*}
\underline u^T(t,x):=\liminf\limits_{(M,\frac{m_M T}{N},\frac{x_{m_M} \sqrt{T}}{\sqrt{M}} )\to (\infty,t,x)}  \frac{\underline V^M\left(m_M,x_{m_M} \right) \sqrt{T}}{\sqrt{M}}, \\
\overline u^T(t,x):=\limsup\limits_{(M,\frac{m_M T}{M},\frac{x_{m_M} \sqrt{T}}{\sqrt{M}}) \to (\infty,t,x)}   \frac{ \overline V^M(m_M,x_{m_M}) \sqrt{T}}{\sqrt{M}}.
\end{align*}
\end{defn}

The proof the following theorem can be found in Section \ref{ss:asymptotic}.
\begin{thm}\label{thm:asyopt}
The family of strategies $(\a^*(M))_{M\in \N}\in \cU^\N$ and $(\cJ^b_\cC(M))_{M\in \N}\in \cV^\N$ are asymptotic saddle points for the player and the adversary,  in the sense that for all $(t,x)\in[0,T]\times \R^4$
\begin{align*}
\underline u^T(t,x)= \overline u^T(t,x)=u^T(t,x).
\end{align*}
\end{thm}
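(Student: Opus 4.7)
The plan is to exploit the two characterizations of $u^T$ established in Theorems \ref{thm:propertiesu} and \ref{thm:optimality}, namely that $u^T$ solves both the linear PDE \eqref{eq:pdegeolinear} with the comb coefficients and the nonlinear PDE \eqref{eq:pde} in which the comb attains the supremum, and to run a discrete Itô/Taylor expansion for $U^M_m:=u^T\bigl(\tfrac{mT}{M},X_m\sqrt{T/M}\bigr)$ along the two distinguished strategies. Since the general minimax value satisfies $\underline V^M\leq V^M\leq \overline V^M$ and $V^M\sqrt{T/M}\to u^T$ by \cite{MR3768426}, the two nontrivial inequalities to establish are $\overline u^T\leq u^T$ and $\underline u^T\geq u^T$.

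Before expanding I record two structural facts that drive the argument. First, because $\Phi(x+c\mathbf{1})=\Phi(x)+c$, the translation invariance $u^T(t,x+c\mathbf{1})=u^T(t,x)+c$ propagates through \eqref{eq:fk}, so $\mathbf{1}^\top \partial_x u^T\equiv 1$ and $\partial^2_{xx}u^T\,\mathbf{1}=0$. In particular, for every $J$, $e_{J^c}^\top H e_{J^c}=e_J^\top H e_J$ where $H=\partial^2_{xx}u^T$, and $(X_{m+1}-X_m)=e_{J_{m+1}}-\mathbf{1}_{\{I_{m+1}\in J_{m+1}\}}\mathbf{1}$ yields $(X_{m+1}-X_m)^\top H(X_{m+1}-X_m)=e_{J_{m+1}}^\top H e_{J_{m+1}}$. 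Second, under the balanced comb $\mathcal J^b_{\mathcal C}(M)$, each expert $i$ appears in $J_{m+1}$ with marginal probability $\tfrac12$, so $\mathbb E[X^i_{m+1}-X^i_m\mid\mathcal G_m]=\tfrac12-\tfrac12=0$ for every player response; and under $\alpha^*(i)=\partial_{x_i}u^T$ we have $\sum_{i\in J}\alpha^*(i)=\sum_{i\in J}\partial_{x_i}u^T$, so the first-order drift $\sum_i\partial_{x_i}u^T\bigl(\mathbf{1}_{\{i\in J\}}-\sum_{j\in J}\alpha^*(j)\bigr)=\sum_{i\in J}\partial_{x_i}u^T-\sum_{j\in J}\alpha^*(j)$ also vanishes for every adversary response.

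Now for the Taylor expansion: on $[0,T-\eta]\times\mathbb R^4$ (with a small buffer $\eta>0$) Theorem \ref{thm:propertiesu} gives $u^T\in\mathcal C^2$ with locally bounded derivatives, and the piecewise formulas \eqref{2nd}-\eqref{2zero} show that $\partial^2_{xx}u^T$ is Lipschitz on this slab (the denominators $\theta\cdot x^o$ are tamed by the Gaussian factor $e^{-(T-t)\pi^2(l+1/2)^2/(\theta\cdot x^o)^2}$). A second-order expansion of $U^M_{m+1}-U^M_m$ produces a remainder $R_m$ of order $(T/M)^{3/2}$ per step, uniformly in $m\leq M(1-\eta/T)$. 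In the $\overline u^T$ case, combining the vanishing drift with the nonlinear PDE gives
\begin{align*}
\mathbb E[U^M_{m+1}-U^M_m\mid\mathcal G_m]\leq \tfrac{T}{M}\bigl(\partial_t u^T+\tfrac12 e_{J_{m+1}}^\top H e_{J_{m+1}}\bigr)+\mathbb E[R_m\mid\mathcal G_m]\leq \mathbb E[R_m\mid\mathcal G_m],
\end{align*}
so $(U^M_m)$ is a near-supermartingale; in the $\underline u^T$ case, the linear PDE \eqref{eq:pdegeolinear} gives equality in place of the inequality, so $(U^M_m)$ is a near-martingale. Telescoping and taking expectations yields $\mathbb E[U^M_{m^*}]\lessgtr U^M_0+O(\sqrt{T/M})$ at the cutoff round $m^*=\lfloor M(1-\eta/T)\rfloor$.

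The remaining and most delicate step is the boundary layer near $t=T$, where $u^T$ loses smoothness. The plan is to show that the additional regret accumulated in the last $\eta M/T$ rounds is $O(\sqrt\eta)$ uniformly, using the continuity of $u^T$ at $t=T$ combined with the trivial bounds $|\Phi(X_M)-\Phi(X_{m^*})|\leq \|X_M-X_{m^*}\|_\infty\leq M-m^*$ after rescaling by $\sqrt{T/M}$ and a variance estimate, so that letting $\eta\to0$ after $M\to\infty$ removes the buffer. Combining the two directions and using $\underline u^T\leq u^T\leq\overline u^T$ gives the claimed equalities. The main obstacle will be uniformly controlling the Taylor remainder $\sum_m\mathbb E[R_m]$ in terms of $\partial^3 u^T$ (or a modulus of continuity of $H$) across the region where \eqref{2nd}, \eqref{3rd} and \eqref{2zero} meet; this requires careful case analysis of the piecewise formulas in Theorem \ref{thm:propertiesu}, together with the uniform Gaussian decay in the $\theta\cdot x^o$ variable, to guarantee that the constants appearing in the remainder are integrable in the limit $M\to\infty$.
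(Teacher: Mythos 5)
Your overall architecture agrees with the paper's: the first-order drift cancels under $\alpha^*$ (respectively under the balanced comb) because $\mathbf{1}^\top\partial_x u^T\equiv 1$, the second-order term is controlled by the PDEs, and one telescopes to get near-sub/supermartingales. The $\overline u^T\leq u^T$ direction in your proposal is essentially sound, since the nonlinear PDE gives $\partial_t u^T+\tfrac12 e_J^\top H e_J\leq 0$ for \emph{every} $J$ at every intermediate point along the step, so there is no matching issue between the adversary's choice and the comb.

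The gap is in the $\underline u^T\geq u^T$ direction, and it is exactly where the hard work of the paper's proof lives. You write that ``the linear PDE \eqref{eq:pdegeolinear} gives equality in place of the inequality.'' But the balanced comb fixes $J_m=\cJ_\cC(\tilde x_{m-1})$ at the \emph{start} of the step, while the integral Taylor remainder evaluates $\partial_t u^T+\tfrac12 e_{J_m}^\top H e_{J_m}$ at intermediate points $\tilde x_{m-1}+s\Delta X_m$. If the coordinate ordering changes during the step --- which happens whenever $\tilde x_{m-1}^{(3)}<\tilde x_{m-1}^{(2)}+\sqrt{T/M}$ --- then for $s$ beyond the crossover $s_0$ the index set $J_m$ no longer corresponds to $\cJ_\cC(\tilde x_s)$ but rather to the \emph{top two} experts, and by Theorem \ref{thm:optimality} that is a strictly \emph{suboptimal} choice, i.e.\ the expression is in general $<0$, not $=0$. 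So the near-martingale claim fails without further argument. Your fallback plan --- to control the discrepancy via a Lipschitz bound on $H=\partial^2_{xx}u^T$ or via $\partial^3 u^T$ --- is also not justified: from \eqref{2nd} the Hessian contains the term $\pa_{x_j}(\theta\cdot x^o)$, which is a piecewise constant function of the ordering, so $u^T$ is $\mathcal C^2$ but not obviously $\mathcal C^3$ across the diagonal $x^{(2)}=x^{(3)}$ (the paper only establishes matching of the second derivative there, via \eqref{3rd}), and a Lipschitz modulus for $H$ uniformly in the slab is not established. The paper instead proves the sharp one-sided bound \eqref{eq:lowerbound}, $\partial_t u^T+\tfrac12 e_{J_m}^\top H e_{J_m}(t_{m-1},\tilde x_s)\geq -Cs/(T-t_{m-1})$, by writing the mismatched quantity as a difference of Jacobi theta values $\theta_3(\nu_s,\hat\tau_s)-\theta_3(\mu_s,\hat\tau_s)$, exploiting the $\pi$-periodicity $\theta_3(z+\pi,\tau)=\theta_3(z,\tau)$ to see the two coincide at the crossover $s_0$, and then bounding the incremental growth by the series estimate $\sup_\lambda\sum_n n\lambda e^{-n^2\lambda}<\infty$. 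This theta-function argument is the key idea your proposal is missing.

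A secondary remark: your buffer-near-$t=T$ device is a legitimate alternative to what the paper does, but note that the paper avoids it entirely by proving the direct integrable-singularity estimates $|\partial^2_{tt}u^T|\leq C(T-t)^{-3/2}$ and $|\partial^2_{tx}u^T|\leq C(T-t)^{-1}$ and then summing the resulting per-step errors explicitly in \eqref{eq:findalestimate}; this is cleaner because it sidesteps the need for a uniform $O(\sqrt\eta)$ tail bound which would itself require some care given that $\Phi(X_M)-\Phi(X_{m^*})$ is not small pointwise.
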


It can be easily seen that $\underline u^T(t,x) \leq u^T(t,x) \leq \overline u^T(t,x)$, and our main result states that they are actually equal, which implies that at the leading order it is optimal for both the player and the adversary to choose respectively the controls $\a^*(M)$ and $\cJ_\cC^b(M)$, i.e., for any 
$\a_M\in \cU$, $\beta_M \in \cV$ and $T>0$, we have that 
$$\liminf_{M \to \infty}\sqrt{\frac{T}{M} }\left(\E^{\a_M,\cJ^b_\cC(M)}\left[{\Phi(X_M)} \left|X_0=\frac{\sqrt{M}x}{\sqrt{T}}\right.\right]- \E^{\a^*(M),\cJ^b_\cC(M)}\left[{\Phi(X_M)} \left|X_0=\frac{\sqrt{M}x}{\sqrt{T}}\right.\right]\right)\geq 0,$$
\begin{align*}
\limsup_{M \to \infty}\sqrt{\frac{T}{M} }\left( \E^{\a^*(M),\b_M}\left[{\Phi(X_M)} \left|X_0=\frac{\sqrt{M}x}{\sqrt{T}}\right.\right]- \E^{\a^*(M),\cJ^b_\cC(M)}\left[{\Phi(X_M)} \left|X_0=\frac{\sqrt{M}x}{\sqrt{T}}\right.\right]\right)\leq 0.
\end{align*}

\subsection{Relation between the finite and geometric stopping}\label{ss:FvsG}
We recall the following results from \cite{2019arXiv190202368B} and \cite{Drenska2019}.
Let $T^\d$ be a geometric random variable with parameter $\d>0$. Define 
\begin{align*}
V^\d(X_0):=\sup_{\b\in\cV}\inf_{\a\in \cU} \E^{\a,\b}\left[\Phi(X_{T^\d})\right]=\inf_{\a\in \cU}\sup_{\b\in\cV}  \E^{\a,\b}\left[\Phi(X_{T^\d})\right],
\end{align*}
and
\begin{align*}
u^\d:x\in \R^N\mapsto V^\d\left(\frac{x}{\sqrt{\d}}\right)\sqrt{\d}.
\end{align*}
so that as $\d\downarrow 0$, the function $u^\d$ converges locally uniformly to $u:\R^N\mapsto \R$ which is the unique viscosity solution of the equation
\begin{align}\label{eq:pdegeo2}
u(x)-\frac{1}{2}\sup_{J\in P(N)} e_J^\top \pa^2 u(x)e_J= \Phi(x).
\end{align}

The main conjecture in \cite{MR3478415} regarding the relation between the finite and geometric horizon control problems is that 
\begin{align*}V^{M}(0,0)\underset{M \to+\infty}{\sim}\frac{2}{\sqrt{\pi}}V^\frac{1}{M}(0).
\end{align*}
The corollary below shows that this statement is true for $N=3,4$.

\begin{cor}\label{cor:cor}
For $N=3,4$, we have the limit $$\lim\limits_{M \to \infty} \frac{V^M(0,0)}{V^{\frac{1}{M}}(0)}=\frac{2}{\sqrt{\pi}}.$$
\end{cor}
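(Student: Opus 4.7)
The plan is to take the $\sqrt{M}$-renormalized limits of $V^M(0,0)$ and $V^{1/M}(0)$ separately, identify them with $u^1(0,0)$ and $u(0)$ respectively, and then evaluate the ratio via a clean Brownian-scaling and Laplace-transform argument.

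First, I would apply Theorem \ref{thm:asyopt} with $T=1$, $(t,x)=(0,0)$ and the constant sequences $m_M=x_{m_M}=0$ to obtain $V^M(0,0)/\sqrt{M}\to u^1(0,0)$; the analogous convergence for $N=3$ is contained in \cite{NIPS2017_6896,MR3768426}. Specializing $u^\delta(x)=V^\delta(x/\sqrt{\delta})\sqrt{\delta}$ to $x=0$ and $\delta=1/M$ and using the convergence $u^\delta\to u$ of \cite{Drenska2019,2019arXiv190202368B} similarly gives $V^{1/M}(0)/\sqrt{M}=u^{1/M}(0)\to u(0)$. The corollary therefore reduces to the purely analytic identity $u^1(0,0)/u(0)=2/\sqrt{\pi}$.

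For this identity I would follow the Laplace-transform viewpoint flagged in the introduction. Brownian scaling in the Feynman--Kac representation \eqref{eq:fk} (or, for $N=4$, direct substitution $x=0$ in \eqref{eq:defu}, where $\a_k\cdot 0=\theta\cdot 0=0$ forces $\Lambda(r,0)=\psi(\pi/2)\cdot 4-4=0$ and leaves $u^T(0,0)=\tfrac{1}{2}\sqrt{T\pi/2}$) produces the scaling $u^T(0,0)=\sqrt{T}\,u^1(0,0)$. Since the comb diffusion $\sigma_\cC$ is optimal both for the finite-horizon PDE (Theorem \ref{thm:optimality}, and \cite{NIPS2017_6896,MR3478415} for $N=3$) and for the geometric problem (\cite{2019arXiv190202368B} for $N=4$, \cite{MR3478415} for $N=3$), representing $u(0)$ via Feynman--Kac with $\sigma_\cC$ and an independent exponential time $\tau\sim\mathrm{Exp}(1)$ and conditioning on $\tau$ gives
\begin{align*}
u(0)=\E\!\left[\Phi\!\left(X^{\sigma_\cC}_\tau\right)\mid X_0=0\right]=\int_0^\infty e^{-t}u^t(0,0)\,dt=u^1(0,0)\int_0^\infty e^{-t}\sqrt{t}\,dt=\tfrac{\sqrt{\pi}}{2}\,u^1(0,0),
\end{align*}
using $\Gamma(3/2)=\sqrt{\pi}/2$; hence $u^1(0,0)/u(0)=2/\sqrt{\pi}$ and the corollary follows.

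The only step that deserves a moment of care is the Laplace identity $u(0)=\int_0^\infty e^{-t}u^t(0,0)\,dt$: one must simultaneously justify the Feynman--Kac representation with the common comb control in both the finite- and geometric-horizon problems and the independence of $\tau$ from the driving Brownian motion. Both ingredients are already secured by the optimality results cited above, so no genuine obstacle arises and the corollary is essentially a repackaging of Theorem \ref{thm:asyopt} with the classical identity $\Gamma(3/2)=\sqrt{\pi}/2$ — which is exactly the sense in which ``the ratio comes from the inverse Laplace transform'' mentioned in the introduction.
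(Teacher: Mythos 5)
Your proposal is correct and reaches the same analytic identity $u^1(0,0)/u(0)=2/\sqrt{\pi}$, but it gets there by a genuinely different and arguably more conceptual route than the paper's own proof. The paper's proof is a straight computation: it plugs $T=1$, $t=0$, $x=0$ into the closed-form parabolic solution and into the closed-form elliptic solution from \cite{2019arXiv190202368B} (resp.\ \cite{Drenska2019} for $N=3$), obtains explicit numbers for both $u^1(0,0)$ and $u(0)$, and divides. You instead avoid evaluating $u(0)$ at all: you note $\Lambda(r,0)=0$ so $u^T(0,0)=\tfrac12\sqrt{T\pi/2}$, extract the scaling $u^T(0,0)=\sqrt{T}\,u^1(0,0)$, and then close the gap by the Laplace-transform identity $u(0)=\int_0^\infty e^{-t}u^t(0,0)\,dt$ coming from Feynman--Kac with the comb diffusion and an independent $\mathrm{Exp}(1)$ time, which hands you $\Gamma(3/2)=\sqrt{\pi}/2$ and the constant $2/\sqrt{\pi}$. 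This is exactly the mechanism the paper singles out in Proposition \ref{conject} and the remark containing \eqref{eq:gamma}, which the paper keeps \emph{separate} from the proof of Corollary \ref{cor:cor}; in effect you are proving the corollary as a direct consequence of (a self-contained re-derivation of) Proposition \ref{conject} rather than from the two explicit formulas. The trade-off is that your route requires verifying the hypotheses needed for the Feynman--Kac/resolvent argument in both the finite- and geometric-horizon problems for $N=3$ and $N=4$ (optimality of the comb diffusion, enough regularity and integrability to justify conditioning on $\tau$ and interchanging limits) — you flag this and point to the right citations, but this step is genuinely nontrivial for $N=3$ where the present paper does not prove the finite-horizon optimality itself. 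The paper's approach bypasses all of that by using the already-verified explicit solutions; yours is shorter and shows \emph{why} the constant is $2/\sqrt{\pi}$, at the cost of leaning more heavily on external optimality results.
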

\begin{proof}
According to Theorem \ref{thm:optimality} and Proposition \ref{prop:solution}, \eqref{eq:defu} and \eqref{eq:parobalic3} are solutions to \eqref{eq:pde} with $N=4$ and $N=3$, respectively. 
As a result of \cite[Proposition 6.1]{2019arXiv190202368B} and \cite[Theorem 8]{Drenska2019}, \eqref{eq:u} and \eqref{eq:elliptic3} are the solutions to \eqref{eq:pdegeo2} with $N=4$ and $N=3$, respectively. Plugging in $T=1,t=0,x=0$ into these equations, we obtain that for $N=4$, $u^1(0,0)=\frac{1}{2}\sqrt{\frac{\pi}{2} }, \ u(0)= \frac{\pi }{ 4\sqrt{2} }$, and for $N=3$,
$u^1(0,0)=\frac{4}{3\sqrt{2\pi}}, \ u(0)= \frac{4 }{ 6\sqrt{2} }. $
Due to the equalities 
\begin{align*}
\lim\limits_{M \to \infty} \frac{1}{\sqrt{M}}V^M(0,0)=u^1(0,0), \quad\lim\limits_{M \to \infty} \frac{1}{\sqrt{M}}V^{\frac{1}{M}}(0)=u(0),
\end{align*}
we conclude that for both $N=3$ and $N=4$, 
\begin{align*}
\lim\limits_{M \to \infty} \frac{V^M(0,0)}{V^{\frac{1}{M}}(0)}=\frac{u^1(0,0) }{u(0)}=\frac{2}{\sqrt{\pi}}.
\end{align*}
\end{proof}
\subsubsection{From ``optimality of the comb strategy conjecture'' to ``Finite vs Geometric regret conjecture''}

Let us start by recalling that for any $T>0$ and $(t,x)\in [0,T]\times \R^N$ the equation 
 \begin{align}\label{eq:sdecomb}
 X^{t,x}_u=x+\int_t^u \sigma_{{\cC}}(X^{t,x}_s)dW_s, \quad \mbox{ for }u\in [t,T],
 \end{align}
 has a unique weak solution (see \cite[Theorem 2.1]{bass1987uniqueness}).
\begin{prop}\label{conject}
Let $N\geq 2$ and assume that the comb strategies are optimal in the sense that the weak solution of \eqref{eq:sdecomb} is an optimizer of 
\eqref{eq:fk} and $u^T$ is $C^{0}([0,T]\times \R^N)\cap C^{1,2}([0,T)\times \R^N)$ and satisfies for some $\e>0$ and for all $x\in \R^N$
\begin{align}\label{eq:checkconject}
\int_0^\infty e^{-T}\sup_{|x-y|\leq \e}|\pa^2_{xx}u^T(0,y)| dT<\infty.
\end{align}
Then, the comb strategy is optimal for the problem \eqref{eq:pdegeo2} and the function $u$ defined at \eqref{eq:pdegeo2} satisfies
\begin{align}\label{eq:laplace}
u(x)=\E\left[\int_0^\infty e^{- T} \Phi(X^{0,x}_T)dT\right]=\int_0^\infty e^{- T}u^T(0,x)dT. 
\end{align}
\end{prop}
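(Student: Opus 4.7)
The plan is to define $\tilde u(x):=\int_0^\infty e^{-T}u^T(0,x)\,dT$ and to show that $\tilde u$ is a classical solution of \eqref{eq:pdegeo2} at which the supremum is attained by $e_{\cJ_\cC(x)}$. Invoking uniqueness of the viscosity solution of \eqref{eq:pdegeo2} (cf.\ \cite{Drenska2019}) then identifies $\tilde u$ with $u$, which yields simultaneously the optimality of the comb strategy for the geometric problem and the Laplace representation \eqref{eq:laplace}.

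First I would exploit the hypothesis that the weak solution $X^{0,x}$ of \eqref{eq:sdecomb} optimizes \eqref{eq:fk} to write $u^T(0,x)=\E[\Phi(X^{0,x}_T)]$. A standard $L^1$ estimate on $X^{0,x}$ gives $u^T(0,x)\leq \Phi(x)+C\sqrt T$, so $e^{-T}u^T(0,x)$ is integrable on $[0,\infty)$ and vanishes at infinity, and Fubini produces the first equality in \eqref{eq:laplace}. Next, assumption \eqref{eq:checkconject} combined with dominated convergence justifies differentiating twice under the integral sign in $x$, so $\tilde u\in C^2(\R^N)$ with
\[
\partial^2_{x_ix_j}\tilde u(x)=\int_0^\infty e^{-T}\partial^2_{x_ix_j}u^T(0,x)\,dT.
\]

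Because the comb diffusion is time-homogeneous, $u^T(t,x)$ depends on $(T,t)$ only through $T-t$, so \eqref{eq:pdegeolinear} yields $\partial_T u^T(0,x)=-\partial_t u^T(0,x)=\tfrac12\, e_{\cJ_\cC(x)}^\top \partial^2_{xx}u^T(0,x)\,e_{\cJ_\cC(x)}$, while the nonlinear equation \eqref{eq:pde}, which $u^T$ satisfies by hypothesis, gives in addition the family of inequalities $\partial_T u^T(0,x)\geq \tfrac12\, e_J^\top\partial^2_{xx}u^T(0,x)\,e_J$ for every $J\in P(N)$. Integration by parts in $T$, using $u^0(0,x)=\Phi(x)$ and the decay of $e^{-T}u^T(0,x)$ at infinity, delivers $\tilde u(x)-\Phi(x)=\int_0^\infty e^{-T}\partial_T u^T(0,x)\,dT$. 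Moving the integral through the quadratic forms then produces the linear identity
\[
\tilde u(x)-\tfrac12\, e_{\cJ_\cC(x)}^\top\partial^2_{xx}\tilde u(x)\,e_{\cJ_\cC(x)}=\Phi(x)
\]
together with $\tilde u(x)-\Phi(x)\geq \tfrac12\, e_J^\top\partial^2_{xx}\tilde u(x)\,e_J$ for every $J\in P(N)$. Taking the supremum over $J$ sandwiches $\tfrac12\sup_J e_J^\top\partial^2_{xx}\tilde u(x)\,e_J$ between $\tilde u(x)-\Phi(x)$ and itself, so $\tilde u$ is a classical, hence viscosity, solution of \eqref{eq:pdegeo2} at which the comb choice attains the supremum.

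The main obstacle I anticipate is the rigorous justification of the two analytic interchanges: differentiation under the integral (handled by \eqref{eq:checkconject}) and the integration by parts with the boundary term at infinity (which needs a uniform-in-$T$ sublinear growth bound on $u^T(0,x)$, obtained from the martingale estimate for the weak solution of the comb SDE). Once these are in place, the passage from the finite-horizon PDE \eqref{eq:pde} to the geometric-horizon PDE \eqref{eq:pdegeo2} through the Laplace transform is essentially algebraic, and uniqueness for \eqref{eq:pdegeo2} closes the argument.
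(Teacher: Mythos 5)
Your proof is correct and follows essentially the same route as the paper's: define the Laplace transform candidate, use \eqref{eq:checkconject} to differentiate under the integral, exploit the time-homogeneity $u^T(t,x)=u^\sharp(T-t,x)$ to convert $\partial_T$ to $-\partial_t$, integrate by parts to turn the finite-horizon PDE into the elliptic one with the comb choice attaining the supremum, and conclude by uniqueness of the viscosity solution of \eqref{eq:pdegeo2}. The only cosmetic difference is that the paper presents the comparison as a single chain of four inequalities whose endpoints coincide (forcing all to be equalities), and cites \cite[Theorem~5.1]{MR1118699} rather than \cite{Drenska2019} for the uniqueness with linear growth; your a priori bound $u^T(0,x)\le \Phi(x)+C\sqrt T$ from the SDE is a valid substitute for the paper's appeal to \cite[Theorem~4]{Drenska2019}.
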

\begin{remark}
Given the results in Proposition \ref{conject}, a simple change of variable formula allows us to claim that the function 
 $$u^\lambda (x)=\lambda^{-3/2}u(\sqrt{\lambda }x)$$
solves the equation
$$\lambda u^\lambda(x)-\frac{1}{2}\sup_{J\in P(N)} e_J^\top \pa^2 u^\lambda (x) e_J=\Phi(x)$$
and satisfies
$$u^\lambda(x)=\E\left[\int_0^\infty e^{- \lambda T} \Phi(X^{0,x}_T)dT\right]=\int_0^\infty e^{- \lambda T}u^T(0,x)dT$$
Therefore, a corollary of \eqref{eq:laplace} is the following relationship due to the Inverse Laplace transform from $u^\lambda(x)$ to $u^T(0,x)$,
\begin{align}\label{eq:gamma}
u^1(0,0)=\frac{u(0)}{2\pi i}\int_{1-i\infty}^{1+i\infty} {e^\lambda}{\lambda^{-3/2}}d\lambda =-\frac{\Gamma\left(-\frac{1}{2}\right)}{\pi}u(0)=\frac{2}{\sqrt{\pi}}u(0),
\end{align}
where $\Gamma$ is the gamma function. Thus, under the assumption of the optimality of the comb strategies for the finite time problem and some technical assumption the Proposition \ref{conject} yields the constant in the ``Finite versus Geometric" conjecture of \cite{2014arXiv1409.3040G} for all $N$; see also \cite{NIPS2017_6896}.
According to \eqref{2nd}, we have $$|\pa^2_{x_ix_j} u^T(0,x)| \leq C \left(\int_{-\infty}^{\infty} e^{-Tr^2} dr + \sum\limits_{l  \in \mathbb{Z}} \frac{e^{\frac{-T (\pi l)^2}{4(\theta \cdot x^o)^2}} }{\theta \cdot x^o}\right),$$ 
where $C$ is a positive constant. Multiplying both sides by $e^{-T}$ and integrating from $0$ to $\infty$, we can easily check \eqref{eq:checkconject} for our expression \eqref{eq:defu}. As a result, Proposition \ref{conject} in fact implies Theorems 3.1 and 3.2 of \cite{2019arXiv190202368B}.

\end{remark}

\section{Proofs}\label{s.proof}

\subsection{Proof of Theorem \ref{thm:propertiesu}}\label{ss.proof1}

\subsubsection{Continuity of $x \mapsto u^T(t,x)$}
\begin{proof}
Using \eqref{eq:defu} and the continuity of $x \mapsto x^{o}$, it suffices to show that  
\begin{align*}
\Lambda(r,x)= \left(\psi\left({{r}\theta\cdot x+\frac{\pi}{2}}\right)\sum_{k=1}^4\cos\left({r}{\a_k \cdot x}\right)-4
- \psi\left({{r}\theta\cdot x}\right)\sum_{k=1}^4\sin\left({r}{\a_k \cdot x}\right)\right).
\end{align*} 
is continuous with respect to $x$. Due to the formula  $\sin(x)+\sin(y)=2\sin(\frac{x+y}{2})\cos(\frac{x-y}{2})$, and the fact $\sum\limits_{k=1}^4 \alpha_k \cdot x=0$, we obtain 
\begin{equation*}
\begin{aligned}
\sum\limits_{k=1}^4 \sin(r \alpha_k \cdot x)=& 2 \sin\bigg(\frac{r(\alpha_1+\alpha_2) \cdot x}{2}\bigg)\cos\bigg(\frac{r(\alpha_1-\alpha_2)\cdot x}{2}\bigg) \\
&+2 \sin\bigg(\frac{r(\alpha_3+\alpha_4) \cdot x}{2}\bigg)\cos\bigg(\frac{r(\alpha_3-\alpha_4)\cdot x}{2}\bigg) \\
=& 2\sin\bigg(\frac{r(\alpha_1+\alpha_2) \cdot x}{2}\bigg)\bigg(\cos\bigg(\frac{r(\alpha_1-\alpha_2)\cdot x}{2}\bigg)-   \cos\bigg(\frac{r(\alpha_3-\alpha_4)\cdot x}{2}\bigg)     \bigg) \\
=& -2\sin(r \theta \cdot x)\bigg(\cos\bigg(\frac{r(\alpha_1-\alpha_2)\cdot x}{2}\bigg)-   \cos\bigg(\frac{r(\alpha_3-\alpha_4)\cdot x}{2}\bigg)     \bigg).\\
\end{aligned}
\end{equation*}
The square wave function $\psi(r \theta \cdot x)$ changes its sign at $r \theta \cdot x=k\pi, k \in \mathbb{Z}$, when $\sin(r \theta \cdot x)$ is equal to zero. Therefore the function $x \mapsto \psi(r \theta \cdot x)\sin(r \theta \cdot x)$ is continuous, and so is the term 
$\psi(r \theta \cdot x)\sum\limits_{k=1}^4 \sin(r \alpha_k \cdot x)$.

Similarly, using the formula $\cos(x)+\cos(y)=2\cos\left(\frac{x+y}{2} \right)\cos\left(\frac{x-y}{2} \right)$, we obtain 
\begin{align*}
\sum\limits_{k=1}^4 \cos(r \alpha_k \cdot x)=2\cos(r \theta \cdot x)\bigg(\cos\bigg(\frac{r(\alpha_1-\alpha_2)\cdot x}{2}\bigg)-   \cos\bigg(\frac{r(\alpha_3-\alpha_4)\cdot x}{2}\bigg)     \bigg).
\end{align*}
Then the continuity of $x \mapsto \psi\left({{r}\theta\cdot x+\frac{\pi}{2}}\right)\sum_{k=1}^4\cos\left({r}{\a_k \cdot x}\right)$ follows from the continuity of  $x \mapsto \psi\left({{r}\theta\cdot x+\frac{\pi}{2}}\right)\cos(r \theta \cdot x)$, and we finish the proof. 
\end{proof}

\subsubsection{Terminal condition}
\begin{proof}
Due to the continuity of $x \mapsto u^T(t,x)$  and the symmetry of $u^T$, we only need to show the equality $u^T(T,x)=\Phi(x)$ for the case $x^{(1)} <x^{(2)}<x^{(3)}< x^{(4)}$. Recall the definition of sine integral function $si(x)$ and cosine integral function $Ci(x)$ (see e.g. \cite{MR1225604}),
\begin{align*}
si(x)=-\int_x^{\infty} \frac{\sin(t)}{t} dt, \quad Ci(x)=-\int_x^{\infty} \frac{\cos(t)}{t} dt,
\end{align*}
and denote $$T_0=-\frac{\pi}{2 \theta \cdot x^o}, \quad A_k=\alpha_k \cdot x^o, \quad R_k=|A_kT_0|, \quad k=1, \dotso, 4. $$
Under the assumption $x^{(1)} <x^{(2)}<x^{(3)}< x^{(4)}$, it is easy to check the following inequalities 
\begin{align}\label{eq:range}
-\frac{3\pi}{2}<A_1T_0<-\frac{\pi}{2}<A_2T_0<A_3T_0<\frac{\pi}{2}<A_4 T_0 <\frac{3\pi}{2}.
\end{align}
According to \eqref{eq:defu}, we have 
\begin{align*}
2\sqrt{2} \sum\limits_{k=1}^4 x_k-8\sqrt{2}u^T(T,x)= \int_0^{\infty} \frac{\Lambda(r,x^o)}{r^2} dr. 
\end{align*}
Note that 
\[\psi\left({{r}\theta\cdot x^o+\frac{\pi}{2}}\right)=
\begin{cases}
-1, \quad r \in [(4n+1)T_0, (4n+3)T_0] \\
+1, \quad r \in [(4n-1)T_0, (4n+1)T_0], 
\end{cases}
\]
\[\psi\left({{r}\theta\cdot x^o}\right)=
\begin{cases}
-1, \quad r \in [4nT_0, (4n+2)T_0] \\
+1, \quad r \in [(4n+2)T_0, (4n+4)T_0].
\end{cases}
\]
We can rewrite the integral as infinite sum of integrals
\begin{align}\label{eq:integralpart}
\int_0^{\infty}   \frac{\Lambda(r,x^o)}{r^2} dr=&\int_0^{\infty}  \left( \psi\left({{r}\theta\cdot x^o+\frac{\pi}{2}}\right)\sum\limits_{k=1}^4 \frac{\cos(A_kr)}{r^2}-\frac{4}{r^2} \right) dr \notag \\
&-\int_0^{\infty} \psi\left({{r}\theta\cdot x^o}\right) \sum\limits_{k=1}^4 \frac{\sin(A_k4)}{r^2} dr \notag \\
=&\sum\limits_{n=0}^{\infty} (-1)^n \int_{2nT_0}^{(2n+2)T_0} \sum\limits_{k=1}^4\frac{\sin(A_k r)}{r^2}dr+\int_0^{T_0} \sum\limits_{k=1}^4\frac{\cos(A_kr)-1}{r^2} dr \notag \\
&+\sum\limits_{n=1}^{\infty} (-1)^n \int_{(2n-1)T_0}^{(2n+1)T_0} \sum\limits_{k=1}^4 \frac{\cos(A_kr)}{r^2} dr -\int_{T_0}^{\infty} \frac{4}{r^2} dr.
\end{align}
Our aim is to prove $\int_0^{\infty} \frac{\Lambda(r,x^o)}{r^2} dr=-4A_4$, which is equivalent to $u^T(T,x)=\Phi(x)$.

It is easy to check the following indefinite integral formulas,
\begin{align*}
& \int \frac{\sin(x)}{x^2} dx =Ci(x)-\frac{\sin(x)}{x}+\text{Constant}, \\
& \int \frac{\cos(x)}{x^2} dx=-si(x)-\frac{\cos(x)}{x}+\text{Constant}. \\
\end{align*}
Let us compute the integral 
\begin{align*}
\int_0^{2T_0} \frac{1}{r^2} \sum\limits_{k=1}^4 \sin (A_k r) dr =&  \sum\limits_{k=1}^4 A_k\left(Ci(2|A_k T_0|)-\frac{\sin(2|A_k T_0|)}{2|A_k T_0|}\right) \\ 
&-  \lim\limits_{\epsilon \to 0} \sum\limits_{k=1}^4 A_k \left(Ci(|A_k \epsilon|)-\frac{\sin(|A_k \epsilon|)}{|A_k \epsilon|} \right). \\
\end{align*}
According to $\sum\limits_{k=1}^4 A_k=0$ and $\lim\limits_{x \to 0} \frac{\sin(x)}{x}=1$, the term $\sum\limits_{k=1}^4 A_k \frac{\sin(|A_k \epsilon|    ) }{|A_k \epsilon|}$ vanishes. Since the expansion of $Ci(x)$ near $x=0$ is $\ln(x)+\gamma$, where $\gamma$ is the Euler-Mascheroni constant (see e.g. \cite{MR1411676}), we obtain that
$$\lim\limits_{\epsilon \to 0} \sum\limits_{k=1}^4 A_k Ci(|A_k \epsilon|)= \lim\limits_{\epsilon \to 0} \sum\limits_{k=1}^4 A_k(\ln(|A_k|)+\ln(\epsilon)+\gamma)=\sum\limits_{k=1}^4 A_k \ln(|A_k|) .$$
Accordingly, we have $$ \int_0^{2T_0} \frac{1}{r^2} \sum\limits_{k=1}^4 \sin (A_k r) dr =  \sum\limits_{k=1}^4 A_k\left(Ci(2|A_k T_0|)-\frac{\sin(2|A_k T_0|)}{2|A_k T_0|} )-\sum\limits_{k=1}^4 A_k \ln(|A_k|\right),$$
and similarly for each $n \in \mathbb{N}$, 
\begin{equation*}
\begin{aligned}
\int_{2nT_0}^{(2n+2)T_0}  \frac{1}{r^2} \sum\limits_{k=1}^4 \sin (A_k r) dr=& \sum\limits_{k=1}^4 A_k(Ci((2n+2)|A_k T_0|)-\frac{\sin((2n+2)|A_k T_0|)}{(2n+2)|A_k T_0|} ) \\
&-\sum\limits_{k=1}^4 A_k(Ci(2n|A_k T_0|)-\frac{\sin(2n|A_k T_0|)}{2n|A_k T_0|} ). \\
\end{aligned}
\end{equation*}
Therefore, we get the equation
\begin{align}\label{eq:sinsum}
\int_0^{+\infty} -\psi\left({{r}\theta\cdot x^o+\frac{\pi}{2}}\right) \sum\limits_{k=1}^4 \frac{\sin(A_kr)}{r^2} dr= &- \sum\limits_{k=1}^4 A_k\ln(|A_k|) + 2 \sum\limits_{k=1}^4  \sum\limits_{n=1}^{\infty} A_k (-1)^{n+1} Ci( 2n |A_k T_0|)  \notag  \\
& -  2 \sum\limits_{k=1}^4  \sum\limits_{n=1}^{\infty}  (-1)^{n+1} \frac{\sin(2nA_k T_0)}{2n T_0}.
\end{align}

Now we deal with the cosine term in \eqref{eq:integralpart}. Due to the equality $si(0)=-\frac{\pi}{2}$, it can be seen that 
\begin{align*}
\int_0^{T_0} \frac{1}{r^2} (\sum\limits_{k=1}^4 \cos (A_k r)-4) dr=&-\sum\limits_{k=1}^4 |A_k|si(|A_kT_0|)-\sum\limits_{k=1}^4 \frac{\cos(|A_kT_0|)}{T_0}+\frac{4}{T_0}   \\
&+\lim\limits_{\epsilon \to 0^+}  \left( |A_k|\sum\limits_{k=1}^4 si(|A_k \epsilon|)+\sum\limits_{k=1}^4 \frac{\cos(|A_k \epsilon|)-1}{\epsilon}\right) \\
=&-\frac{\pi}{2}\sum\limits_{k=1}^4 |A_k|-\sum\limits_{k=1}^4 |A_k|si(|A_kT_0|)-\sum\limits_{k=1}^4 \frac{\cos(|A_kT_0|)}{T_0}+\frac{4}{T_0}, \\
\end{align*}
and similarly 
\begin{equation*}
\begin{aligned}
\int_{(2n+1)T_0}^{(2n+3)T_0} \frac{1}{r^2} \sum\limits_{k=1}^4 \cos (A_k r) dr=&-\sum\limits_{k=1}^4 |A_k|si((2n+3)|A_kT_0|)-\sum\limits_{k=1}^4 \frac{\cos((2n+3)|A_kT_0|)}{(2n+3)T_0} \\
&+\sum\limits_{k=1}^4 |A_k|si((2n+1)|A_kT_0|)-\sum\limits_{k=1}^4 \frac{\cos((2n+1)|A_kT_0|)}{(2n+1)T_0}.
\end{aligned}
\end{equation*}
Then, in conjunction with the equality $\int_{T_0}^{+\infty} \frac{-4}{r^2} dr =-\frac{4}{T_0}$, we obtain that
\begin{align}\label{eq:cossum}
& \int_0^{+\infty}  \left( \psi\left({{r}\theta\cdot x^o}+\frac{\pi}{2} \right)   \sum\limits_{k=1}^4 \frac{\cos(A_kr)}{r^2} -\frac{4}{r^2} \right) dr \notag \\ 
& \quad \quad = -\frac{\pi}{2}\sum\limits_{k=1}^4|A_k| 
-  2 \sum\limits_{k=1}^4  \sum\limits_{n=1}^{\infty}  (-1)^{n+1} |A_k| si(| (2n-1)A_k T_0|) \notag \\
& \quad \quad \quad -  2 \sum\limits_{k=1}^4  \sum\limits_{n=1}^{\infty}  (-1)^{n+1} \frac{\cos((2n-1)A_k T_0)}{(2n-1) T_0}. 
\end{align}
Using the inverse Fourier transform, we have
\begin{align*}
&\sum\limits_{n=1}^{\infty}  (-1)^{n+1} \frac{\cos((2n-1)A_k T_0)}{(2n-1) T_0} =\frac{\pi}{4 T_0} sign \left(\tan\left(\frac{\pi}{4}+\frac{A_k T_0  }{2}       \right)\right), \\
& \sum\limits_{n=1}^{\infty}  (-1)^{n+1} \frac{\sin(2nA_k T_0)}{2n  T_0}=\frac{i\left(\log\left(1+e^{-i 2A_k T_0 }\right)-\log\left(1+e^{i 2A_k T_0 }\right) \right)   }{   4T_0 }.
\end{align*}
Recalling the inequalities \eqref{eq:range}, for $k=1,4$, we have $|A_kT_0| \in (\frac{\pi}{2}, \frac{3\pi}{2})$, and hence the term $sign \left(\tan\left(\frac{\pi}{4}+\frac{A_k T_0  }{2}       \right)\right)=-1$. For $k=2,3$, since $|A_kT_0| < \frac{\pi}{2}$, we get $sign \left(\tan\left(\frac{\pi}{4}+\frac{A_k T_0  }{2}       \right)\right)=1$, and therefore  
\begin{equation}\label{eq:arcsum}
\sum\limits_{k=1}^4\frac{\pi}{2T_0} sign \left(\tan\left(\frac{\pi}{4}+\frac{A_k T_0  }{2}       \right)\right)=0.
\end{equation}
 It can be seen that the function $$ x \mapsto i\left(\log(1+e^{-ix})-\log(1+e^{ix}) \right) \equiv i \ \log\left(\frac{1+e^{-ix}}{1+e^{ix}} \right) \equiv x \mod 2\pi $$
is $2\pi$-periodic, and equals to $x$ when restricted to $(-\pi, \pi)$. So that we obtain 
\[\sum\limits_{n=1}^{\infty}  (-1)^{n+1} \frac{\sin(2nA_k T_0)}{2n  T_0}=
\begin{cases}
\frac{2A_kT_0+2\pi}{4T_0},&  \quad \text{if} \ \ k=1, \\
\frac{2A_kT_0}{4T_0},&  \quad \text{if} \ \ k=2,3, \\
\frac{2A_kT_0-2\pi}{4T_0},&  \quad \text{if} \ \ k=4, \\
\end{cases}
\]
and hence 
\begin{equation}\label{eq:logsum}
\sum\limits_{k=1}^4\sum\limits_{n=1}^{\infty}  (-1)^{n+1} \frac{\sin(2nA_k T_0)}{2n  T_0}=0.
\end{equation}
Combining \eqref{eq:integralpart}, \eqref{eq:sinsum}, \eqref{eq:cossum}, \eqref{eq:arcsum} and \eqref{eq:logsum}, we simplify the expression,
\begin{align}
\int_0^{\infty} \frac{\Lambda(r,x^o) dr}{r^2}=&- \sum\limits_{k=1}^4 A_k\ln(|A_k|)-\frac{\pi}{2}\sum\limits_{k=1}^4|A_k| + 2 \sum\limits_{k=1}^4  \sum\limits_{n=1}^{\infty}  (-1)^{n+1} |A_k| si(| (2n-1)A_k T_0|) \notag \\
&-  2 \sum\limits_{k=1}^4  \sum\limits_{n=1}^{\infty}  (-1)^{n+1} \frac{\cos((2n-1)A_k T_0)}{(2n-1) T_0} 
+ 2 \sum\limits_{k=1}^4  \sum\limits_{n=1}^{\infty} A_k (-1)^{n+1} Ci( 2n |A_k T_0|) \notag \\
&-  2 \sum\limits_{k=1}^4  \sum\limits_{n=1}^{\infty}  (-1)^{n+1} \frac{\sin(2nA_k T_0)}{2n T_0} \notag \\
=&-\frac{\pi}{2}\sum\limits_{k=1}^4|A_k| + 2 \sum\limits_{k=1}^4  \sum\limits_{n=1}^{\infty}  (-1)^{n+1} |A_k| si(| (2n-1)A_k T_0|) \notag \\
&- \sum\limits_{k=1}^4 A_k\ln(|A_k|)+2 \sum\limits_{k=1}^4  \sum\limits_{n=1}^{\infty} A_k (-1)^{n+1} Ci( 2n |A_k T_0|). \label{eq:sincosintegral}
\end{align}

It remains to calculate the infinite sum including $Ci(x)$ and $si(x)$. Note that 
\begin{equation*}
-2\sum\limits_{n=1}^{\infty} (-1)^{n+1} si((2n-1)R_k)=2 \sum\limits_{n=1}^{\infty} \int_{(4n-3)R_k}^{(4n-1)R_k} \frac{\sin(r)}{r} dr.
\end{equation*}
and $\frac{\sin(z)}{z}=Im \frac{e^{iz}}{z}$for $z \in \mathbb{R}$.  We apply contour integral to $\frac{e^{iz}}{z}$. Denoting the curves in the counterclockwise direction by $$\gamma^k_n:=\{nR_ke^{i\theta}: \theta \in [0, \pi]                \}, $$
we have equalities $$2 \int_{(4n-3)R_k}^{(4n-1)R_k} -\int_{\gamma^k_{4n-3}} + \int_{\gamma^k_{4n-1}} \frac{e^{iz}}{z} dz =0, n \in \mathbb{N} .$$
Therefore, we obtain 
\begin{align}\label{equation:star}
-2\sum\limits_{n=1}^{\infty} (-1)^{n+1} si((2n-1)R_k)=&  \sum\limits_{n=1}^{\infty} (-1)^{n+1} Im \int_{\gamma^k_{2n-1}} \frac{e^{iz}}{z} dz   \notag \\
=&  \sum\limits_{n=1}^{\infty} (-1)^{n+1} Re \int_0^{\pi} e^{i(2n-1)R_ke^{i\theta}}d \theta. \tag{$*$}
\end{align}
According to the inequalities \eqref{eq:range}, we have $2R_k \not \in \{ -3\pi, -\pi, \pi, 3\pi\}$, and hence can exchange the infinite sum and the integral and compute the geometric series to obtain
\begin{equation}
-2\sum\limits_{n=1}^{\infty} (-1)^{n+1} si((2n-1)R_k)=Re \int_0^{\pi} \frac{e^{R_ke^{i\theta}}}{1+e^{2R_ke^{i\theta}}} d \theta .
\end{equation}

Similarly, we calculate $$2 \sum\limits_{n=1}^{\infty} (-1)^{n+1} Ci(2n R_k)=-2 Re\left( \sum\limits_{n=1}^{\infty} \int_{(4n-2)R_k}^{4nR_k} \frac{e^{iz}}{z} dz\right) .$$
Denoting the quarter of circles in the counterclockwise derivation by 
$$\tilde{\gamma}^k_n:=\{nR_ke^{i\theta}: \theta \in [0, \pi/2] \},$$ 
we obtain that 
\begin{align*}
0=& \int_{(4n-2)R_k}^{4nR_k}+\int_{\tilde{\gamma}^k_{4n}}-\int_{i(4n-2)R_k}^{i4nR_k}-\int_{\tilde{\gamma}^k_{4n-2}} \frac{e^{iz}}{z} dz \\
=& \int_{(4n-2)R_k}^{4nR_k} \frac{e^{iz}}{z}dz + i\int_{\frac{\pi}{2}}^0 e^{(4n-2)R_ke^{i\theta}}d \theta + i\int_0^{\frac{\pi}{2}} e^{4nR_ke^{i\theta}} d \theta + \int_{4nR_k}^{(4n-2)R_k}\frac{e^{-r}}{r}dr.
\end{align*}
Recalling the definition of integral exponential function for $x >0$, $$E_1(x)=\int_x^{+\infty} \frac{e^{-r}}{r} dr=\int_0^{+\infty} \exp(-xe^t)dt,$$
it can be seen that 
\begin{align*}
-2Re \int_{(4n-2)R_k}^{4nR_k} \frac{e^{iz}}{z}dz=&2Re \left(-i\left(\int_0^{\frac{\pi}{2}} e^{(4n-2)R_ke^{i\theta}}d \theta- \int_0^{\frac{\pi}{2}} e^{4nR_ke^{i\theta}} d \theta\right) \right) \\
&+2(E_1(4nR_k)-E_1((4n-2)R_k))  \\
=&2Im \left(\int_0^{\frac{\pi}{2}} e^{(4n-2)R_ke^{i\theta}}d \theta- \int_0^{\frac{\pi}{2}} e^{4nR_ke^{i\theta}} d \theta\right) \\
&+ 2(E_1(4nR_k)-E_1((4n-2)R_k)) .
\end{align*}
By direct computation, we have 
\begin{align}\label{equation:starstar}
2 \sum\limits_{n=1}^{\infty} (-1)^{n+1} Ci(2n R_k)&=2Im\left(\int_0^{\frac{\pi}{2}} \frac{e^{i2R_ke^{i\theta}}}{1+ e^{i2R_ke^{i\theta}}} d \theta\right) + 2 \left(\sum\limits_{n=1}^{\infty}(E_1(4nR_k)-E_1((4n-2)R_k)) \right)  \notag \\
&=2Im\left(\int_0^{\frac{\pi}{2}} \frac{e^{i2R_ke^{i\theta}}}{1+ e^{i2R_ke^{i\theta}}} d \theta\right)-2 \int_0^{\infty} \frac{e^{-2R_ke^r}}{1+e^{-2R_ke^r}}dr  \notag \\
&=2Im\left(\int_0^{\frac{\pi}{2}} \frac{e^{i2R_ke^{i\theta}}}{1+ e^{i2R_ke^{i\theta}}} d \theta\right)-2\int_{R_k}^{\infty} \frac{e^{-2t}}{t(1+e^{-2t})} dt, \tag{$**$} 
\end{align}
where the last equation follows from the change of variable $t=R_ke^r$. 

Then, we can further simplify the expression \eqref{eq:sincosintegral} using \eqref{equation:star} and \eqref{equation:starstar},
\begin{align}\label{sl1}
\int_0^{\infty} \frac{\Lambda(r,x^o)}{r^2}dr=& - \sum\limits_{k=1}^4 A_k\ln(|A_k|)-\frac{\pi}{2}\sum\limits_{k=1}^4|A_k|   + \sum\limits_{k=1}^4 |A_k| Re \int_0^{\pi} \frac{e^{iR_ke^{i\theta}}}{1+e^{i2R_ke^{i\theta}}}d \theta \notag \\
&+\sum\limits_{k=1}^4 2A_k Im\int_0^{\frac{\pi}{2}} \frac{e^{i2R_ke^{i\theta}}}{1+ e^{i2R_ke^{i\theta}}} d \theta-2 A_k \int_{R_k}^{\infty} \frac{e^{-2t}}{t(1+e^{-2t})} dt .
\end{align}
Let us denote
\begin{equation*}
\begin{aligned}
I_k& =\int_0^{\pi} \frac{e^{iR_ke^{i\theta}}}{1+e^{i2R_ke^{i\theta}}}d \theta=-i\int_{\gamma^k_1} \frac{e^{iz}}{z(1+e^{i2z})}dz, \\
J_k& = \int_0^{\frac{\pi}{2}} \frac{e^{i2R_ke^{i\theta}}}{1+ e^{i2R_ke^{i\theta}}} d \theta= -i \int_{\tilde{\gamma}^k_1} \frac{e^{i2z}}{z(1+e^{i2z})}dz. \\
\end{aligned}
\end{equation*}

\begin{figure}
\begin{tikzpicture}
\def\bigradius{2}
\def\littleradius{0.2}

\draw [help lines,->] (-1.5*\bigradius, 0) -- (1.5*\bigradius,0);
 \draw [help lines,->]   (0, -0.2*\bigradius) -- (0, 1.2*\bigradius);
\draw[black, thick,   decoration={ markings,
      mark=at position 0.35 with {\arrow{latex}},
      mark=at position 0.73 with {\arrow{latex}},  
      mark=at position 0.955 with {\arrow{latex}}}, 
      postaction={decorate}]  
   (0:\bigradius) arc (0:180:\bigradius)
   --(180:\littleradius) arc (180:0:\littleradius)
   -- cycle;
  
\node at (-1.5,2.1) {$I_k,k=2,3$};
\node at (2.7,-0.2) {$\frac{\pi}{2}$};
\end{tikzpicture}
\begin{tikzpicture}
\def\bigradius{2}
\def\littleradius{0.2}

\draw [help lines,->] (-1.5*\bigradius, 0) -- (1.5*\bigradius,0);
 \draw [help lines,->]   (0, -0.2*\bigradius) -- (0, 1.2*\bigradius);
\draw[black, thick,   decoration={ markings,
      mark=at position 0.35 with {\arrow{latex}},
      mark=at position 0.73 with {\arrow{latex}},  
      mark=at position 0.87 with {\arrow{latex}}}, 
      postaction={decorate}]  
   (0:\bigradius) arc (0:180:\bigradius)--(180:1.5) arc (180:0:\littleradius)
     --(180:\littleradius) arc (180:0:\littleradius)--(0:1.1) arc (180:0:\littleradius)
     --cycle;
  
\node at (-1.5,2.1) {$I_k, k=1,4$};
\node at (1.3,-0.2) {$\frac{\pi}{2}$};
\end{tikzpicture}
\caption{Contour of $I_k$}
\label{figure1}
\end{figure}
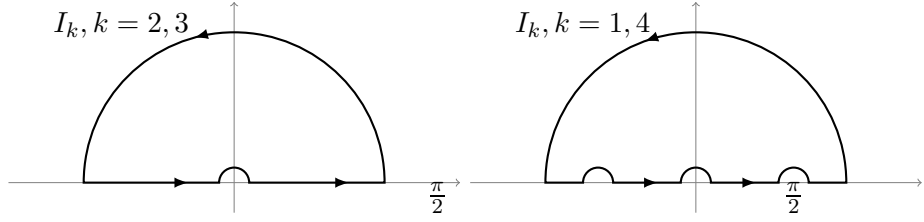
For $k=2,3$, we have $R_k <\frac{\pi}{2}$, and therefore $0$ is the only pole of complex function $ z \mapsto \frac{e^{iz}}{z(1+e^{i2z})}$ over the interval $[-R_k,R_k]$. According to the contour integral (see Figure \ref{figure1}), we have that 
\begin{equation*}
\begin{aligned}
0=&\int_{\gamma^k_1}+\int_{-R_k}^{-\epsilon}-\int_{\{\epsilon e^{i\theta}: \theta \in [0, \pi]    \}}+\int_{\epsilon}^{R_k}\frac{-i e^{iz}}{z(1+e^{i2z})}dz \\
=&I_k-i \int_{-R_k}^{-\epsilon} \frac{e^{it}}{t(1+e^{i2t})} dt -\int_0^{\pi} \frac{e^{i\epsilon e^{i\theta}}}{1+e^{i2\epsilon e^{i\theta}}}d \theta-i \int_{\epsilon}^{R_k} \frac{e^{it}}{t(1+e^{i2t})}dt.
\end{aligned}
\end{equation*}
Since $\frac{e^{it}}{t(1+e^{i2t})}=\frac{e^{it}+e^{-it}}{t|1+e^{i2t}|^2}$ is real, and $\lim\limits_{\epsilon \to 0} \frac{e^{i\epsilon e^{i\theta}}}{1+e^{i2\epsilon e^{i\theta}}}=\frac{1}{2}$, we obtain that $Re I_k= \frac{\pi}{2}.$
For $k=1,4$, since $R_k \in (\frac{\pi}{2}, \frac{3\pi}{2})$, we have $-\frac{\pi}{2},0,\frac{\pi}{2}$ are three poles of the complex function $ z \mapsto \frac{e^{iz}}{z(1+e^{i2z})}$ over the interval $[-R_k,R_k]$. Again by contour integral (see Figure \ref{figure1}) the real part of $I_k$ is equal to the integral around the three poles, 
\begin{align*}
ReI_k & =\lim\limits_{\epsilon \to 0} Re\int_0^{\pi} \frac{i e^{i\epsilon e^{i \theta}}\epsilon e^{i \theta}}{1-e^{i2\epsilon e^{i \theta}}}\bigg(\frac{1}{\frac{\pi}{2}+\epsilon e^{i \theta}}+\frac{1}{\frac{\pi}{2}-\epsilon e^{i \theta}}\bigg) d \theta +\frac{\pi}{2}  \notag \\
&=\int_0^{\pi} \lim\limits_{\epsilon \to 0} Re  \frac{i e^{i\epsilon e^{i \theta}}\epsilon e^{i \theta}}{1-e^{i2\epsilon e^{i \theta}}}\bigg(\frac{1}{\frac{\pi}{2}+\epsilon e^{i \theta}}+\frac{1}{\frac{\pi}{2}-\epsilon e^{i \theta}}\bigg) d \theta +\frac{\pi}{2}  \notag \\
&=\int_0^{\pi} -\frac{2}{\pi} d \theta +\frac{\pi}{2}=\frac{\pi}{2}-2, 
\end{align*}
and hence 
\begin{align}\label{Re}
\sum\limits_{k=1}^4|A_k| Re I_k=\frac{\pi}{2} \sum\limits_{k=1}^4 |A_k| - 2|A_1|-2|A_4|.
\end{align}
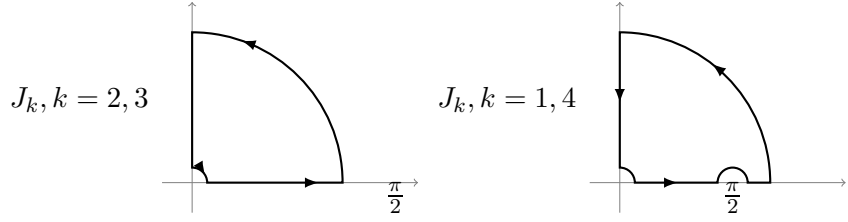
\begin{figure}
\begin{tikzpicture}
\def\bigradius{2}
\def\littleradius{0.2}

\draw [help lines,->] (-0.2*\bigradius, 0) -- (1.5*\bigradius,0);
 \draw [help lines,->]   (0, -0.2*\bigradius) -- (0, 1.2*\bigradius);
\draw[black, thick,   decoration={ markings,
      mark=at position 0.35 with {\arrow{latex}},
      mark=at position 0.73 with {\arrow{latex}},  
      mark=at position 0.955 with {\arrow{latex}}}, 
      postaction={decorate}]  
   (0:\bigradius) arc (0:90:\bigradius)--(90:\littleradius) arc (90:0:\littleradius)
   -- cycle;
  
\node at (-1.5,1.1) {$J_k,k=2,3$};
\node at (2.7,-0.2) {$\frac{\pi}{2}$};
\end{tikzpicture}
\begin{tikzpicture}
\def\bigradius{2}
\def\littleradius{0.2}

\draw [help lines,->] (-0.2*\bigradius, 0) -- (1.5*\bigradius,0);
 \draw [help lines,->]   (0, -0.2*\bigradius) -- (0, 1.2*\bigradius);
\draw[black, thick,   decoration={ markings,
      mark=at position 0.25 with {\arrow{latex}},
      mark=at position 0.56 with {\arrow{latex}},  
      mark=at position 0.80with {\arrow{latex}}}, 
      postaction={decorate}]  
   (0:\bigradius) arc (0:90:\bigradius)--(90:\littleradius) arc (90:0:\littleradius)
   --(0:1.3) arc (180:0:\littleradius)
   -- cycle;
  
\node at (-1.5,1.1) {$J_k,k=1,4$};
\node at (1.5,-0.2) {$\frac{\pi}{2}$};
\end{tikzpicture}
\caption{Contour of $J_k$}
\label{figure2}
\end{figure}
For $k=2, 3$, we apply contour integral to $J_k$ (see Figure \ref{figure2}), 
\begin{equation*}
\begin{aligned}
0&=\int_{\tilde{\gamma}^k_1}+\int_{iR_k}^{i\epsilon}-\int_{\{\epsilon e^{i\theta}: \theta \in [0, \pi/2]    \}}+\int_{\epsilon}^{R_k}\frac{-i e^{i2z}}{z(1+e^{i2z})}dz \\
&=J_k-i \int_{R_k}^{\epsilon} \frac{e^{-2t}}{t(1+e^{-2t})} dt +\int_{\frac{\pi}{2}}^0  \frac{e^{i2 \epsilon e^{i\theta}}}{1+ e^{i2 \epsilon e^{i\theta}}} d \theta-i \int_{\epsilon}^{R_k} \frac{e^{i2t}}{t(1+e^{i2t})} dt.
\end{aligned}
\end{equation*}
Noting that imaginary part of $i\frac{e^{i2t} }{t(1+e^{i2t})}$ is just $\frac{1}{2t} $, we obtain that $$Im J_k=\int_{R_k}^{\epsilon} \frac{e^{-2t}}{t(1+e^{-2t})} dt + Im \int_0^{\frac{\pi}{2}}  \frac{e^{i2 \epsilon e^{i\theta}}}{1+ e^{i2 \epsilon e^{i\theta}}} d \theta+\int_{\epsilon}^{R_k} \frac{1}{2t} dt.$$
For $k=1,4$, $z=\frac{\pi}{2}$ is the other pole over the interval $[0, R_k]$ (see Figure \ref{figure2}), and we have 
\begin{equation*}
\begin{aligned}
0=& J_k-i \int_{R_k}^{\epsilon} \frac{e^{-2t}}{t(1+e^{-2t})} dt +\int_{\frac{\pi}{2}}^0  \frac{e^{i2 \epsilon e^{i\theta}}}{1+ e^{i2 \epsilon e^{i\theta}}} d \theta-i \int_{\epsilon}^{\frac{\pi}{2}-\epsilon} \frac{e^{i2t}}{t(1+e^{i2t})} dt \\
&-i \int_{\frac{\pi}{2}+\epsilon}^{R_k} \frac{e^{i2t}}{t(1+e^{i2t})} dt + \int_0^{\pi} \frac{e^{i2\epsilon e^{i\theta}} \epsilon e^{i \theta}}{(\frac{\pi}{2}+ \epsilon e^{i \theta})(1-e^{i2\epsilon e^{i\theta}})} d \theta,
\end{aligned}
\end{equation*}
and therefore, 
\begin{equation*}
\begin{aligned}
Im J_k=&\int_{R_k}^{\epsilon} \frac{e^{-2t}}{t(1+e^{-2t})} dt + Im \int_0^{\frac{\pi}{2}}  \frac{e^{i2 \epsilon e^{i\theta}}}{1+ e^{i2 \epsilon e^{i\theta}}} d \theta+\int_{\epsilon}^{R_k} \frac{1}{2t} dt \\
&-\int_{\frac{\pi}{2}-\epsilon}^{\frac{\pi}{2}+\epsilon} \frac{1}{2t} dt- Im \int_0^{\pi} \frac{e^{i2\epsilon e^{i\theta}} \epsilon e^{i \theta}}{(\frac{\pi}{2}+ \epsilon e^{i \theta})(1-e^{i2\epsilon e^{i\theta}})} d \theta.
\end{aligned}
\end{equation*}
Take 
\begin{align*}
P^1_{\epsilon}=Im \int_0^{\frac{\pi}{2}}  \frac{e^{i2 \epsilon e^{i\theta}}}{1+ e^{i2 \epsilon e^{i\theta}}} d \theta, \quad \quad 
P^2_{\epsilon}=Im \int_0^{\pi} \frac{e^{i2\epsilon e^{i\theta}} \epsilon e^{i \theta}}{(\frac{\pi}{2}+ \epsilon e^{i \theta})(1-e^{i2\epsilon e^{i\theta}})} d \theta.
\end{align*}
Then we compute 
\begin{align*}
2Im J_k-2\int_{R_k}^{\infty} \frac{e^{-2t}}{t(1+e^{-2t})} dt - \ln(|A_k|)=&-2\int_{\epsilon}^{\infty} \frac{e^{-2t}}{t(1+e^{-2t})} dt+(\ln(R_k)-\ln(|A_k|)-\ln(\epsilon) ) \\
&+2P^1_{\epsilon}- \mathbbm{1}_{\{k=1,4\}}\left(\ln \left(\frac{\pi/2+\epsilon}{\pi/2-\epsilon} \right)+2P_{\epsilon}^2 \right).
\end{align*}
As a result of $\sum\limits_{k=1}^4 A_k=0$, $ \ln(R_k)-\ln(|A_k|)=\ln(T_0)$ and $\lim\limits_{\epsilon \to 0} P^2_{\epsilon}=1 $, it can be seen that 
\begin{align}\label{Im}
\sum\limits_{k=1}^4  A_k\bigg( 2ImJ_k- & \ln(|A_k|)-2\int_{R_k}^{\infty} \frac{e^{-2t}}{t(1+e^{-2t})} dt  \bigg) \notag \\
= & - 2\sum_{k=1}^4 A_k \int_{\epsilon}^{\infty} \frac{e^{-2t}}{t(1+e^{-2t})} dt +\sum\limits_{k=1}^4 A_k(  \ln(T_0)-\ln(\epsilon) ) \notag  \\
&+\sum\limits_{k=1}^4  A_k P^1_{\epsilon}-(A_1+A_4)\left(\ln \left(\frac{\pi/2+\epsilon}{\pi/2-\epsilon} \right)+2P_{\epsilon}^2 \right) \notag \\
=&- \lim\limits_{\epsilon \to 0} (A_1+A_4)\left(\ln \left(\frac{\pi/2+\epsilon}{\pi/2-\epsilon} \right)+2P_{\epsilon}^2 \right)  \notag \\
=&-2A_1-2A_4. 
\end{align}

Combining \eqref{sl1}, \eqref{Re}, \eqref{Im}, we obtain 

\begin{align}\label{sl2}
\int_0^{\infty}\frac{\Lambda(r,x^o)}{r^2} dr=-2|A_1|-2|A_4|-2A_1-2A_4=-4A_4,
\end{align}
which concludes the result. 

\end{proof}

\subsubsection{Smoothness} 

\begin{proof}
{\it Step 1: Equation \eqref{1st}.}
As a result of  $$\sum\limits_{k=1}^4 \sin(r\a_k \cdot x^{o})=\sum\limits_{k=1}^4 \sin(r\a_k \cdot x), \sum\limits_{k=1}^4 \cos(r\a_k \cdot x^{o})=\sum\limits_{k=1}^4 \cos(r\a_k \cdot x),$$
we obtain that 
\begin{align*}
u^T(t,x)=& \frac{-1}{16\sqrt{2}}\int_{-\infty}^{\infty} \frac{e^{-(T-t)r^2}}{r^2}\left( \psi\left({{r}\theta\cdot x^{o}+\frac{\pi}{2}}\right)\sum\limits_{k=1}^4 \cos(r\a_k \cdot x)-4  \right. \\
& \left. - \psi\left( r \theta\cdot x^{o}\right)\sum\limits_{k=1}^4 \sin(r\a_k \cdot x) \right)dr +\frac{1}{4}\sum_{k=1}^4x_k+\frac{1}{2} \sqrt{\frac{(T-t)\pi}{2} }.
\end{align*}
To stress the dependence of $T_0$ on $x$, we denote it as $T_0(x):=-\frac{\pi}{2\theta \cdot x^{o}}.$ Since $\theta \cdot x^{o} < 0$, define two functional series $F_l(x), G_l(x), l \in \mathbb{Z}$ by
\begin{align*}
F_l(x):=\int_{(2l-1)T_0(x)}^{(2l+1)T_0(x)} \frac{e^{-(T-t)r^2}}{r^2}\left( \psi\left({{r}\theta\cdot x^{o}+\frac{\pi}{2}}\right)\sum\limits_{k=1}^4 \cos(r\a_k \cdot x)-4  \right) dr, 
\end{align*}
\begin{align*}
G_l(x):=-\int_{2lT_0(x)}^{(2l+2)T_0(x)} \frac{e^{-(T-t)r^2}}{r^2} \left(\psi\left( r \theta\cdot x^{o}\right)\sum\limits_{k=1}^4 \sin(r\a_k \cdot x) \right) dr. 
\end{align*}
Then we have $$u^T(t,x)=-\frac{1}{16 \sqrt{2}} \sum\limits_{l \in \mathbb{Z}}(F_l(x)+G_l(x))+\frac{1}{4}\sum_{k=1}^4x_k+\frac{1}{2} \sqrt{\frac{(T-t)\pi}{2} }.$$
Noting that $\sum\limits_{k=1}^4 \cos(r \a_k \cdot x)=0$ at endpoints $r=(2l-1)T_0(x), l \in \mathbb{Z}$, the partial derivative of $F_l(x)$ is given by 
\begin{align*}
\pa^{+}_{x_i}F_l(x)=&-\int_{(2l-1)T_0(x)}^{(2l+1)T_0(x)} \frac{e^{-(T-t)r^2}}{r} \sum\limits_{k=1}^4 \a_{k,i}\psi\left( r \theta\cdot x^{o}+\frac{\pi}{2} \right) \sin(r \a_k \cdot x) dr \\
&-4(2l+1) \pa^{+}_{x_i}T_0(x)\frac{e^{-(T-t)(2l+1)^2T_0^2(x)}}{(2l+1)^2T_0^2(x)}+4(2l-1) \pa^{+}_{x_i}T_0(x)\frac{e^{-(T-t)(2l-1)^2T_0^2(x)}}{(2l-1)^2T_0^2(x)}, \\
\end{align*}
and similarly  
\begin{align*}
\pa^{+}_{x_i}G_l(x)=-\int_{2lT_0(x)}^{(2l+2)T_0(x)} \frac{e^{-(T-t)r^2}}{r} \sum\limits_{k=1}^4 \a_{k,i}\psi\left( r \theta\cdot x^{o} \right) \cos(r \a_k \cdot x) dr.
\end{align*}
It is well-known that summation and differentiation are interchangeable if the partial sum of derivatives converges uniformly. Since $\sum\limits_{l \in \mathbb{Z}} \pa^{+}_{x_i}F_l(x)$ and $\sum\limits_{l \in \mathbb{Z}} \pa^{+}_{x_i}G_l(x)$ converge uniformly in any bounded region of $x$, we conclude that,
\begin{align*}
\pa^{+}_{x_i}u^T(t,x)=&-\frac{1}{16 \sqrt{2}} \sum\limits_{l \in \mathbb{Z}} \left(\pa_{x_i}F_l(x)+\pa_{x_i}G_l(x)     \right) +\frac{1}{4} \notag \\
=&\frac{1}{16\sqrt{2} }\int_{-\infty}^{\infty} \frac{e^{-(T-t)r^2}}{r}\sum_{k=1}^4\a_{k,i}\left(\psi\left({{r}\theta\cdot x^{o}+\frac{\pi}{2}}\right)\sin\left({r}{\a_k \cdot x}\right) \right. \notag \\
 &\left. + \psi\left({{r}\theta\cdot x^{o}}\right)\cos\left({r}{\a_k \cdot x}\right)\right)dr+\frac{1}{4}. \notag \\
\end{align*}
We can calculate $\pa^{-}_{x_i}u^T(t,x)$ in the exactly same way, and find that it has the same expression with $\pa^{+}_{x_i}u^T(t,x)$. Therefore we proved the result \eqref{1st}.

{\it Step 2: Equation \eqref{1zero}.}
If $\theta \cdot x^{o}=0$, then all the coordinates of $x$ are equal, i.e., $x=(x_1,x_1,x_1,x_1)$. Let us compute the derivative of $u^T(t,x)$ by definition. Take  $\epsilon >0$ and denote $\big(x_1+\epsilon,x_1,x_1,x_1\big)$ simply by $x+\epsilon$. Then we have 
\begin{align*}
u^T(t,x+\epsilon)=& \frac{-1}{8\sqrt{2}}\int_{0}^{\infty} \frac{e^{-(T-t)r^2}}{r^2}\left( \psi\left({\frac{\epsilon r}{\sqrt{2}}+\frac{\pi}{2}}\right) \left( \cos\left(\frac{3\epsilon r}{\sqrt{2}}\right) +3\cos\left(\frac{3\epsilon r}{\sqrt{2}}\right) \right) -4  \right. \notag \\
& \left. - \psi\left( \frac{\epsilon r}{\sqrt{2}}\right) \left( \sin\left(\frac{3\epsilon r}{\sqrt{2}}\right)-3\sin\left(\frac{\epsilon r}{\sqrt{2}}\right)   \right) \right) dr +x_1+\frac{1}{4}\epsilon+\frac{1}{2} \sqrt{\frac{(T-t)\pi}{2} }.
\end{align*}
In order to conclude our result, it remains to show that 
\begin{align*}
0=& \lim\limits_{\epsilon \to 0} \frac{1}{\epsilon} \int_{0}^{\infty} \frac{e^{-(T-t)r^2}}{r^2}\left( \psi\left({\frac{\epsilon r}{\sqrt{2}}+\frac{\pi}{2}}\right) \left( \cos\left(\frac{3\epsilon r}{\sqrt{2}}\right) +3\cos\left(\frac{\epsilon r}{\sqrt{2}}\right) \right) -4  \right. \notag \\
& \left. - \psi\left( \frac{\epsilon r}{\sqrt{2}}\right) \left( \sin\left(\frac{3\epsilon r}{\sqrt{2}}\right)-3\sin\left(\frac{\epsilon r}{\sqrt{2}}\right)   \right) \right) dr.
\end{align*}
According to the estimation  $$\int_{\frac{1}{\sqrt{\epsilon}}}^{\infty} \frac{e^{-(T-t)r^2}}{r^2}dr \leq \epsilon \int_{\frac{1}{\sqrt{\epsilon}}}^{\infty} e^{-(T-t)r^2} dr,  $$ 
it can be seen that 
\begin{align*}
0=&\lim\limits_{\epsilon \to 0} \int_{\frac{1}{\sqrt{\epsilon}}}^{\infty} 12e^{-(T-t)r^2} dr \geq \lim\limits_{\epsilon \to 0} \frac{1}{\epsilon} \int_{\frac{1}{\sqrt{\epsilon}}}^{\infty} \frac{12e^{-(T-t)r^2}}{r^2}dr \\
\geq & \lim\limits_{\epsilon \to 0} \frac{1}{\epsilon} \int_{\frac{1}{\sqrt{\epsilon}}}^{\infty} \frac{e^{-(T-t)r^2}}{r^2}\left| \left( \psi\left({\frac{\epsilon r}{\sqrt{2}}+\frac{\pi}{2}}\right) \left( \cos\left(\frac{3\epsilon r}{\sqrt{2}}\right) +3\cos\left(\frac{\epsilon r}{\sqrt{2}}\right) \right) -4  \right. \right. \notag \\
& \left.\left. - \psi\left( \frac{\epsilon r}{\sqrt{2}}\right) \left( \sin\left(\frac{3\epsilon r}{\sqrt{2}}\right)-3\sin\left(\frac{\epsilon r}{\sqrt{2}}\right)   \right) \right) \right| dr.
\end{align*}
Now, both $\psi\left({\frac{\epsilon r}{\sqrt{2}}+\frac{\pi}{2}}\right)$ and $\psi\left( \frac{\epsilon r}{\sqrt{2}}\right)$ are positive over the interval $\left[0, \frac{1}{\sqrt{\epsilon}} \right]$. In conjunction with two equalities 
$$\cos\left(\frac{3\epsilon r}{\sqrt{2}}\right) +3\cos\left(\frac{\epsilon r}{\sqrt{2}}\right)=4 \cos^3 \left(\frac{\epsilon r}{\sqrt{2}} \right),$$
$$\sin\left(\frac{3\epsilon r}{\sqrt{2}}\right)-3\sin\left(\frac{\epsilon r}{\sqrt{2}}\right)=-4\sin^3 \left( \frac{\epsilon r}{\sqrt{2}}  \right), $$
we make the estimation 
\begin{align*}
  \bigg| \frac{1}{\epsilon} \int_0^{\frac{1}{\sqrt{\epsilon}}} \frac{e^{-(T-t)r^2}}{r^2} \Lambda(t, x+\epsilon) dr  \bigg| 
  & \leq  \bigg|   \frac{1}{\epsilon}  \int_0^{\frac{1}{\sqrt{\epsilon}}} \frac{e^{-(T-t)r^2}}{r^2}\left(4 \cos^3 \left(\frac{\epsilon r}{\sqrt{2}} \right)-4+ 4\sin^3 \left( \frac{\epsilon r}{\sqrt{2}}  \right)  \right) dr \bigg| \\
  & \leq \epsilon   \int_0^{\frac{1}{\sqrt{\epsilon}}} {e^{-(T-t)r^2}} \left( \frac{ 4-4 \cos^3 \left(\frac{\epsilon r}{\sqrt{2}} \right)      }{(\epsilon r)^2}+\frac{4\sin^3 \left( \frac{\epsilon r}{\sqrt{2}}  \right) }{(\epsilon r)^2} \right) dr .
\end{align*}
Since the integral $\int_0^{\frac{1}{\sqrt{\epsilon}}} {e^{-(T-t)r^2}} \left( \frac{ 4-4 \cos^3 \left(\frac{\epsilon r}{\sqrt{2}} \right)      }{(\epsilon r)^2}+\frac{4\sin^3 \left( \frac{\epsilon r}{\sqrt{2}}  \right) }{(\epsilon r)^2} \right) dr $ is bounded as $\epsilon \to 0$, we conclude that $$\lim\limits_{\epsilon \to 0} \frac{1}{\epsilon} \int_0^{\infty} \frac{e^{-(T-t)r^2}}{r^2} \Lambda(t, x+\epsilon) dr =0, $$
and hence $\pa_{x_1}u^T(t,x)=\frac{1}{4}$. Similarly, we can prove that  $\pa_{x_i}u^T(t,x)=\frac{1}{4}, i=2,3,4.$

{\it Step 3: Equation \eqref{2nd}.}
Define two functional series $H_l(x), K_l(x), l \in \mathbb{Z}$ by
\begin{align*}
H_l(x):=&\int_{(2l-1)T_0(x)}^{(2l+1)T_0(x)} \frac{e^{-(T-t)r^2}}{r}\sum_{k=1}^4\a_{k,i}\psi\left({{r}\theta\cdot x^{o}+\frac{\pi}{2}}\right)\sin\left({r}{\a_k \cdot x}\right)  dr,
\end{align*}
\begin{align*}
K_l(x):=\int_{2lT_0(x)}^{(2l+2)T_0(x)} \frac{e^{-(T-t)r^2}}{r}\sum_{k=1}^4\a_{k,i} \psi\left({{r}\theta\cdot x^{o}}\right)\cos\left({r}{\a_k \cdot x}\right)dr.
\end{align*}
Then we have $\pa_{x_i}u^T(t,x)=\frac{1}{4}+\frac{1}{16\sqrt{2}}\sum\limits_{l \in \mathbb{Z}} (H_l(x)+K_l(x))$. We compute the right-hand derivatives of $H_l(x), K_l(x)$, 
\begin{align}\label{le1}
\pa^{+}_{x_j} H_l(x)=&\int_{(2l-1)T_0(x)}^{(2l+1)T_0(x)} e^{-(T-t)r^2}\sum_{k=1}^4\a_{k,i} \a_{k,j} \psi\left({{r}\theta\cdot x^{o}+\frac{\pi}{2}}\right)\cos\left({r}{\a_k \cdot x}\right)  dr \notag \\
&+ \pa^{+}_{x_j} (\theta \cdot x^{o}) \frac{(-1)^le^{- \frac{(T-t)(\pi (l+1/2))^2}{(\theta\cdot x^{o})^2}}}{\theta\cdot x^{o}}\sum_{k=1}^4 2 \a_{k,i}\sin \left(\frac{\a_k \cdot x \pi(l+1/2)}{\theta\cdot x^{o}}\right)  \notag \\
&- \pa^{+}_{x_j} (\theta \cdot x^{o}) \frac{(-1)^le^{- \frac{(T-t)(\pi (l-1/2))^2}{(\theta\cdot x^{o})^2}}}{\theta\cdot x^{o}}\sum_{k=1}^4 2 \a_{k,i}\sin \left(\frac{\a_k \cdot x \pi(l-1/2)}{\theta\cdot x^{o}}\right) ,
\end{align}
\begin{align}\label{le2}
\pa^{+}_{x_j} K_l(x)=&-\int_{2lT_0(x)}^{(2l+2)T_0(x)} e^{-(T-t)r^2} \sum_{k=1}^4\a_{k,i}\a_{k,j} \psi\left({{r}\theta\cdot x^{o}}\right)\sin\left({r}{\a_k \cdot x}\right)dr \notag \\
&+ \pa^{+}_{x_j} (\theta \cdot x^{o})\frac{(-1)^{l}e^{- \frac{(T-t)(\pi (l+1))^2}{(\theta\cdot x^{o})^2}}}{\theta \cdot x^{o}}\sum_{k=1}^4 2\a_{k,i}\cos \left(\frac{\a_k \cdot x \pi (l+1)}{\theta\cdot x^{o}}\right) \notag \\
&-\pa^{+}_{x_j} (\theta \cdot x^{o})\frac{(-1)^{l}e^{- \frac{(T-t)(\pi l)^2}{(\theta\cdot x^{o})^2}}}{\theta \cdot x^{o}}\sum_{k=1}^4 2\a_{k,i}\cos \left(\frac{\a_k \cdot x \pi l}{\theta\cdot x^{o}}\right).
\end{align}
Replacing all the $\pa^{+}_{x_j}$ with $\pa^{-}_{x_j}$, we obtain the left hand side derivatives of $H_l(x)$ and $K_l(x)$. It can be easily checked that if $\theta \cdot x^{o} < 0, x^{(2)}<x^{(3)}$, the function $x \mapsto \theta \cdot x^{o}$ is differentiable, and hence $\pa^{+}_{x_j} H_l(x)=\pa^{-}_{x_j} H_l(x)$, $ \pa^{+}_{x_j} K_l(x)=\pa^{-}_{x_j}K_l(x)$. 
Since $\sum\limits_{l \in \mathbb{Z}}\pa_{x_j} H_j(x)$ and $\sum\limits_{l \in \mathbb{Z}}\pa_{x_j} H_j(x)$ converge uniformly in any bounded region of $x$, we can interchange summation and differentiation and obtain \eqref{2nd}.

{\it Step 4: Equation \eqref{3rd}.}
If $\theta \cdot x^{o}<0, x^{(2)}=x^{(3)}$, the right derivative $\pa^{+}_{x_j} (\theta \cdot x^{o})$ may not equal to the left derivative $\pa^{-}_{x_j} (\theta \cdot x^{o})$. However, by showing that for each $i=1, 2, 3, 4$, $$\sum_{k=1}^4  \a_{k,i}\sin \left(\frac{\a_k \cdot x \pi(l+1/2)}{\theta\cdot x^{o}}\right)=\sum_{k=1}^4 \a_{k,i}\cos \left(\frac{\a_k \cdot x \pi l}{\theta\cdot x^{o}}\right)=0, \quad l \in \mathbb{Z},$$ functions $H_l(x), \ K_l(x)$ are still differentiable, and hence we can conclude \eqref{3rd}. Since we need to show the equality for any $i=1,2,3,4$, we can simply assume $x=x^{o}$ without loss of generality. It can be easily checked that 
\begin{align*}
& \frac{\a_1 \cdot x^{o}}{\theta \cdot x^{o}}=\frac{\a_4 \cdot x^{o}}{\theta \cdot x^{o}}-4=\frac{3x^{(1)}-2x^{(2)}-x^{(4)}}{x^{(1)}-x^{(4)}}, \\
& \frac{\a_2 \cdot x^{o}}{\theta \cdot x^{o}}= \frac{\a_3 \cdot x^{o}}{\theta \cdot x^{o}}=\frac{2x^{(2)}-x^{(1)}-x^{(4)}}{x^{(1)}-x^{(4)}}, 
\end{align*}
and hence 
\begin{align*}
 \sin \left(\frac{\a_1 \cdot x \pi(l+1/2)}{\theta\cdot x^{o}}\right)&= \sin \left(\frac{\a_4 \cdot x \pi(l+1/2)}{\theta\cdot x^{o}}\right), \\ 
 \sin \left(\frac{\a_2 \cdot x \pi(l+1/2)}{\theta\cdot x^{o}}\right)&= \sin \left(\frac{\a_3 \cdot x \pi(l+1/2)}{\theta\cdot x^{o}}\right), \\
 \cos  \left(\frac{\a_1 \cdot x \pi l}{\theta\cdot x^{o}}\right)&= \cos  \left(\frac{\a_4 \cdot x \pi l}{\theta\cdot x^{o}}\right), \\
  \cos  \left(\frac{\a_2 \cdot x \pi l}{\theta\cdot x^{o}}\right)&= \cos  \left(\frac{\a_3 \cdot x \pi l}{\theta\cdot x^{o}}\right).
\end{align*}
We finish the proof of \eqref{3rd} by the following computation
\begin{align*}
\sum_{k=1}^4  \a_{k,i} \sin  \left(\frac{\a_k \cdot x \pi(l+1/2)}{\theta\cdot x^{o}}\right)=& \pm 2\left(\sin \left(\frac{\a_1 \cdot x \pi(l+1/2)}{\theta\cdot x^{o}}\right) -\sin \left(\frac{\a_2 \cdot x \pi(l+1/2)}{\theta\cdot x^{o}}\right)\right) \\
=& \pm4 \sin \left(\frac{2x^{(1)}-2x^{(2)}}{x^{(1)}-x^{(4)}}\pi(l+1/2)\right)\cos \left(\pi(l+1/2)\right) =0, \\
\sum_{k=1}^4 \a_{k,i}\cos  \left(\frac{\a_k \cdot x \pi l}{\theta\cdot x^{o}}\right) =& \pm 2 \left(\cos \left(\frac{\a_1 \cdot x \pi l}{\theta\cdot x^{o}}\right) -\cos \left(\frac{\a_2 \cdot x \pi l}{\theta\cdot x^{o}}\right)\right) \\
=& \pm4 \sin \left(\frac{2x^{(2)}-2x^{(1)}}{x^{(1)}-x^{(4)}}\pi l\right) \sin \left(\pi l \right)=0.
\end{align*}

{\it Step 5: Equation \eqref{2zero}.}
Finally, supposing $x=(x_1, x_1, x_1, x_1)$ and $x+\epsilon_j=(x_1+\epsilon)\mathbbm{1}_{\{k=j\}}+x_1\mathbbm{1}_{\{k \not =j\}} $, we calculate $\partial^2_{x_ix_j}u^T(t,x)$. According to \eqref{1st}, we have 
\begin{align*}
\pa_{x_i}u^T(t,x+\epsilon_j) = &\frac{1}{8\sqrt{2} }\int_{0}^{\infty} \frac{e^{-(T-t)r^2}}{r}\sum_{k=1}^4\a_{k,i}\left(\psi\left({ \frac{\epsilon r}{\sqrt{2}}+\frac{\pi}{2}}\right)\sin\left({r}{\a_k \cdot (x+\epsilon_j) }\right) \right. \notag \\
 &\left. + \psi\left( \frac{\epsilon r}{\sqrt{2}}\right)\cos\left({r}{\a_k \cdot (x+\epsilon_j)}\right)\right)dr+\frac{1}{4}. \notag 
\end{align*}
As a result of the equalities
\begin{align*}
\int_{\frac{1}{\sqrt{\epsilon}}}^{\infty} \frac{e^{-(T-t)r^2}}{r}dr=-\int_{\frac{1}{\sqrt{\epsilon}}}^{\infty} \frac{1}{2(T-t)r^2} d e^{-(T-t)r^2}=  \frac{\epsilon e^{- \frac{(T-t)}{ \epsilon}}}{2(T-t)}+\int_{\frac{1}{\sqrt{\epsilon}}}^{\infty} \frac{e^{-(T-t)r^2}}{4(T-t) r^3} dr, 
\end{align*}
we deduce that 
\begin{align}\label{2nd1}
0 =& \lim\limits_{\epsilon \to 0} \left( \frac{ e^{- \frac{(T-t)}{ \epsilon}}}{2(T-t)}+\int_{\frac{1}{\sqrt{\epsilon}}}^{\infty} \frac{e^{-(T-t)r^2}}{4(T-t) r}\right)6 \sqrt{2}  \geq \lim\limits_{\epsilon \to 0} \frac{6\sqrt{2}}{\epsilon} \left(  \frac{\epsilon e^{- \frac{(T-t)}{ \epsilon}}}{2(T-t)}+\int_{\frac{1}{\sqrt{\epsilon}}}^{\infty} \frac{e^{-(T-t)r^2}}{4(T-t) r^3}\right) \notag \\
\geq & \lim\limits_{\epsilon \to 0}  \frac{1}{\epsilon}  \int_{\frac{1}{\sqrt{\epsilon}}}^{\infty} \frac{e^{-(T-t)r^2}}{r}\left|\sum_{k=1}^4\a_{k,i}\left(\psi\left({ \frac{\epsilon r}{\sqrt{2}}+\frac{\pi}{2}}\right)\sin\left({r}{\a_k \cdot (x+\epsilon_j) }\right) \right. \right.\notag \\
 &\left. \left.+ \psi\left( \frac{\epsilon r}{\sqrt{2}}\right)\cos\left({r}{\a_k \cdot (x+\epsilon_j)}\right)\right)\right| dr.
\end{align}
From the equality 
\begin{align*}
\sum\limits_{k=1}^4 \a_{k,i}\cos(r \a_k \cdot (x+\epsilon_j))=&\mathbbm{1}_{\{i=j\}} \left(3\cos\left( \frac{3\epsilon r}{\sqrt{2}}\right)-3\cos\left( \frac{\epsilon r}{\sqrt{2}}\right) \right) \\
&+\mathbbm{1}_{\{i \not=j\}} \left(\cos\left( \frac{\epsilon r}{\sqrt{2}}\right) -\cos\left( \frac{3\epsilon r}{\sqrt{2}}\right)\right),
\end{align*}
it can be easily seen that 
\begin{align}\label{2nd2}
\lim\limits_{\epsilon \to 0} \frac{1}{r \epsilon} \sum\limits_{k=1}^4 \a_{k,i}\cos(r \a_k \cdot (x+\epsilon_j))=0.
\end{align}
Combining \eqref{2nd1} and \eqref{2nd2}, we conclude that 
\begin{align*}
\pa^2_{x_ix_j}&u^T(t,x)= \lim\limits_{\epsilon \to 0} \frac{1}{8\sqrt{2}\epsilon} \int_0^{\frac{1}{\sqrt{\epsilon}} }\frac{e^{-(T-t)r^2}}{r}\sum_{k=1}^4\a_{k,i}\left(\sin\left({r}{\a_k \cdot (x+\epsilon_j) }\right) +\cos\left({r}{\a_k \cdot (x+\epsilon_j)}\right)\right)dr \notag \\
=& \frac{1}{8\sqrt{2}} \lim\limits_{\epsilon \to 0}\int_0^{\frac{1}{\sqrt{\epsilon}} } e^{-(T-t)r^2}\left( \sum_{k=1}^4\a_{k,i}  \frac{\sin\left(r \a_k \cdot (x+\epsilon_j) \right) }{r \epsilon}+ \frac{1}{r \epsilon} \sum\limits_{k=1}^4 \a_{k,i}\cos(r \a_k \cdot (x+\epsilon_j)) \right) dr \\
=&\frac{1}{8\sqrt{2}}\int_0^{\infty} e^{-(T-t)r^2} \left( \sum\limits_{k=1}^4 \a_{k,i} \lim\limits_{\epsilon \to 0}   \frac{\sin\left(r \a_k \cdot (x+\epsilon_j) \right) }{r \epsilon} \right. \\
&+    \left.             \lim\limits_{\epsilon \to 0} \frac{1}{r \epsilon} \sum\limits_{k=1}^4 \a_{k,i}\cos(r \a_k \cdot (x+\epsilon_j)) \right) dr \\
=&\frac{1}{8\sqrt{2}} \int_0^{\infty} e^{-(T-t)r^2} \left( \sum\limits_{k=1}^4 \a_{k,i} \a_{k,j} \right) dr. 
\end{align*}
Since $\pa_{x_j}H_l(x), \pa_{x_j}K_l(x)$ defined in {\it Step 3} are continuous with respect to $x$,  and both series $\sum\limits_{l \in \mathbb{Z}} \pa_{x_j}H_l(x), \sum\limits_{l \in \mathbb{Z}} \pa_{x_j}K_l(x)$ converge uniformly in any bounded region of $x$, the second derivative $\pa^2_{x_ix_j}u^T(t,x)$ is also continuous, and hence we have proved that $u^T(t,x)$ is in $C^2([0,T)\times \mathbb{R}^4)$. 
\end{proof}

\subsubsection{Solution Property}\label{sss:property}
\begin{proof}
Supposing that  $\{i_1, i_2, i_3, i_4\}=\{1,2,3,4\}$ are subscripts such that $x_{i_1} \leq x_{i_2} \leq x_{i_3} \leq x_{i_4}$, we prove the equation $$\pa_t u^T(t,x) +\frac{1}{2} \left( \pa_{x_{i_2}}+\pa_{x_{i_4}} \right)^2 u^T(t,x)=0 .$$
Taking derivative with respect to $t$, we obtain that
\begin{align}\label{eq:pa_t}
\pa_t u^T(t,x)=&\frac{-1}{16\sqrt{2} }\int_{-\infty}^{\infty} e^{-(T-t)r^2}\left(\psi\left({{r}\theta\cdot x^{o}+\frac{\pi}{2}}\right)\sum_{k=1}^4 \cos\left({r}{\a_k \cdot x}\right)  \right. \notag \\
 &\left. - \psi\left({{r}\theta\cdot x^{o}}\right)\sum\limits_{k=1}^4\sin\left({r}{\a_k \cdot x}\right)\right)dr+\frac{1}{4\sqrt{2}}\int_{-\infty}^{\infty} e^{-(T-t)r^2} dr - \frac{1}{4} \sqrt{\frac{\pi}{2(T-t)}} \notag \\
 =&\frac{-1}{16\sqrt{2} }\int_{-\infty}^{\infty} e^{-(T-t)r^2}\left(\psi\left({{r}\theta\cdot x^{o}+\frac{\pi}{2}}\right)\sum_{k=1}^4 \cos\left({r}{\a_k \cdot x}\right)  \right. \notag \\
 &\left. - \psi\left({{r}\theta\cdot x^{o}}\right)\sum\limits_{k=1}^4\sin\left({r}{\a_k \cdot x}\right)\right)dr.
\end{align}
According to \eqref{2nd} and the equality $\pa_{x_{i_2}}\left( \theta \cdot x^{o} \right)+\pa_{x_{i_4}}\left( \theta \cdot x^{o} \right)=0$, the series part cancels out and we have
\begin{align*}
\left(\pa_{x_{i_2}}+\pa_{x_{i_4}}  \right)^2 u^T(t,x)=& \frac{1}{16\sqrt{2} }\int_{-\infty}^{\infty} {e^{-(T-t)r^2}}\sum_{k=1}^4(\a_{k,i_2}+\a_{k,i_4})^2 \left(\psi\left({{r}\theta\cdot x^{o}+\frac{\pi}{2}}\right)\cos\left({r}{\a_k \cdot x}\right) \right. \notag \\
& \left. - \psi\left({{r}\theta\cdot x^{o}}\right)\sin\left({r}{\a_k \cdot x}\right)\right)dr.\notag
\end{align*}
Since $(\a_{k,i_2}+\a_{k,i_4})^2=2$ for every $k=1, 2, 3, 4$, we conclude that 
\begin{align}
\frac{1}{2}\left(\pa_{x_{i_2}}+\pa_{x_{i_4}}  \right)^2 u^T(t,x)=& \frac{1}{16\sqrt{2} }\int_{-\infty}^{\infty} {e^{-tr^2}}\sum_{k=1}^4 \left(\psi\left({{r}\theta\cdot x^{o}+\frac{\pi}{2}}\right)\cos\left({r}{\a_k \cdot x}\right) \right. \notag \\
& \left. - \psi\left({{r}\theta\cdot x^{o}}\right)\sin\left({r}{\a_k \cdot x}\right)\right)dr=-\pa_t u^T(t,x). \label{eq:0101}
\end{align}
\end{proof}

\subsection{Proof of Theorem \ref{thm:optimality}}\label{ss:proof2}
\begin{proof} 
By using arguments similar to the proof of \eqref{eq:0101}, we have for $(j,k)=(1,3),(1,4),(2,3)$,
\begin{align*}
\pa_t u^T(t,x) +\frac{1}{2} \left( \pa_{x_{i_j}}+\pa_{x_{i_k}} \right)^2 u^T(t,x)=0.
\end{align*}
From \eqref{1st}, we obtain that $\sum\limits_{k=1}^4 \pa_{x_k} u^T(t,x)=1$, which implies $$\pa^2_{xx} u^T(t,x) \left(e_1+e_2+e_3+e_4 \right)=0.$$
Subsequently, for all $J \in P(4)$, we have that 
\begin{align*}
e_J^\top \pa^2_{xx} u^T(t,x) e_J - e_{J^c}^\top \pa^2_{xx} u^T(t,x) e_{J^c}=\left( e_J^\top -e_{J^c}^\top \right) \pa_{xx}^2 u^T(t,x) \left(e_J+e_{J^c} \right)=0.
\end{align*}
Therefore, it remains to show that the strategies $J \in \{\emptyset, \{i_1, i_2\}, \{i_1\},\{i_2\},\{i_3\},\{i_4\}\}$ are suboptimal, i.e., 
\begin{align*}
\pa_t u^T(t,x)+\frac{1}{2}\sup_{J \in P(N)}e_{J}^\top \pa^2_{xx} u^T(t,x)e_J \leq 0. 
\end{align*}
Since the second derivatives of $u^T(t,x)$ are continuous, we assume that $\theta \cdot x^o <0, x^{(2)} < x^{(3)}$ without loss of generality. First we introduce some notations, and simplify the expressions for $\pa_t u^T(t,x), \pa^2_{xx} u^T(t,x)$. Define 
\begin{align*}
S_k:=& \sqrt{T-t} \int_{-\infty}^{\infty} {e^{-(T-t)r^2}}\left(\psi\left({{r}\theta\cdot x^{o}+\frac{\pi}{2}}\right)\cos\left({r}{\a_k \cdot x^o}\right)  - \psi\left({{r}\theta\cdot x^{o}}\right)\sin\left({r}{\a_k \cdot x^o}\right)\right)dr, \\
L_k:=&\sqrt{T-t}\left( \sum_{l\in \mathbb{Z}}\frac{(-1)^le^{- \frac{(T-t)(\pi (l+1/2))^2}{(\theta\cdot x^{o})^2}}}{\theta\cdot x^{o}}\sin \left(\frac{\a_k \cdot x^{o} \pi(l+1/2)}{\theta\cdot x^{o}}\right)\right. \\
& \left. -\sum_{l\in \mathbb{Z}}\frac{(-1)^le^{- \frac{(T-t)(\pi l)^2}{(\theta\cdot x^{o})^2}}}{\theta \cdot x^{o}}\cos \left(\frac{\a_k \cdot x^{o} \pi l}{\theta\cdot x^{o}}\right) \right).  
\end{align*}
According to \eqref{eq:pa_t} and  \eqref{2nd}, it can be checked that 
\begin{align}
&\pa_t u^T(t,x)=\frac{-1}{16\sqrt{2(T-t)}}\sum\limits_{k=1}^4 S_k,  \label{eq:1expression}\\
& \pa^2_{x_{i_h}x_{i_j}} u^T(t,x)=\frac{1}{16\sqrt{2(T-t)}} \left(\sum\limits_{k=1}^4 \a_{k, h}\a_{k,j} S_k+2\sum\limits_{k=1}^4 \pa_{x_{i_j}}(\theta \cdot x^o) \a_{k,h}L_k \right),\label{eq:2expression}
\end{align}
where we use the fact that the $i_h$-th coordinate of $x$ is the $h$-th coordinate of $x^o$.

 Define $\tilde{T}:=-\frac{\sqrt{T-t}\pi}{2\theta \cdot x^o}$, $\beta_k:=\frac{\a_k \cdot x^{o}}{2\pi\sqrt{T-t}}, k=1,2,3,4$, and  
$$f(r)=e^{-r^2}, \quad F_k^1(r)=f(r)\cos(2\pi \beta_k r), \quad F_k^2(r)=f(r)\sin(2 \pi \beta_k r).$$ Their Fourier transforms are given respectively by 
\begin{align*}
\hat{f}(v)&:=\int_{-\infty}^{\infty} f(x) e^{ -2\pi i vx} dx= \sqrt{\pi} e^{-\pi v^2}, \\
\hat{F_k^1}(v)&:=\frac{\hat{f}\left(v-\beta_k \right)+\hat{f}\left(v+\beta_k \right) }{2}, \\
\hat{F_k^2}(v)&:=\frac{\hat{f}\left(v-\beta_k \right)-\hat{f}\left(v+\beta_k \right) }{2i}.
\end{align*}

By change of variables, we obtain
\begin{align*}
S_k=&  \int_{-\infty}^{\infty} {e^{-r^2}}\left(\psi\left({{r} \frac{\theta\cdot x^{o}}{\sqrt{T-t}}+\frac{\pi}{2}}\right)\cos\left({r} \frac{{\a_k \cdot x}}{\sqrt{T-t}}\right)  - \psi\left({{r}\frac{\theta\cdot x^{o}}{\sqrt{T-t}}}\right)\sin\left(r \frac{\a_k \cdot x}{\sqrt{T-t}}\right)\right)dr \\
=& \sum_{l \in \mathbb{Z}} (-1)^l \int_{(2l-1)\tilde{T}}^{(2l+1)\tilde{T}} F_k^1(r) dr  + \sum_{l \in \mathbb{Z}} (-1)^l \int_{2l \tilde{T}}^{(2l+2) \tilde{T}} F_k^2(r) dr. \\ 
\end{align*}
Since the functions $F_k^1$ are even and  $F_k^2$ are odd, we obtain that
\begin{align}\label{eq:defS}
 S_k= \int_{-\infty}^{\infty} F_k^1(r) dr - 2 \sum\limits_{l \in \mathbb{Z}} \int_{\tilde{T}}^{3\tilde{T}} F_k^1(4l\tilde{T}+r) dr + 2 \sum\limits_{l \in \mathbb{Z}} \int_{0}^{2\tilde{T}} F_k^2(4l\tilde{T}+r) dr . 
\end{align}
Also it can be seen that 
\begin{align}\label{eq:defL}
L_k=\frac{2\tilde{T}}{\pi}\left(\sum\limits_{l \in \mathbb{Z}} (-1)^l F_k^2((2l+1)\tilde{T})+\sum\limits_{l \in \mathbb{Z}} (-1)^l F_k^1(2l\tilde{T}) \right).
\end{align}

{\it Step 1: $J=\{i_1,i_2\}$.}
We prove the inequality
\begin{align}\label{1100}
\pa_t u^T(t,x) +\frac{1}{2} \left( \pa_{x_{i_1}}+\pa_{x_{i_2}} \right)^2 u(t,x) \leq 0 .
\end{align}
According to trigonometric formulas, we have the following equalities 
$$\sin \left(\frac{\a_1 \cdot x^o \pi(l+1/2)}{\theta\cdot x^{o}}\right)=\sin \left(\frac{\a_2 \cdot x^o \pi(l+1/2)}{\theta\cdot x^{o}}\right),$$ 
$$\sin \left(\frac{\a_3 \cdot x^o \pi(l+1/2)}{\theta\cdot x^{o}}\right)=\sin \left(\frac{\a_4 \cdot x^o \pi(l+1/2)}{\theta\cdot x^{o}}\right), $$
$$\cos \left(\frac{\a_1 \cdot x^o \pi l}{\theta\cdot x^{o}}\right)=\cos \left(\frac{\a_2 \cdot x^o \pi l}{\theta\cdot x^{o}}\right),$$
$$\cos \left(\frac{\a_3 \cdot x^o \pi l}{\theta\cdot x^{o}}\right)=\cos \left(\frac{\a_4 \cdot x^o \pi l}{\theta\cdot x^{o}}\right).$$
Therefore we have $L_1=L_2, L_3=L_4$.
Plugging in \eqref{eq:1expression}, \eqref{eq:2expression} and noting that $\pa_{x_{i_1}} (\theta \cdot x^{o})=\pa_{x_{i_2}} (\theta \cdot x^{o})=\frac{1}{\sqrt{2}}$, it can be checked that 
\begin{align*}
\pa_t u^T(t,x) + \frac{1}{2} \left( \pa_{x_{i_1}}+\pa_{x_{i_2}} \right)^2 u(t,x) =& \frac{1}{16\sqrt{2(T-t)}}\left(\sum\limits_{k=1}^4\left( \frac{1}{2}(\a_{k,1}+\a_{k,2})^2-1  \right)S_k \right) \\
& + \frac{1}{16\sqrt{2(T-t)}}(4L_1-4L_4)\\
=& \frac{1}{4\sqrt{2(T-t)}}(L_1-L_4).  
\end{align*}
Let us introduce $$\mu:= \frac{\pi \a_{1} \cdot x^o}{4 \theta \cdot x^{o}}+\frac{\pi}{4}, \ \ \ \nu:= \frac{\pi\a_{4} \cdot x^o}{4 \theta \cdot x^{o}}+\frac{\pi}{4}, \ \ \ \hat{\tau}:=\frac{(T-t)i\pi}{4 \left(\theta \cdot x^{o} \right)^2},$$
and the Jacobi-theta function $$\theta_3\left( z,\tau \right):=\sum\limits_{l=-\infty}^{\infty} \exp\left(\pi i l^2 \tau +2 i lz \right). $$
We rewrite sine and cosine terms as $$(-1)^l \sin \left(\frac{\a_{1} \cdot x^o \pi(l+1/2)}{\theta\cdot x^{o}}\right)=-Re \ e^{2 i (2l+1) \mu} ,$$
$$-(-1)^l \cos \left(\frac{\a_{1} \cdot x \pi l}{\theta\cdot x^{o}}\right)=- Re \ e^{4 i l \mu}. $$
Note that $\exp(\pi i l^2 \hat{\tau})=e^{-\frac{(T-t) (\pi l)^2 }{4(\theta \cdot x^o)^2}} $. Then according to the definition of $L_1$, we obtain that
\begin{align*}
\frac{L_1}{\sqrt{T-t}}=-\frac{Re \ \theta_3(\mu, \hat{\tau})}{\theta \cdot x^o}=-\frac{  \theta_3(\mu, \hat{\tau})}{\theta \cdot x^o},
\end{align*}
and 
\begin{align*}
\frac{L_4}{\sqrt{T-t}}=-\frac{  \theta_3(\nu, \hat{\tau})}{\theta \cdot x^o}.
\end{align*}
Therefore, we obtain 
\begin{align}\label{eq:Jacobi}
\pa_t u^T(t,x) + \frac{1}{2} \left( \pa_{x_{i_1}}+\pa_{x_{i_2}} \right)^2 u(t,x) =-\frac{1}{4\sqrt{2}}\frac{\theta_3(\mu, \hat{\tau})-\theta_3(\nu, \hat{\tau})}{\theta \cdot x^o},
\end{align}
and \eqref{1100} is equivalent to 
\begin{equation}\label{1100''}
\theta_3\left( \mu,\hat{\tau}\right)-\theta_3\left(\nu,\hat{\tau}\right) \leq 0.
\end{equation}
Taking $q=e^{i\pi \hat{\tau}} $, we have the infinite product representation for the Jacobi-theta function (see e.g. \cite{MR2723248})
\begin{align}
\theta_3\left(z,\hat{\tau}\right)=\prod\limits_{l=1}^{\infty} \left(1-q^{2l}\right)\left(1+2q^{2l-1} \cos(2  z) +q^{4l-2}\right).  \label{eq:inftyproduct}
\end{align}
By the definition of $\mu, \nu$, it can be easily checked that 
\begin{align*}
&\mu -\nu=\frac{\pi\left( x^{(1)}-x^{(4)}\right) }{x^{(1)}+x^{(2)}-x^{(3)}-x^{(4)} } \in (0, \pi), \\
&\mu + \nu =\frac{\pi \left( x^{(1)}-x^{(2)}-x^{(3)}+x^{(4)}\right) }{2\left(x^{(1)}+x^{(2)}-x^{(3)}-x^{(4)} \right)}+\frac{\pi}{2} \in (0,\pi), \\
\end{align*}
and hence $dist( \mu, \mathbb{Z}\pi)> dist(\nu, \mathbb{Z}\pi)$. Subsequently, we have $\cos( 2 \mu) \leq \cos(2  \nu)$, and therefore conclude \eqref{1100''} by \eqref{eq:inftyproduct}.

{\it Step 2: $J=\emptyset$.} According to the Poisson summation formula for Fourier transform (see e.g. \cite{MR1970295}), it can be seen that 
\begin{align*}
\sum\limits_{l \in \mathbb{Z}} F_k^1(4l\tilde{T}+r)  =\sum\limits_{l \in \mathbb{Z}} \frac{1}{4\tilde{T}} \hat{F_k^1}\left(\frac{l}{4\tilde{T}}\right)e^{ i2\pi \frac{l}{4\tilde{T}} r }, \\
\sum\limits_{l \in \mathbb{Z}} F_k^2(4l\tilde{T}+r)  =\sum\limits_{l \in \mathbb{Z}} \frac{1}{4\tilde{T}} \hat{F_k^2}\left(\frac{l}{4\tilde{T}}\right)e^{ i2\pi \frac{l}{4\tilde{T}} r } \ .
\end{align*}
Then according to \eqref{eq:defS},
\begin{align*}
S_k=& \hat{F_k^1}(0)-\frac{1}{2\tilde{T}} \sum\limits_{l \in \mathbb{Z}} \hat{F_k^1}\left(\frac{l}{4T} \right) \int_{\tilde{T}}^{3\tilde{T}} e^{i 2\pi \frac{l}{4\tilde{T}} r} dr +\frac{1}{2\tilde{T}} \sum\limits_{l \in \mathbb{Z}} \hat{F_k^2}\left(\frac{l}{4\tilde{T}} \right) \int_0^{2\tilde{T}} e^{i 2\pi \frac{l}{4\tilde{T}} r} dr \notag \\
=&\sum\limits_{l \in \mathbb{Z}} (-1)^l  \hat{F_k^1 }\left( \frac{2l+1}{4\tilde{T}}  \right)\frac{2}{(2l+1)\pi}-\sum\limits_{l \in \mathbb{Z}}   \hat{F_k^2 }\left( \frac{2l+1}{4\tilde{T}}  \right)\frac{2}{i(2l+1)\pi}  \notag \\
=&\sum\limits_{l \in \mathbb{Z}} (-1)^l\hat{f} \left(\frac{2l+1}{4\tilde{T}}+(-1)^{l+1} \beta_k \right) \frac{2}{(2l+1)\pi} \ .
\end{align*}
By direct computation, 
\[-\frac{1}{2\tilde{T}}\hat{F_k^1}\left(\frac{l}{4T} \right) \int_{\tilde{T}}^{3\tilde{T}} e^{i 2\pi \frac{l}{4\tilde{T}} r} dr =
\begin{cases}
\hat{F_k^1}(0), \quad &\text{if } l=0, \\
(-1)^{(l-1)/2} \hat{F_k^1 }\left( \frac{l}{4\tilde{T}}  \right)\frac{2}{l\pi}, \quad &\text{if } l \text{ is odd}, \\
0, \quad &\text{if } l \text{ is even},
\end{cases}
\]
\[\frac{1}{2\tilde{T}} \hat{F_k^2}\left(\frac{l}{4\tilde{T}} \right) \int_0^{2\tilde{T}} e^{i 2\pi \frac{l}{4\tilde{T}} r} dr =
\begin{cases}
-\hat{F_k^2 }\left( \frac{l}{4\tilde{T}}  \right)\frac{2}{il\pi}, \quad &\text{if } l \text{ is odd}, \\
0, \quad &\text{if } l \text{ is even}. 
\end{cases}
\]
Therefore, we obtain that 
\begin{align}\label{lastS}
S_k=&\sum\limits_{l \in \mathbb{Z}} (-1)^l  \hat{F_k^1 }\left( \frac{2l+1}{4\tilde{T}}  \right)\frac{2}{(2l+1)\pi}-\sum\limits_{l \in \mathbb{Z}}   \hat{F_k^2 }\left( \frac{2l+1}{4\tilde{T}}  \right)\frac{2}{i(2l+1)\pi}  \notag \\
=&\sum\limits_{l \in \mathbb{Z}} (-1)^l\hat{f} \left(\frac{2l+1+(-1)^{l+1} 4 \tilde{T}\beta_k}{4\tilde{T}} \right) \frac{2}{(2l+1)\pi} \notag \\
=&2\sum\limits_{l \geq 0} (-1)^l\hat{f} \left(\frac{2l+1+(-1)^{l+1} 4 \tilde{T}\beta_k}{4\tilde{T}} \right) \frac{2}{(2l+1)\pi},
\end{align}
where the last equation follows from the identity $$(-1)^l\hat{f} \left(\frac{2l+1+(-1)^{l+1} 4 \tilde{T}\beta_k}{4\tilde{T}} \right) \frac{2}{(2l+1)\pi}=(-1)^{-l-1}\hat{f} \left(\frac{-2l-1+(-1)^{-l} 4 \tilde{T}\beta_k}{4\tilde{T}} \right) \frac{2}{(-2l-1)\pi}.$$

Denoting 
\begin{align*}
&\eta_1:=4\tilde{T}\beta_1=-\frac{\a_1 \cdot x^o}{\theta \cdot x^o}=\frac{ 3x^{(1)}-x^{(2)}-x^{(3)}-x^{(4)} }{x^{(4)}+x^{(3)}-x^{(2)} -x^{(1)}} , \\
&\eta_4:=4\tilde{T}\beta_4=-\frac{\a_4 \cdot x^o}{ \theta \cdot x^o}=\frac{ -x^{(1)}-x^{(2)}-x^{(3)}+3x^{(4)} }{ x^{(4)}+x^{(3)}-x^{(2)} -x^{(1)}},
\end{align*}
it can be easily checked that they satisfy the constraints 
\begin{equation}\label{constraint}
\eta_1 \in [-3,-1], \ \ \ \eta_4 \in [1,3], \ \ \  \eta_4-\eta_1 \leq 4.
\end{equation}

Since $\pa_t u^T(t,x)=\frac{-1}{16\sqrt{2(T-t)}}\sum\limits_{k=1}^4 S_k$, the inequality 
\begin{align}
\pa_t u^T(t,x) \leq 0.
\end{align}
is equivalent to $\sum\limits_{k=1}^4 S_k \geq 0$. Due to definitions of $\tilde{T}$ and $\beta_k$,  we have that $4\tilde{T}\beta_1+4\tilde{T}\beta_2=-2, 4\tilde{T}\beta_3+4\tilde{T}\beta_4=2$. Therefore we obtain the equations 
\begin{align}\label{sum1}
S_1+S_2 =& 2 \sum\limits_{l \geq 0} (-1)^l\hat{f} \left(\frac{2l+1+(-1)^{l+1} \eta_1 }{4\tilde{T}}\right) \frac{2}{(2l+1)\pi}  \notag \\
& +2\sum\limits_{l \geq 0} (-1)^l \hat{f} \left(\frac{2l+1+(-1)^{l+1}(-2- \eta_1) }{4\tilde{T}}\right) \frac{2}{(2l+1)\pi} \notag \\
=&2\sum\limits_{l \geq 0} \left(\frac{2}{(4l+1)\pi}-\frac{2}{(4l+3)\pi} \right)\left( \hat{f}\left(\frac{4l+1-\eta_1}{4\tilde{T}} \right)+\hat{f}\left( \frac{4l+3+\eta_1}{4\tilde{T}} \right)  \right)  ,
\end{align}
\begin{align}\label{sum2}
 S_3+S_4 = & 2\sum\limits_{l \geq 0} (-1)^l\hat{f} \left(\frac{2l+1+(-1)^{l+1} \eta_4 }{4\tilde{T}}\right) \frac{2}{(2l+1)\pi}  \notag \\
& +2\sum\limits_{l \geq 0} (-1)^l \hat{f} \left(\frac{2l+1+(-1)^{l+1}(2- \eta_4) }{4\tilde{T}}\right) \frac{2}{(2l+1)\pi} \notag \\ 
=& -2\sum\limits_{l \geq 0}\left(\frac{2}{(4l-1)\pi}-\frac{2}{(4l+1)\pi} \right)\left( \hat{f}\left(\frac{4l-1+\eta_4}{4\tilde{T}} \right)+\hat{f}\left( \frac{4l+1-\eta_4}{4\tilde{T}} \right)  \right) .
\end{align}
It is obvious that $S_1+S_2 \geq 0$.  As a result of \eqref{constraint}, we obtain that $0 \leq -1+\eta_4 \leq 5-\eta_4 \leq 3+\eta_4 $, and hence the inequalities 
$$\hat{f} \left(-1+\eta_4 \right) \geq \hat{f}\left(5-\eta_4 \right)  \geq \hat{f}\left(3+\eta_4 \right).$$
Noting that $2\sum\limits_{l=1}^{\infty}\left(\frac{2}{(4l-1)\pi}-\frac{2}{(4l+1)\pi} \right)=\frac{4-\pi}{\pi}<\frac{4}{\pi}$, we get that
\begin{align*}
S_3+S_4=&  \frac{8}{\pi}\hat{f} \left(\frac{-1+\eta_4 }{4\tilde{T}}\right)  \\
&  -2\sum\limits_{l=1}^{\infty}\left(\frac{2}{(4l-1)\pi}-\frac{2}{(4l+1)\pi} \right)\left( \hat{f}\left(\frac{4l-1+\eta_4}{4\tilde{T}} \right)+\hat{f}\left( \frac{4l+1-\eta_4}{4\tilde{T}} \right)  \right) \\
\geq &  \frac{8}{\pi}\hat{f} \left(\frac{-1+\eta_4 }{4\tilde{T}}\right)-\frac{(4-\pi)}{\pi}\left(\hat{f} \left(\frac{3+\eta_4 }{4\tilde{T}}\right)+\hat{f} \left(\frac{5-\eta_4 }{4\tilde{T}}\right) \right) \geq 0,
\end{align*}
and hence $\sum\limits_{k=1}^4 S_k \geq S_3+S_4 \geq 0.$

{\it   Step 3: $J=\{i_h\}$.}
In the end, we prove the following inequality for each $h=1,2,3,4$, 
\begin{align}\label{1000}
-\pa_t u(t,x) +\frac{1}{2} \pa^2_{x_{i_h}x_{i_h}} u(t,x) \leq 0.
\end{align}
Recalling in \eqref{eq:defL}, we have 
$$ L_k=\frac{2\tilde{T}}{\pi}\left(\sum\limits_{l \in \mathbb{Z}} (-1)^l F_k^2((2l+1)\tilde{T})+\sum\limits_{l \in \mathbb{Z}} (-1)^l F_k^1(2l\tilde{T}) \right).$$
Applying Poisson summation formula, we obtain that 
\begin{align*}
\sum\limits_{l \in \mathbb{Z}} (-1)^l F_k^2((2l+1)\tilde{T})=&2\sum\limits_{l \in \mathbb{Z}} F_k^2((4l+1)\tilde{T})=\sum\limits_{l \in \mathbb{Z}} \frac{1}{2\tilde{T}} \hat{F_k^2}\left(\frac{l}{4\tilde{T}}\right)e^{ i\frac{\pi l}{2}}\\
=&\sum\limits_{l \in \mathbb{Z}} \frac{1}{2\tilde{T}} \hat{F_k^2}\left(\frac{2l+1}{4\tilde{T}}\right)e^{ i\frac{\pi (2l+1)}{2}}  \\
=& \frac{1}{4\tilde{T}}\sum\limits_{l \in \mathbb{Z}} (-1)^l  \left(\hat{f}\left(\frac{2l+1}{4\tilde{T}}-\beta_k \right)-\hat{f}\left(\frac{2l+1}{4\tilde{T}}+\beta_k \right) \right),
\end{align*}
\begin{align*}
\sum\limits_{l \in \mathbb{Z}} (-1)^l F_k^1(2l\tilde{T})&=-\sum\limits_{l \in \mathbb{Z}} F_k^1(2l\tilde{T})+2\sum\limits_{l \in \mathbb{Z}} F_k^1(4l\tilde{T})\\
&=-\sum\limits_{l \in \mathbb{Z}} \frac{1}{2\tilde{T}}\hat{F_k^1}\left(\frac{l}{2\tilde{T}}\right)+\sum\limits_{l \in \mathbb{Z}} \frac{1}{2\tilde{T}}\hat{F_k^1}\left(\frac{l}{4\tilde{T}}\right)\\
&=\frac{1}{4\tilde{T}}\sum\limits_{l \in \mathbb{Z}}  \left(\hat{f}\left(\frac{2l+1}{4\tilde{T}}-\beta_k \right)+\hat{f}\left(\frac{2l+1}{4\tilde{T}}+\beta_k \right) \right),
\end{align*}
and therefore 
\begin{align}\label{lastL}
L_k=\frac{1}{\pi}\sum\limits_{l \in \mathbb{Z}} \hat{f}\left(\frac{2l+1+(-1)^{l+1}4 \tilde{T}\beta_k}{4\tilde{T}} \right)=\frac{2}{\pi}\sum\limits_{l \geq 0} \hat{f}\left(\frac{2l+1+(-1)^{l+1}4\tilde{T} \beta_k }{4\tilde{T}}\right).
\end{align}

We first prove the following three inequalities by direct computation.
\begin{equation*}
S_1 \leq S_2, \ \ \  S_3 \leq  S_4, \ \ \ S_2 \leq S_4.
\end{equation*}
To prove the first inequality we write
\begin{align*}
S_2-S_1=& \sum\limits_{l \geq 0} (-1)^l \frac{4}{(2l+1)\pi} \left( \hat{f}\left(\frac{2l+1+(-1)^{l+1}(-2-\eta_1)}{4\tilde{T}} \right)- \hat{f}\left(\frac{2l+1+(-1)^{l+1}\eta_1}{4\tilde{T}} \right) \right) \\
=&\sum\limits_{l \geq 0} \left(\frac{4}{(4l+1)\pi}+\frac{4}{(4l+3)\pi} \right)\left( \hat{f}\left(\frac{4l+3+\eta_1}{4\tilde{T}} \right)- \hat{f}\left(\frac{4l+1-\eta_1}{4\tilde{T}} \right) \right).
\end{align*}
As a result of $0\leq 4l + 3+ \eta_1 \leq 4l+1-\eta_1$, we have for  every $l \geq 0$, $$ \hat{f}\left(\frac{4l+3+\eta_1}{4\tilde{T}} \right)- \hat{f}\left(\frac{4l+1-\eta_1}{4\tilde{T}} \right)  \geq 0,$$ and hence we conclude the first inequality. 

To show the second inequality we compute
\begin{align*}
S_4-S_3=& \sum\limits_{l \geq 0} (-1)^l \frac{4}{(2l+1)\pi} \left( \hat{f}\left(\frac{2l+1+(-1)^{l+1}\eta_4}{4\tilde{T}} \right)- \hat{f}\left(\frac{2l+1+(-1)^{l+1}(2-\eta_4)}{4\tilde{T}} \right) \right) \\
=&\sum\limits_{l \geq 1} \left(\frac{4}{(4l-1)\pi}+\frac{4}{(4l+1)\pi} \right)\left( \hat{f}\left(\frac{4l+1-\eta_4}{4\tilde{T}} \right)- \hat{f}\left(\frac{4l-1+\eta_4}{4\tilde{T}} \right) \right).
\end{align*}
It can be easily seen that $\hat{f}\left(\frac{4l+1-\eta_4}{4\tilde{T}} \right)- \hat{f}\left(\frac{4l-1+\eta_4}{4\tilde{T}} \right) \geq 0$ for any $l \geq 1$, and therefore we have proved the second inequality. 

Finally for the third inequality we have
\begin{align*}
S_4-S_2=& \sum\limits_{l \geq 0} (-1)^l \frac{4}{(2l+1)\pi} \left( \hat{f}\left(\frac{2l+1+(-1)^{l+1}\eta_4}{4\tilde{T}} \right)- \hat{f}\left(\frac{2l+1+(-1)^{l+1}(-2-\eta_1)}{4\tilde{T}} \right) \right). 
\end{align*}
For even $l \geq 0$, we have $|2l+1-\eta_4| \leq |2l+3+\eta_1| $, and hence $$\hat{f}\left(\frac{2l+1-\eta_4}{4\tilde{T}} \right)- \hat{f}\left(\frac{2l+3+\eta_1}{4\tilde{T}} \right)  \geq 0,$$ while for odd $l \geq 0$,  since $|2l+1+\eta_4| \geq |2l-1-\eta_1|$, we get that 
 $$\hat{f}\left(\frac{2l+1+\eta_4}{4\tilde{T}} \right)- \hat{f}\left(\frac{2l-1-\eta_1}{4\tilde{T}} \right)  \leq 0.$$
Subsequently we conclude the third inequality.

Now we prove \eqref{1000}. According to \eqref{eq:1expression}and \eqref{eq:2expression}, we have that 
\begin{align*}
\pa_t u^T(t,x) +\frac{1}{2} \pa_{x_{i_h}x_{i_h}} u^T(t,x)=\frac{1}{16\sqrt{2(T-t)}} \left( 2S_h-\frac{3}{4}(S_1+S_2+S_3+S_4) +L_1-L_4\right),
\end{align*}
and therefore the inequality is equivalent to 
\begin{align}\label{1000'}
L_1 -\frac{3}{4}(S_1+S_2) \leq L_4+\frac{3}{4}(S_3+S_4)-2S_h.
\end{align}
We have shown that $S_1 \leq S_2 \leq S_4, S_3 \leq S_4$. Subsequently, it is enough for us to prove the inequality for the case $h=4$. 
According to \eqref{lastS} and \eqref{lastL} can be checked that 
\begin{align*}
L_4+\frac{3}{4}S_3-\frac{5}{4} S_4=&\sum\limits_{l \geq 1} \left(\left(\frac{2}{\pi}+\frac{3}{(4l+1)\pi}+\frac{5}{(4l-1)\pi} \right)\hat{f}\left(\frac{4l-1+\eta_4}{4\tilde{T}} \right) \right. \\
& \left. + \left( \frac{2}{\pi}-\frac{3}{(4l-1)\pi}-\frac{5}{(4l+1)\pi}\right)\hat{f}\left(\frac{4l+1-\eta_4}{4\tilde{T}} \right) \right),
\end{align*}
\begin{align*}
L_1-\frac{3}{4}(S_1+S_2)=\sum\limits_{ l \geq 1} \left(\frac{2}{\pi}-\frac{3}{(4l+1)\pi}+\frac{3}{(4l+3)\pi} \right)\left(\hat{f}\left(\frac{4l+1-\eta_1}{4\tilde{T}} \right)+\hat{f}\left(\frac{4l+3+\eta_1}{4\tilde{T}} \right) \right).
\end{align*}
Note that $0 \leq 4l+1-\eta_4 \leq 4l-1+\eta_4 \leq 4l+3+\eta_1 \leq 4l+1-\eta_1$ for any $l \geq 1$. Subsequently we have that 
\begin{align*}
\hat{f}\left(\frac{4l+1-\eta_4}{4\tilde{T}} \right) \geq \hat{f}\left(\frac{4l-1+\eta_4}{4\tilde{T}} \right) \geq \hat{f}\left(\frac{4l+3+\eta_1}{4\tilde{T}} \right) \geq \hat{f}\left(\frac{4l+1-\eta_1}{4\tilde{T}} \right),
\end{align*}
and hence the inequalities
\begin{align*}
 \left(\frac{2}{\pi}-\frac{3}{(4l+1)\pi}+\frac{3}{(4l+3)\pi} \right) & \left(\hat{f}\left(\frac{4l+1-\eta_1}{4\tilde{T}} \right)+\hat{f}\left(\frac{4l+3+\eta_1}{4\tilde{T}} \right) \right) \\
 \leq &  \left(\frac{4}{\pi}-\frac{6}{(4l+1)\pi}+\frac{6}{(4l+3)\pi} \right) \hat{f}\left(\frac{4l+3+\eta_1}{4\tilde{T}} \right)  \\
 \leq & \frac{4}{\pi} \hat{f}\left(\frac{4l-1+\eta_4}{4\tilde{T}} \right)\\
 \leq & \left(\left(\frac{2}{\pi}+\frac{3}{(4l+1)\pi}+\frac{5}{(4l-1)\pi} \right)\hat{f}\left(\frac{4l-1+\eta_4}{4\tilde{T}} \right) \right. \\
& \left. + \left( \frac{2}{\pi}-\frac{3}{(4l-1)\pi}-\frac{5}{(4l+1)\pi}\right)\hat{f}\left(\frac{4l+1-\eta_4}{4\tilde{T}} \right) \right),
\end{align*} 
from which we conclude that $L_1-\frac{3}{4}(S_1+S_2) \leq L_4+\frac{3}{4}S_3-\frac{5}{4} S_4$ and also the inequality \eqref{1000'}.
\end{proof}
\subsection{Proof of Theorem \ref{thm:asyopt}}\label{ss:asymptotic}
\begin{proof}
The dynamics of state $X_m$ is given by
$$
X_{m}=X_{m-1}+e_{J_m}-\mathbbm{1}_{\{I_m \in J_m \}}\1.
$$
Take any sequence $m_M \in \mathbb{N}$ and $x_{m_M} \in \mathbb{R}^4$ such that $\frac{ m_MT}{M} \to t$, $\frac{x_{m_M} \sqrt{T}}{\sqrt{M}} \to x$ as $M \to \infty$. Denote $t_m=\frac{mT}{M}$, $\Delta X_{m}=e_{J_m}-\mathbbm{1}_{\{I_m \in J_m \}}\1$, and define the scaled state $$\tilde{x}_{m_M}=\frac{x_{m_M} \sqrt{T}}{\sqrt{M}}, \ \ \ \ \tilde{X}_m=\frac{X_m \sqrt{T}}{\sqrt{M}}, \ \ \ \ \Delta \tilde{X}_m=\frac{\Delta X_m \sqrt{T}}{\sqrt{M}}.$$ 

{\it Step 1: $\overline u^T(t,x) \leq u^T(t,x)$.}
To prove the inequality, we rewrite
\begin{align*}
\frac{\overline V^M(m_M,x_{m_M})\sqrt{T}}{\sqrt{M}}& - u^T\left(t,x \right) = \frac{  \sup_{\b\in \cV}\E^{\a^*(M),\b} \left[\Phi(X_M)|X_{m_M}=x_{m_M} \right]   \sqrt{T}  }{\sqrt{M}}-u^{T}(t,x)   \notag \\
= &  \sup_{\b\in \cV}\E^{\a^*(M),\b} \left[u^T(T, \tilde{X}_M)-u^T(t_{m_M}, \tilde{X}_{m_M}) |\tilde{X}_{m_M}=\tilde{x}_{m_M} \right]  \notag \\
&+u^T(t_{m_M},\tilde{x}_{m_M})-u^{T}(t,x)  \notag \\
=& \sum_{m=m_M+1}^{M} \sup_{\b \in \cV} \E^{\a^*(M),\b} \left[\left(u^T\left(t_m ,\tilde{X}_m\right)-u^T\left(t_{m-1},\tilde{X}_{m-1}\right) \right) |\tilde{X}_{m_M}=\tilde{x}_{m_M} \right] \notag \\
&+u^T(t_{m_M},\tilde{x}_{m_M})-u^{T}(t,x). \notag 
\end{align*}

Note that 
\begin{align}
&\E^{\a,\b}  \left[u^T\left(t_m,\tilde{X}_m\right)-u^T\left(t_{m-1},\tilde{X}_{m-1}\right)|\tilde{X}_{m-1}=\tilde{x}_{m-1} \right] \tag{$\star$}\label{eq:star} \\
&=\E^{\a,\b} \left[\pa_x u^T\left(t_{m-1},\tilde{x}_{m-1}\right)^\top\Delta \tilde{X}_{m} \right]  \label{eq:discrete0} \\
&\quad +2\E^{\a,\b} \left[\int_0^{\sqrt{\frac{T}{M}}} \left(\sqrt{\frac{T}{M}}-s\right)\left(\pa_t u^T+ \frac{1}{2}e_{J_{m}}^\top \pa_{xx}u^Te_{J_{m}}\right)(t_{m-1},\tilde{x}_{m-1}+s\Delta X_m) ds \right]  \label{eq:discrete1} \\
& \quad +2\E^{\a,\b}  \left[\int_0^{\sqrt{\frac{T}{M}}}  \left(\sqrt{\frac{T}{M}}-s\right)\left(\pa_t u^T(t_{m-1},\tilde{X}_m)-\pa_t u^T(t_{m-1},\tilde{x}_{m-1}+s\Delta X_m)  \right)  ds \right] \label{eq:discrete2} \\
&\quad +\E^{\a,\b}  \left[\int_0^{\frac{T}{{M}}} \left( \pa_t u^T(t_{m-1}+s, \tilde{X}_m)-\pa_t u^T(t_{m-1},\tilde{X}_m)\right)ds \right] \label{eq:discrete3}.
\end{align}

By the definition of $\a^*(M)$, the player chooses expert $i$ with probability $\pa_{x_i} u^T(t_{m-1},\tilde{x}_{m-1})$ at round $m$ for all $i=1,2,3,4$. Subsequently, we have 
\begin{align*}
\E^{\a^*(M),\b}\left[\pa_x u^T\left(t_{m-1},\tilde{x}_{m-1} \right)^\top\Delta \tilde{X}_m\right]&=\E^{\b}\left[\sum\limits_{i=1}^4\pa_{x_i} u^T\left( e_{J_m}^\top \pa_x u^T-\mathbbm{1}_{\{i \in J_m\}}  \1^\top \pa_x u^T \right) \right] \sqrt{\frac{T}{M} } \\
&=\E^{\b}\left[e_{J_m}^\top \pa_x u^T-\sum\limits_{i=1}^4\mathbbm{1}_{\{i \in J_m\}}   \pa_{x_i}u^T  \right]\sqrt{\frac{T}{M} }=0,
\end{align*}
where all the partial derivatives of $u^T$ are evaluated at $(t_{m-1}, \tilde{x}_{m-1})$. 

As a result of the solution property of $u$, the term \eqref{eq:discrete1} is non-positive. Also, it is easy to find the partial derivatives $\pa^2_{tt}u(t,x)$ and $\pa^2_{tx_i} u(t,x)$ 
\begin{align*}
\pa^2_{tt} u^T(t,x)=&\frac{-1}{16\sqrt{2} }\int_{-\infty}^{\infty} r^2e^{-(T-t)r^2}\left(\psi\left({{r}\theta\cdot x^{o}+\frac{\pi}{2}}\right)\sum_{k=1}^4 \cos\left({r}{\a_k \cdot x}\right)  \right. \notag \\
 &\left. - \psi\left({{r}\theta\cdot x^{o}}\right)\sum\limits_{k=1}^4\sin\left({r}{\a_k \cdot x}\right)\right)dr,
\end{align*}
\begin{align*}
\pa^2_{tx_i}u^T(t,x)=&\frac{1}{16\sqrt{2} }\int_{-\infty}^{\infty} r e^{-(T-t)r^2}\sum\limits_{k=1}^4 \a_{k,i} \left(\psi\left({{r}\theta\cdot x^{o}+\frac{\pi}{2}}\right) \sin \left({r}{\a_k \cdot x}\right)  \right. \notag \\
 &\left. + \psi\left({{r}\theta\cdot x^{o}}\right)\cos\left({r}{\a_k \cdot x}\right)\right)dr.
\end{align*}
According to the boundedness of $\psi, \sin, \cos$, we obtain that
\begin{align*}
\left|\pa^2_{tt} u^T(t,x) \right| \leq \frac{1}{16\sqrt{2}} \int_{-\infty}^{\infty}  8 r^2 e^{-(T-t)r^2} dr=\frac{1}{2\sqrt{2(T-t)^3}}  \int_{-\infty}^{\infty}  r^2 e^{-r^2} dr \leq \frac{C}{\sqrt{(T-t)^3}} ,
\end{align*}
\begin{align*}
\left|\pa^2_{tx_i} u^T(t,x) \right| \leq \frac{1}{16\sqrt{2}} \int_{-\infty}^{\infty} 6\sqrt{2} r e^{-(T-t)r^2} dr \leq \frac{6}{16(T- t)} \int_{-\infty}^{\infty} r e^{-r^2} dr  \leq \frac{C}{T-t},
\end{align*}
where $C$ is a positive constant independent of $(t,x)$ and is allowed to change from line to line.  

Noting that the above estimation is independent of $x$, we can therefore estimate the bound of \eqref{eq:discrete2} and \eqref{eq:discrete3}. 
\begin{align}
&\left|\E^{\a, \b} \left[ 2\int_0^{\sqrt{\frac{T}{M}}}  \left(\sqrt{\frac{T}{M}}-s\right)\left(\pa_t u^T(t_{m-1},\tilde{X}_m)-\pa_t u^T(t_{m-1},\tilde{x}_{n-1}+s\Delta X_m)  \right)  ds \right] \right|  \notag \\
& \quad \quad  \leq C \E^{\a, \b} \left[  \int_0^{\sqrt{\frac{T}{M}}}  \left(\sqrt{\frac{T}{M}}-s\right) ds \int_{s}^{\sqrt{\frac{T}{M} }} \left|  \pa^2_{tx}u^T\left(t_{m-1}, \tilde{x}_{m-1}+u\Delta X_m\right)  \right|  du \right] \notag \\
& \quad \quad \leq \frac{C}{T-t_{m-1}} \int_0^{\sqrt{\frac{T}{M}}}  \left(\sqrt{\frac{T}{M}}-s\right)^2 ds = \frac{C}{(T-t_{m-1})M^{\frac{3}{2}}} , \label{end1}
\end{align}
\begin{align}
& \left| \E^{\a, \b}\left[\int_0^{\frac{T}{{M}}} \left( \pa_t u^T(t_{m-1}+s, \tilde{X}_m)-\pa_t u^T(t_{m-1},\tilde{X}_m)\right)ds \right] \right| \notag \\
& \quad \quad \leq C\E^{\a, \b} \left[ \int_0^{\frac{T}{M}} ds \int_0^s  \left| \pa^2_{tt} u^T(t_{m-1}+u, \tilde{X}_m)  \right| du\right] \notag \\
& \quad \quad = C\E^{\a,\b} \left[\int_0^{\frac{T}{M}} \left(\frac{T}{M}-s \right)\left| \pa^2_{tt} u^T(t_{m-1}+s, \tilde{X}_m)  \right| ds \right] \notag \\
& \quad \quad \leq C \int_0^{\frac{T}{M}} \frac{\frac{T}{M}-s}{(T-t_{m-1}-s)^{\frac{3}{2}}} ds. \label{end2}
\end{align}

Therefore we obtain that 
\begin{align*}
& \sup_{\b \in \cV} \E^{\a^*(M),\b}  \left[u^T\left(t_m,\tilde{X}_m\right)-u^T\left(t_{m-1},\tilde{X}_{m-1}\right)|\tilde{X}_{m-1}=\tilde{x}_{m-1} \right] \\
& \quad \quad \leq C\left( \frac{1}{(T-t_{m-1})M^{\frac{3}{2}}}+ \int_0^{\frac{T}{M}} \frac{\frac{T}{M}-s}{(T-t_{m-1}-s)^{\frac{3}{2}}} ds \right).
\end{align*}
Let us estimate
\begin{align*}
\sum\limits_{m=1}^M \frac{1}{(T-t_{m-1})M^{\frac{3}{2}}}=\frac{1}{TM^{\frac{1}{2}} } \sum\limits_{k=1}^M \frac{1}{k} 
  \leq  \frac{1}{TM^{\frac{1}{2}} } \int_{1/2}^{M+1/2} \frac{1}{\lambda} d\lambda = \frac{\ln(M+1/2)-\ln(1/2) }{TM^{\frac{1}{2}}},
\end{align*}
and 
\begin{align*}
\sum\limits_{m=1}^M \int_0^{\frac{T}{M}} \frac{\frac{T}{M}-s}{(T-t_{m-1}-s)^{\frac{3}{2}}} ds=&\sum\limits_{m=1}^{M-1} \int_0^{\frac{T}{M}} \frac{\frac{T}{M}-s}{(T-t_{m-1}-s)^{\frac{3}{2}}} ds+  \int_0^{\frac{T}{M}} \frac{\frac{T}{M}-s}{(\frac{T}{M}-s)^{\frac{3}{2}}} ds \\
\leq & \sum\limits_{m=1}^{M-1} \frac{T}{M} \int_0^{\frac{T}{M}} \frac{1}{(T-t_{m-1}-s)^{\frac{3}{2}}} ds+\frac{1}{2}\sqrt{\frac{T}{M} } \\
= & \frac{T}{M}\int_{\frac{T}{M}}^T \frac{1}{s^{\frac{3}{2}} } ds  +\frac{1}{2}\sqrt{\frac{T}{M} }=\frac{T}{M} \left(\sqrt{\frac{M}{T} }-\frac{1}{\sqrt{T}} \right)+\frac{1}{2}\sqrt{\frac{T}{M} }.
\end{align*}
Thus we conclude that 
\begin{align}
& \lim\limits_{M \to \infty} \sum\limits_{m=1}^M \left( \frac{1}{(T-t_{m-1})M^{\frac{3}{2}}}+ \int_0^{\frac{T}{M}} \frac{\frac{T}{M}-s}{(T-t_{m-1}-s)^{\frac{3}{2}}} ds \right)  \notag \\
& \quad \leq \lim\limits_{M \to \infty} \left(\frac{T}{M} \left(\sqrt{\frac{M}{T} }-\frac{1}{\sqrt{T}} \right)+\frac{1}{2}\sqrt{\frac{T}{M} }+\frac{\ln(M+1/2)-\ln(1/2) }{TM^{\frac{1}{2}}} \right)=0, \label{eq:findalestimate}
\end{align}
and furthermore 
\begin{align*}
\overline u^T(t,x)- &u^T(t,x)  = \limsup\limits_{\left(M, \frac{m_MT}{M}, \frac{x_{m_M}\sqrt{T}}{\sqrt{M}} \right) \to (\infty, t,x) } \left( \frac{\overline V^M(m_M, x_{m_M})\sqrt{T} }{\sqrt{M}}-u^T(t,x)\right) \\
\leq &  \limsup\limits_{\left(M, \frac{m_MT}{M}, \frac{x_{m_M}\sqrt{T}}{\sqrt{M}} \right) \to (\infty, t,x) }  \sum_{m=m_M+1}^M C\left( \frac{1}{(T-t_{m-1})M^{\frac{3}{2}}}+ \int_0^{\frac{T}{M}} \frac{\frac{T}{M}-s}{(T-t_{m-1}-s)^{\frac{3}{2}}} ds \right)  \\
&+  \limsup\limits_{\left(M, \frac{m_MT}{M}, \frac{x_{m_M}\sqrt{T}}{\sqrt{M}} \right) \to (\infty, t,x) }  \left(u^T(t_{m_M},\tilde{x}_{m_M})-u^T(t,x) \right)=0.
\end{align*}

{\it Step 2: $\underline u^T(t,x) \geq u^T(t,x)$.} Similarly, we have 
\begin{align}
\frac{\underline V^M(m_M,x_{m_M})\sqrt{T}}{\sqrt{M}}& - u^T\left(t,x \right) \notag \\
=& \sum_{m=m_M+1}^{M} \inf_{\a \in \cU} \E^{\a,\cJ_{\cC}^b(M)} \left[\left(u^T\left(t_m ,\tilde{X}_m\right)-u^T\left(t_{m-1},\tilde{X}_{m-1}\right) \right) |\tilde{X}_{m_M}=\tilde{x}_{m_M} \right] \notag \\
&+u^T(t_{m_M},\tilde{x}_{m_M})-u^{T}(t,x), \notag 
\end{align}
and we need to estimate the conditional expectation \eqref{eq:star}.
At round $m$, the adversary chooses experts $\cJ_\cC(\tilde{x}_{m-1})$ with probability $\frac{1}{2}$, 
and $\cJ_\cC^c(\tilde{x}_{m-1})$ with probability $\frac{1}{2}$. Therefore we compute 
\begin{align}
\E^{\a,\cJ_\cC^b(M)}&\left[\pa_x u^T\left(t_{m-1},\tilde{x}_{m-1} \right)^\top\Delta \tilde{X}_m\right] \notag \\
&= \E^{\a}\left[\frac{1}{2}\left(e_{\cJ_\cC}^\top \pa_{x}u^T-\mathbbm{1}_{\{I_m \in \cJ_\cC \} }\1^\top  \pa_{x}u^T \right)+\frac{1}{2}\left(e_{\cJ^c_\cC}^\top \pa_{x}u^T-\mathbbm{1}_{\{I_m \in \cJ^c_\cC \} } \1^\top  \pa_{x}u^T \right)\right] \sqrt{\frac{T}{M} } \notag \\
&=\E^{\a}\left[\frac{1}{2}\left(\1^\top \pa_x u^T-\1^\top \pa_x u^T \right) \right]=0 .\label{end3}
\end{align}
Since the bounds of \eqref{eq:discrete2} and \eqref{eq:discrete3} are the same, it remains to find the lower bound of \eqref{eq:discrete1} when the adversary adopts the comb strategy. We show that if $J_m= \cJ_\cC(\tilde{x}_{m-1})$, then the following inequality holds
\begin{align}\label{eq:lowerbound}
\left(\pa_t u^T+ \frac{1}{2}e_{J_{m}}^\top \pa_{xx}u^Te_{J_{m}}\right)(t_{m-1},\tilde{x}_{m-1}+s\Delta X_m) \geq - \frac{Cs}{T-t_{m-1}},
\end{align}
where $C$ is a positive constant independent of $\tilde{x}_{m-1}$ and is allowed to change from line to line. The proof for the case $J_m=\cJ^c_\cC(\tilde{x}_{m-1})$ is the same. To simplify the notation, in the following argument, we denote $\cJ= \cJ_\cC(\tilde{x}_{m-1})$ and $\tilde{x}_s=\tilde{x}_{m-1}+s \Delta X_m$. 

Note that if $\tilde{x}_{m-1}^{(3)} \geq \tilde{x}_{m-1}^{(2)}+\sqrt{\frac{T}{M}}$, then according to Subsection \ref{sss:property}, we have that for any $s \in \left[0, \sqrt{\frac{T}{M} } \right]$
\begin{align}\label{eq:equal}
\left(\pa_t u^T+ \frac{1}{2}e_{\cJ}^\top \pa_{xx}u^Te_{\cJ}\right)(t_{m-1},\tilde{x}_s)=0,
\end{align}
which satisfies \eqref{eq:lowerbound}. Otherwise there exists a unique $s_0 \in \left[0,\sqrt{\frac{T}{M} } \right]$ such that $\tilde{x}_{s_0}^{(2)}=\tilde{x}_{s_0}^{(3)}$, i.e., $\tilde{x}_{m-1}^{(3)} = \tilde{x}_{m-1}^{(2)}+s_0$.
Then for $s \in [0,s_0]$, we still have \eqref{eq:equal}, but for $s \in \left[ s_0, \sqrt{ \frac{T}{M}}\right]$, according to the definition of $\cJ$, the adversary actually selects the first two leading experts. Recall \eqref{eq:Jacobi},  
\begin{align}\label{eq:auxiliary1}
\left(\pa_t u^T+ \frac{1}{2}e_{\cJ}^\top \pa_{xx}u^Te_{\cJ}\right)(t_{m-1},\tilde{x}_s)=\frac{\theta_3(\nu_s, \hat{\tau}_s )-\theta_3(\mu_s, \hat{\tau}_s) }{4\sqrt{2} \theta \cdot \tilde{x}^o_s}, 
\end{align}
where 
$$\mu_s:= \frac{\pi \a_{1} \cdot \tilde{x}^o_s}{4 \theta \cdot \tilde{x}_s^{o}}+\frac{\pi}{4}, \ \ \ \nu_s:= \frac{\pi\a_{4} \cdot \tilde{x}^o_s}{4 \theta \cdot \tilde{x}^o_s}+\frac{\pi}{4}, \ \ \ \hat{\tau}_s:=\frac{i\pi(T-t_{m-1})}{4 \left(\theta \cdot \tilde{x}^o_s\right)^2}.$$
Since $\tilde{x}^{(2)}_{s_0}=\tilde{x}^{(2)}_{s_0}$, it can be easily checked that $\mu_{s_0}-\nu_{s_0}=\pi$. According to the definition of Jacobi-theta function, $\theta_3(z+\pi,\tau)=\theta_3(z,\tau)$ and hence $\theta_3(\mu_{s_0}, \hat{\tau}_{s_0})=\theta_3(\nu_{s_0}, \hat{\tau}_{s_0})$. Let us calculate $\mu_s - \nu_s$ for $s \geq s_0$, 
\begin{align*}
\mu_s-\nu_s =\frac{\pi}{4} \left( \frac{ \a_{1} \cdot \tilde{x}^o_{s_0}-2(s-s_0)}{ \theta \cdot \tilde{x}_{s}^{o}}-\frac{\a_{4} \cdot \tilde{x}^o_{s_0}+2(s-s_0)}{ \theta \cdot \tilde{x}^o_{s}} \right)=\pi-  \frac{\pi(s-s_0)}{\theta \cdot \tilde{x}^o_s }.
\end{align*}
Then we have the estimation 
\begin{align}
\left| \theta_3(\nu_s, \hat{\tau}_s )-\theta_3(\mu_s, \hat{\tau}_s)  \right|  =& \left| \theta_3\left(\mu_s+\frac{\pi(s-s_0) }{\theta \cdot \tilde{x}^o_s}, \hat{\tau}_s \right)-\theta_3(\mu_s, \hat{\tau}_s) \right| \notag \\
= &\left|\sum\limits_{n=-\infty}^{\infty} \exp\left(i \pi \hat{\tau}_s n^2 \right) \left( \cos\left(2n\left( \mu_s+\frac{\pi(s-s_0) }{\theta \cdot \tilde{x}^o_s}\right) \right)- \cos(2n \mu_s)\right) \right| \notag \\
\leq & \left| \sum\limits_{n=-\infty}^{\infty} \exp\left(i \pi \hat{\tau}_s n^2 \right)  \frac{2n\pi(s-s_0)}{\theta \cdot \tilde{x}^o_s}\right|. \label{eq:auxiliary2}
\end{align}

To finish proofing \eqref{eq:lowerbound}, we need an auxiliary result 
\begin{align}\label{eq:auxiliary}
\sup\limits_{\lambda >0} \left( \sum\limits_{n=1}^{\infty} n\lambda e^{-n^2\lambda} \right) =C< +\infty. 
\end{align}
According to the inequality,
\begin{align*}
\sum\limits_{n=1}^{\infty} n\lambda e^{-n^2\lambda} \leq \sum\limits_{n=1}^{\infty} n\lambda e^{-n\lambda} = \frac{\lambda e^{\lambda}}{(e^{\lambda}-1 )^2},
\end{align*}
we conclude that $\lim\limits_{ \lambda \to \infty} \sum\limits_{n=1}^{\infty} n\lambda e^{-n^2\lambda}=0$ and $\lambda \mapsto \sum\limits_{n=1}^{\infty} n\lambda e^{-n^2\lambda}$ is continuous over  $\mathbb{R}_{>0}$. It remains to show that $\limsup\limits_{\lambda \to 0} \sum\limits_{n=1}^{\infty} n\lambda e^{-n^2\lambda} < \infty$. Fix $\lambda >0$, we can view $\sum\limits_{n=1}^{\infty} n\lambda e^{-n^2\lambda}$ as the Riemann sum of the integral $\int_1^{\infty} t\lambda e^{-t^2\lambda} dt$. It can be easily seen that $t \mapsto t\lambda e^{-t^2\lambda}$ is increasing over $\left[0, \frac{1}{\sqrt{2\lambda}}\right]$ and decreasing over $\left[\frac{1}{\sqrt{2\lambda}}, \infty \right]$. Take $I(\lambda)$ to be largest integer that is smaller than or equal to $\frac{ 1}{\sqrt{2\lambda}}$. 
Then we obtain that  
\begin{align*}
\sum\limits_{n=1}^{I(\lambda)-1} n\lambda e^{-n^2\lambda} & \leq \int_0^{I(\lambda)} t\lambda  e^{-t^2\lambda} dt ,\\
\sum\limits_{I(\lambda)+2}^{\infty } n\lambda e^{-n^2\lambda} & \leq \int_{I(\lambda)+1}^{\infty} t\lambda  e^{-t^2\lambda} dt,
\end{align*}
and therefore 
\begin{align*}
\sum\limits_{n=1}^{\infty} n\lambda e^{-n^2\lambda} & \leq I(\lambda)\lambda e^{-I(\lambda)^2\lambda}+ (I(\lambda)+1)\lambda e^{-(I(\lambda)+1)^2\lambda} + \int_1^{\infty} t\lambda e^{-t^2\lambda} dt \\
& \leq  I(\lambda)\lambda e^{-I(\lambda)^2\lambda}+ (I(\lambda)+1)\lambda e^{-(I(\lambda)+1)^2\lambda}+ \frac{1}{2}e^{-\lambda}.
\end{align*}
As a result of  $I(x)=\floor{\frac{1}{\sqrt{2\lambda}}}$, we conclude that 
\begin{align*}
\limsup\limits_{ \lambda \to 0} \sum\limits_{n=1}^{\infty} n\lambda e^{-n^2\lambda} \leq 2 \lim\limits_{\lambda \to 0} \frac{\lambda}{\sqrt{2\lambda}} e^{-\frac{1}{2}}+\lim\limits_{\lambda \to 0} \frac{1}{2}e^{-\lambda}=\frac{1}{2}.
\end{align*}

Taking $\lambda=\frac{ \pi^2(T-t_{m-1})}{4 (\theta \cdot \tilde{x}^o_s)^2}$ in \eqref{eq:auxiliary}, and combining \eqref{eq:auxiliary1},\eqref{eq:auxiliary2}, \eqref{eq:auxiliary}, we obtain that 
\begin{align*}
& \left(\pa_t u^T+ \frac{1}{2}e_{\cJ}^\top \pa_{xx}u^Te_{\cJ}\right) (t_{m-1},\tilde{x}_s) =\frac{\theta_3(\nu_s, \hat{\tau}_s )-\theta_3(\mu_s, \hat{\tau}_s) }{4\sqrt{2} \theta \cdot \tilde{x}^o_s} \\
& \quad \quad \geq -\sum\limits_{n=-\infty}^{\infty} \exp\left(i \pi \hat{\tau}_s n^2 \right)  \frac{2n\pi(s-s_0)}{(\theta \cdot \tilde{x}^o_s)^2} \geq -\frac{C(s-s_0)}{T-t_{m-1}} \sum\limits_{n=1}^{\infty} n \lambda e^{-n^2 \lambda} \geq -\frac{Cs}{T-t_{m-1}},
\end{align*}
 and hence 
 \begin{align*}
& \E^{\a,\cJ} \left[\int_0^{\sqrt{\frac{T}{M}}} \left(\sqrt{\frac{T}{M}}-s\right)\left(\pa_t u^T+ \frac{1}{2}e_{J_{m}}^\top \pa_{xx}u^Te_{J_{m}}\right)(t_{m-1},\tilde{x}_{m-1}+s\Delta X_m) ds \right]  \\
& \quad \quad \geq -\frac{C}{T-t_{m-1}} \int_0^{\sqrt{\frac{T}{M}}} \left(\sqrt{\frac{T}{M}}-s\right)s ds 
\geq \frac{-C}{(T-t_{m-1})M^{3/2} }.
 \end{align*}
 In conjunction with \eqref{end1}, \eqref{end2}, \eqref{eq:findalestimate} and \eqref{end3}, we obtain that 
\begin{align*}
& \inf\limits_{\a \in \cU}\E^{\a,\cJ_\cC^b(M)}  \left[u^T\left(t_m,\tilde{X}_m\right)-u^T\left(t_{m-1},\tilde{X}_{m-1}\right)|\tilde{X}_{m-1}=\tilde{x}_{m-1} \right] \\
& \quad \quad \geq - C\left( \frac{1}{(T-t_{m-1})M^{\frac{3}{2}}}+ \int_0^{\frac{T}{M}} \frac{\frac{T}{M}-s}{(T-t_{m-1}-s)^{\frac{3}{2}}} ds \right),
\end{align*}
and finally 
\begin{align*}
\underline u^T(t,x)- &u^T(t,x)  = \liminf\limits_{\left(M, \frac{m_MT}{M}, \frac{x_{m_M}\sqrt{T}}{\sqrt{M}} \right) \to (\infty, t,x) } \left( \frac{\underline V^M(m_M, x_{m_M})\sqrt{T} }{\sqrt{M}}-u^T(t,x)\right) \\
\geq &  \liminf\limits_{\left(M, \frac{m_MT}{M}, \frac{x_{m_M}\sqrt{T}}{\sqrt{M}} \right) \to (\infty, t,x) }  -\sum_{m=m_M+1}^M C\left( \frac{1}{(T-t_{m-1})M^{\frac{3}{2}}}+ \int_0^{\frac{T}{M}} \frac{\frac{T}{M}-s}{(T-t_{m-1}-s)^{\frac{3}{2}}} ds \right)  \\
&+  \liminf\limits_{\left(M, \frac{m_MT}{M}, \frac{x_{m_M}\sqrt{T}}{\sqrt{M}} \right) \to (\infty, t,x) }  \left(u^T(t_{m_M},\tilde{x}_{m_M})-u^T(t,x) \right)=0.
\end{align*}
\end{proof}

\subsection{Proof of Proposition \ref{conject}}\label{ss.conject}
\begin{proof}
The estimates in \cite[Theorem 4]{Drenska2019} allows us to claim that there exists a constant $C>0$ so that for all $T>0$ and $(t,x)\in [0,T]\times \R^N$, 
$$|u^T(t,x)| \leq C(T-t+1+|x|).$$
Thus, the function $v$ defined by the expression 
\eqref{eq:laplace}
\begin{align*}
v(x):=\E\left[\int_0^\infty e^{- T} \Phi(X^{0,x}_T)dT\right]=\int_0^\infty e^{- T}u^T(0,x)dT.
\end{align*}
has at most linear growth and due to \eqref{eq:checkconject} it is $C^2$. 
The optimality of comb strategies implies that for all $T>0$ and $(t,x)\in [0,T)\times \R^N$,
$$\frac{1}{2}\sup_{J \in P(N)}e_{J}^\top \pa^2_{xx} u^T(t,x)e_J=\frac{1}{2}e_{\cJ_\cC(x)}^\top \pa^2_{xx} u^T(t,x)e_{\cJ_\cC(x)}=-\pa_t u^T(t,x).$$
Equation \eqref{eq:checkconject} and the optimality of comb strategies imply that
\begin{align*}
&\frac{1}{2}\sup_{J \in P(N)}e_{J}^\top \pa^2_{xx} v(x)e_J\geq \frac{1}{2}e_{\cJ_\cC(x)}^\top \pa^2_{xx} ve_{\cJ_\cC(x)}\geq \int_0^\infty e^{- T}\frac{1}{2}e_{\cJ_\cC(x)}^\top \pa^2_{xx} u^T(0,x)e_{\cJ_\cC(x)}dT\\
&= \int_0^\infty e^{- T}\frac{1}{2}\sup_{J \in P(N)}e_{J}^\top \pa^2_{xx} u^T(0,x)e_JdT \geq \frac{1}{2}\sup_{J \in P(N)}e_{J}^\top \pa^2_{xx} v(x)e_J.
\end{align*}
Thus, using the fact that for some function $u^\sharp$, $u^T(t,x)=u^\sharp(T-t,x)$ for all $T>0$ and $(t,x)\in [0,T]\times\R^N$, all the inequalities above are equalities and 
$$\frac{1}{2}\sup_{J \in P(N)}e_{J}^\top \pa^2_{xx} v(x)e_J= -\int_0^\infty e^{- T}\pa_{t} u^T(0,x)dT=\int_0^\infty e^{- T}u^T(0,x)dT-u^0(0,x)=v(x)-\Phi(x).$$
Given the uniqueness of viscosity solution with linear growth for \eqref{eq:pdegeo2} proven in \cite[Theorem 5.1]{MR1118699} $v=u$ and comb strategies are indeed optimal for the problem \eqref{eq:pdegeo2}.
\end{proof}

\appendix

\section{Solutions from Inverse Laplace Transform}\label{appendix}
\subsection{A heuristic derivation for $N=4$} 
We derive the solution of \eqref{eq:pde} when there are $4$ experts. From \cite[Proposition 6.1]{2019arXiv190202368B}, the solution of the linear PDE 
\begin{align}\label{eq:pdegeo}
u(x)-\frac{1}{2}e_{\cJ_\cC(x)}^\top \pa^2 u(x)e_{\cJ_\cC(x)}= \Phi(x),
\end{align}
is given by 
\begin{align*} 
 u(x)=& x^{(4)}-\frac{\sqrt{2}}{4} \sinh(\sqrt{2}(x^{(4)}-x^{(3)}))+\frac{1}{4\sqrt{2}} \arctan\left(e^{\theta\cdot x^o}\right)\sum_{k=1}^4\cosh\left({\a_k \cdot x^{o}}\right)\\
\notag&+\frac{1}{4\sqrt{2}}\arctanh\left(e^{\theta\cdot x^o}\right)\sum_{k=1}^4\sinh\left({\a_k \cdot x^{o}}\right).
\end{align*}
It is well-known that an elliptic PDE can be solved by applying the Laplace transform to the corresponding parabolic one. Here, to obtain the solution to \eqref{eq:pdegeolinear}, we formally compute the inverse Laplace transform of \eqref{eq:pdegeo}. It can be easily checked that for $\lambda \in \mathbb{R}_{+}$
$$u^\lambda (x)=\lambda^{-3/2}u(\sqrt{\lambda }x)$$
solves the equation
$$\lambda u^\lambda(x)-\frac{1}{2} e_{\cJ_\cC(x)}^\top \pa_{xx} u^\lambda (x) e_{\cJ_\cC(x)}=\Phi(x).$$
We formally extend the function $\lambda \mapsto u^{\lambda}(x)$ to the complex plane with $\mathbb{R}_{-}$ as its branch cut. Applying the inverse Laplace transform for $t \in \mathbb{R}_{+}$, 
\begin{align}\label{eq:inverselaplace}
u^{\#}(t,x)=\frac{1}{2\pi i}\int_{x_0-i\infty}^{x_0+i\infty}e^{t\lambda}u^{\lambda}(x)d\lambda,
\end{align}
should solve the PDE, at least heuristically, 
\begin{align*}
\pa_t u^{\#}(t,x)-\frac{1}{2}e_{\cJ_\cC(x)}^\top \pa^2_{xx} u^{\#}(t,x)e_{\cJ_\cC(x)}=0, \notag\\
u^{\#}(0,x)=\Phi(x),
\end{align*}
 where $x_0$ is chosen so that the function to integrate is analytic on the line of integration. Now the solution of \eqref{eq:pdegeolinear} is given by $$u^T(t,x)=u^{\#}(T-t,x).$$

Let us compute \eqref{eq:inverselaplace}. Since the functions $\arctan, \arctanh$ can be extended to the complex plane via the formulas,
\begin{align*}
\arctan(z)=\frac{1}{2i} \log\left(\frac{i-z}{i+z} \right), \quad \arctanh(z)=\frac{1}{2} \log\left(\frac{1+z}{1-z} \right),
\end{align*}
we obtain that 
\begin{align}
\notag u(x)=& x^{(4)}-\frac{\sqrt{2}}{4} \sinh(\sqrt{2}(x^{(4)}-x^{(3)}))+\frac{1}{8i\sqrt{2}} \log\left(\frac{i-e^{\theta\cdot x^o}}{i+e^{\theta\cdot x^o}}\right)\sum_{k=1}^4\cosh\left({\a_k \cdot x^{o}}\right) \notag \\
&+\frac{1}{8\sqrt{2}}\log\left(\frac{1+e^{\theta\cdot x^o}}{1-e^{\theta\cdot x^o}}\right)\sum_{k=1}^4\sinh\left({\a_k \cdot x^{o}}\right).\label{eq:u} 
\end{align} To cancel the singularity at $\lambda =0$, we rewrite   
\begin{align}\label{eq:inverse}
u^{\#}(t,x)&=\frac{1}{2\pi i}\int_{x_0-i\infty}^{x_0+i\infty}e^{t\lambda}\frac{u(\sqrt{\lambda}x)-u(0)}{\lambda^{3/2}}d\lambda+\frac{u(0)}{2\pi i}\int_{x_0-i\infty}^{x_0+i\infty}e^{t\lambda}\frac{1}{\lambda^{3/2}}d\lambda \notag \\
&=\frac{1}{2\pi i}\int_{x_0-i\infty}^{x_0+i\infty}e^{t\lambda}\frac{u(\sqrt{\lambda}x)-u(0)}{\lambda^{3/2}}d\lambda+\frac{1}{2}\sqrt{\frac{t\pi}{2}},
\end{align}
where we use the facts that $u(0)=\frac{\pi}{4\sqrt{2}}$, and the inverse Laplace transform of $\frac{1}{ \lambda^{3/2}}$ is $\frac{2 \sqrt{t} }{\sqrt{\pi}}$. Take $x_0=1$, $\epsilon >0$, $R>0$, and the contour in Figure \ref{figure3}.
\begin{figure}
\centering
\begin{tikzpicture}
\def\gap{0.2}
\def\bigradius{3}
\def\littleradius{0.4}

\draw [help lines,->] (-1.2*\bigradius, 0) -- (1.2*\bigradius,0);
 \draw [help lines,->]   (0, -1.2*\bigradius) -- (0, 1.2*\bigradius);
\draw[black, thick,   decoration={ markings,
      mark=at position 0.17 with {\arrow{latex}}, 
      mark=at position 0.35 with {\arrow{latex}},
      mark=at position 0.53 with {\arrow{latex}},
      mark=at position 0.755 with {\arrow{latex}},  
      mark=at position 0.82 with {\arrow{latex}}, 
      mark=at position 0.955 with {\arrow{latex}}}, 
      postaction={decorate}]  
  let
     \n1 = {asin(\gap/2/\bigradius)},
     \n2 = {asin(\gap/2/\littleradius)},
     \n3={asin(0.5/\bigradius)}
  in (-180+\n1:\bigradius) arc (-180+\n1:-90+\n3:\bigradius)
  --(90-\n3:\bigradius) arc (90-\n3:180-\n1:\bigradius)
  -- (180-\n2:\littleradius) arc (180-\n2:-180+\n2:\littleradius)
  -- cycle;
  
\node at (-0.3,0.5) {$\gamma_{\epsilon}$};
\node at (-1.8,2.8) {$\gamma_1$};
\node at (-1.8,-2.8) {$\gamma_2$};
\node at (-1.2,0.29) {$l_1$};
\node at (-1.555,-0.32) {$l_2$};
\node at (5.3,2.5) {$\gamma_1:= \{Re^{i \theta}: \theta \in [\pi/2-\arcsin(1/R),  \pi-\arcsin({\epsilon}^2/R)]\} $}; 
\node at (5.4,1.5) {$\gamma_2:= \{Re^{i \theta}: \theta \in [\pi+\arcsin({\epsilon}^2/R), 3\pi/2+\arcsin(1/R)]\} $};
\node at (5.75,0.5) {$l_1:= \{(t , {\epsilon}^2): t \in [-R \cos\left(\arcsin({\epsilon}^2/R)\right), -  \epsilon \cos\left(\arcsin(\epsilon) \right) ]     \} $};
\node at (5.9,-0.5) {$l_2:= \{(t , -{\epsilon}^2): t \in[-R \cos\left(\arcsin({\epsilon}^2/R)\right), -  \epsilon \cos\left(\arcsin(\epsilon) \right)]      \} $};
\node at (4.6,-1.5) {$\gamma_{\epsilon}:= \{\epsilon e^{i \theta}: \theta \in[-\pi+\arcsin({\epsilon}), \pi-\arcsin({\epsilon})   ]  \} $};
\end{tikzpicture}
\caption{Contour of inverse Laplace transform}
\label{figure3}
\end{figure}
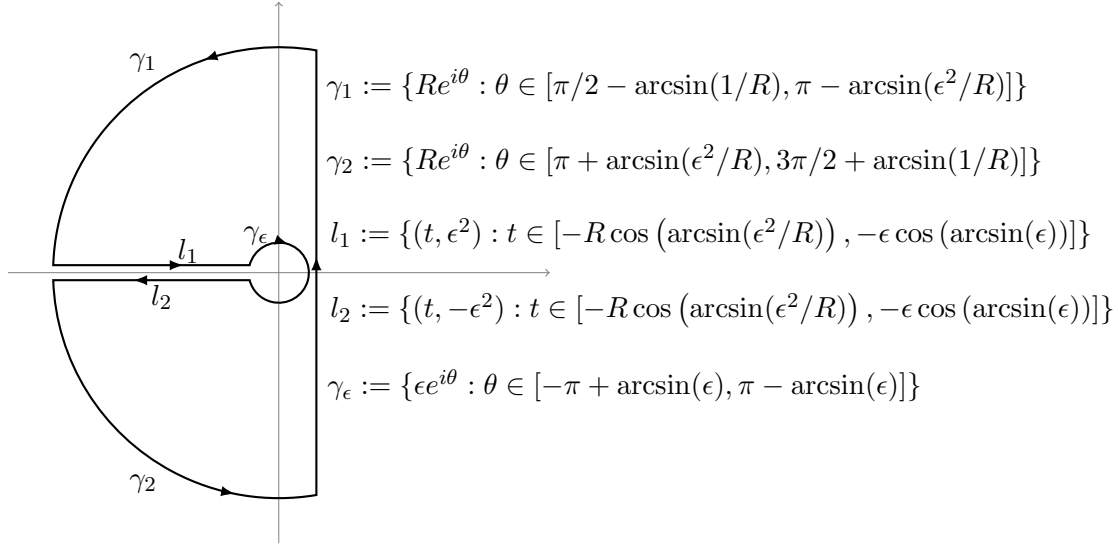
The integral of $e^{t \lambda} (u(\sqrt{ \lambda}x )-u(0)) /{\lambda}^{\frac{3}{2}} $ along the contour is zero. Letting $R \to \infty$, $\epsilon \to 0$, and assuming that the limit of the integral along $\gamma_1$, $\gamma_2$ vanish, we obtain that 
\begin{align*}
\frac{1}{2\pi i}\int_{x_0-i\infty}^{x_0+i\infty}e^{t\lambda}\frac{u(\sqrt{\lambda}x)-u(0)}{\lambda^{3/2}}d\lambda=-\lim\limits_{(R , \epsilon) \to (\infty, 0)} \frac{1}{2 \pi i } \int_{\gamma_{\epsilon}+l_1+l_2}e^{t\lambda}\frac{u(\sqrt{\lambda}x)-u(0)}{\lambda^{3/2}}d\lambda.
\end{align*}

It can be seen that 
\begin{align}\label{eq:branchpart}
&\lim\limits_{(R , \epsilon) \to (\infty, 0)} \frac{1}{2 \pi i } \int_{l_1+l_2}e^{t\lambda}\frac{u(\sqrt{\lambda}x)-u(0)}{\lambda^{3/2}}d\lambda  \notag \\
&\quad \quad  =\frac{1}{2\pi i}\int_{-\infty}^0 e^{t\lambda}\frac{u(\sqrt{\lambda}x)-u(0)}{\lambda^{\frac{3}{2}}} d\lambda+ \int_{0}^{-\infty} e^{t\lambda}\frac{u(\sqrt{\lambda}x)-u(0)}{\lambda^{\frac{3}{2}}} d\lambda, 
\end{align}
where the first integral is above the branch $\mathbb{R}_{-}$ and the second below. 
Thus the computation reduces to 
\begin{align*}
&\frac{1}{2\pi }\int_0^{\infty} e^{-tr}\frac{u(i\sqrt{r}x)+u(-i\sqrt{r}x)-2u(0)}{r^{\frac{3}{2}}} dr=\\
&\frac{1}{16\sqrt{2} }\int_0^{\infty} \frac{e^{-tr}}{r^{3/2}}\left(\frac{1}{i\pi} \left(\log\left(\frac{i-e^{i \sqrt{r}\theta\cdot x^o}}{i+e^{i \sqrt{r} \theta\cdot x^o}}\right)+\log\left(\frac{i-e^{-i \sqrt{r}\theta\cdot x^o}}{i+e^{-i \sqrt{r} \theta\cdot x^o}}\right)\right)\sum_{k=1}^4\cos\left(\sqrt{r}{\a_k \cdot x^{o}}\right)-4\right.\\
&\left.-\frac{1}{i\pi}\left(\log\left(\frac{1+e^{i\sqrt{r}\theta\cdot x^o}}{1-e^{i\sqrt{r}\theta\cdot x^o}}\right)-\log\left(\frac{1+e^{-i\sqrt{r}\theta\cdot x^o}}{1-e^{-i\sqrt{r}\theta\cdot x^o}}\right)\right)\sum_{k=1}^4\sin\left(\sqrt{r}{\a_k \cdot x^{o}}\right)\right)dr.
\end{align*}
For some values of $r$ depending on $x$, the first two $\log$ are respectively $\mp \infty$. But heuristically they cancel each other. Due to the factorizations 
\begin{align*}
\frac{i-e^{i \sqrt{r}\theta\cdot x^o}}{i+e^{i \sqrt{r} \theta\cdot x^o}}&=\frac{e^{i\left(\frac{\pi}{4}-\frac{\sqrt{r}\theta\cdot x^o}{2}\right)}-e^{i\left(-\frac{\pi}{4}+\frac{\sqrt{r}\theta\cdot x^o}{2}\right)}}{e^{i\left(\frac{\pi}{4}-\frac{\sqrt{r}\theta\cdot x^o}{2}\right)}+e^{i\left(-\frac{\pi}{4}+\frac{\sqrt{r}\theta\cdot x^o}{2}\right)}}=i\tan\left(\frac{\pi}{4}-\frac{\sqrt{r}\theta\cdot x^o}{2}\right),\\
\frac{1+e^{i\sqrt{r}\theta\cdot x^o}}{1-e^{i\sqrt{r}\theta\cdot x^o}}&=\frac{e^{i\left(-\frac{\sqrt{r}\theta\cdot x^o}{2}\right)}+e^{i\left(\frac{\sqrt{r}\theta\cdot x^o}{2}\right)}}{e^{i\left(-\frac{\sqrt{r}\theta\cdot x^o}{2}\right)}-e^{i\left(\frac{\sqrt{r}\theta\cdot x^o}{2}\right)}}=\frac{1}{-i \tan\left(\frac{\sqrt{r}\theta\cdot x^o}{2}\right)},
\end{align*}
and the identities 
\begin{align*}
\log\left(\frac{i}{ \tan\left(\frac{\sqrt{r}\theta\cdot x^o}{2}\right)}\right)-\log\left(\frac{-i}{ \tan\left(\frac{\sqrt{r}\theta\cdot x^o}{2}\right)}\right)&=i\pi sign\left(\tan\left(\frac{\sqrt{r}\theta\cdot x^o}{2}\right)\right),\\
\log\left(i\tan\left(\frac{\pi}{4}-\frac{\sqrt{r}\theta\cdot x^o}{2}\right)\right)+\log\left(i\tan\left(\frac{\pi}{4}+\frac{\sqrt{r}\theta\cdot x^o}{2}\right)\right)&=i\pi sign\left(\tan\left(\frac{\pi}{4}+\frac{\sqrt{r}\theta\cdot x^o}{2}\right)\right),
\end{align*}
it can be checked that the integral \eqref{eq:branchpart} becomes
\begin{align}
&\frac{1}{16\sqrt{2} }\int_0^{\infty} \frac{e^{-tr}}{r^{3/2}}\left(sign\left(\tan\left(\frac{\pi}{4}+\frac{\sqrt{r}\theta\cdot x^o}{2}\right)\right)\sum_{k=1}^4\cos\left(\sqrt{r}{\a_k \cdot x^{o}}\right)-4\right. \notag \\
&\left.- sign\left(\tan\left(\frac{\sqrt{r}\theta\cdot x^o}{2}\right)\right)\sum_{k=1}^4\sin\left(\sqrt{r}{\a_k \cdot x^{o}}\right)\right)dr. \label{eq:branchpart2}
\end{align}

According to \cite[Equation (3.4)]{2019arXiv190202368B}, we have $\lim\limits_{\lambda \to 0} \frac{u(\lambda x)-u(0) }{\lambda}=\frac{1}{4} \left(x_1+x_2+x_3+x_4 \right)$, and therefore 
\begin{align*}
\frac{1}{2 \pi i} \lim\limits_{\epsilon \to 0} \int_{\gamma_{\epsilon}} e^{t \lambda} \frac{u(\sqrt{\lambda}x )-u(0) }{{\lambda}^{\frac{3}{2}}} d\lambda=&\frac{-1}{2 \pi i} \lim\limits_{\epsilon \to 0} \int_0^{2 \pi} e^{t \epsilon e^{i\theta} }  \frac{u(\sqrt{\epsilon e^{i\theta}}x )-u(0) }{{\epsilon e^{i\theta}}^{\frac{1}{2}}}  i d \theta  \\
=& -\frac{1}{4}(x_1+x_2+x_3+x_4).
\end{align*}
In conjunction with \eqref{eq:inverse} and \eqref{eq:branchpart2}, we get that 
\begin{align*}
u^{\#}(t,x)=&\frac{-1}{16\sqrt{2} }\int_0^{\infty} \frac{e^{-tr}}{r^{3/2}}\left(sign\left(\tan\left(\frac{\pi}{4}+\frac{\sqrt{r}\theta\cdot x^o}{2}\right)\right)\sum_{k=1}^4\cos\left(\sqrt{r}{\a_k \cdot x^{o}}\right)-4\right. \notag \\
&\left.- sign\left(\tan\left(\frac{\sqrt{r}\theta\cdot x^o}{2}\right)\right)\sum_{k=1}^4\sin\left(\sqrt{r}{\a_k \cdot x^{o}}\right)\right)dr+ \frac{1}{4}\sum\limits_{i=1}^4 x_i + \frac{1}{2}\sqrt{\frac{ t \pi}{2} } \\
=& \frac{-1}{8\sqrt{2} }\int_0^{\infty} \frac{e^{-tr^2}}{r^2}\left(sign\left(\tan\left(\frac{\pi}{4}+\frac{r\theta\cdot x^o}{2}\right)\right)\sum_{k=1}^4\cos\left(r{\a_k \cdot x^{o}}\right)-4\right. \notag \\
&\left.- sign\left(\tan\left(\frac{r\theta\cdot x^o}{2}\right)\right)\sum_{k=1}^4\sin\left(r{\a_k \cdot x^{o}}\right)\right)dr+ \frac{1}{4}\sum\limits_{i=1}^4 x_i + \frac{1}{2}\sqrt{\frac{ t \pi}{2} },
\end{align*}
where the last equality follows from the change of variable. Since $u^T(t,x)=u^{\#}(T-t,x)$, we obtain \eqref{eq:solution}.

\subsection{Explicit expressions for $N=3$}\label{ss:derivation3}
According to \cite[Theorem 8]{Drenska2019}, the value function in the geometric stopping case is given by 
\begin{align}\label{eq:elliptic3}
u(x)=x^{(3)} + \frac{1}{2\sqrt{2}}e ^{\sqrt{2}(x^{(2)}-x^{(3)})} + \frac{1}{6\sqrt{2}}e^{\sqrt{2}(2x^{(1)}-x^{(2)}-x^{(3)})},
\end{align}
which solved \eqref{eq:pdegeo2} with $N=3$. 
We compute the inverse Laplace transform $$u^{\#}(t,x)=\frac{1}{2\pi i}\int_{x_0-i\infty}^{x_0+i\infty}e^{t\lambda}\lambda^{-3/2}u(\sqrt{\lambda}x)d\lambda,$$ 
where we extend the function $\lambda \mapsto u(\sqrt{ \lambda}x ) /{\lambda}^{\frac{3}{2}}$ naturally to $\mathbb{C} \setminus \mathbb{R}_{-}$.
The inverse Laplace transform of $\frac{1}{s}$, $\frac{1}{\sqrt{s}}$ and  $\frac{e^{-a\sqrt{s} } }{s}$ are $1$, $\frac{1}{\sqrt{\pi t}} $ and $erfc\left(\frac{a}{2\sqrt{t}}\right)$ respectively, where $erfc$ is the complementary error function (see e.g. \cite{MR1225604}). Subsequently according to the convolution theorem, it can be easily checked that
\begin{align*}
&u^{\#}(t,x)=x^{(2)}+\frac{1}{3}(2x^{(1)}-x^{(2)}-x^{(3)})+\frac{\sqrt{t}e^{\frac{-(2x^{(1)}-x^{(2)}-x^{(3)})^2}{2t}}}{3\sqrt{2\pi}}+ \frac{\sqrt{t}e^{\sqrt{t}\frac{-(x^{(2)}-x^{(3)})^2}{2t}}}{\sqrt{2\pi}}\\
&\quad \quad \quad \quad   -\frac{1}{3\sqrt{\pi}}(2x^{(1)}-x^{(2)}-x^{(3)})\int_{\frac{2x^{(1)}-x^{(2)}-x^{(3)}}{\sqrt{2t}}}^{\infty}e^{-y^2} dy-\frac{1}{\sqrt{\pi}}(x^{(2)}-x^{(3)})\int_{\frac{x^{(2)}-x^{(3)}}{\sqrt{2t}}}^{\infty} e^{-y^2} dy.
\end{align*}
Then $u^T(t,x):=u^{\#}(T-t,x)$ is our conjectured solution to \eqref{eq:pde} with $N=3$.

\begin{prop}\label{prop:solution}
The explicit solution to Equation \eqref{eq:pde} with $N=3$ is given by 
\begin{align}\label{eq:parobalic3}
&u^T(t,x):=x^{(2)}+\frac{1}{3}(2x^{(1)}-x^{(2)}-x^{(3)})+\frac{\sqrt{T-t}e^{\frac{-(2x^{(1)}-x^{(2)}-x^{(3)})^2}{2(T-t)}}}{3\sqrt{2\pi}} \notag \\
&\quad \quad \quad \quad+ \frac{\sqrt{T-t}e^{\sqrt{T-t}\frac{-(x^{(2)}-x^{(3)})^2}{2(T-t)}}}{\sqrt{2\pi}} -\frac{1}{3\sqrt{\pi}}(2x^{(1)}-x^{(2)}-x^{(3)})\int_{\frac{2x^{(1)}-x^{(2)}-x^{(3)}}{\sqrt{2(T-t)}}}^{\infty}e^{-y^2} dy \notag \\
&\quad \quad \quad \quad -\frac{1}{\sqrt{\pi}}(x^{(2)}-x^{(3)})\int_{\frac{x^{(2)}-x^{(3)}}{\sqrt{2(T-t)}}}^{\infty} e^{-y^2} dy.
\end{align}
\end{prop}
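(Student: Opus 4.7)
The plan is to mirror the argument of Theorems \ref{thm:propertiesu} and \ref{thm:optimality} but with a much simpler building block. Set $\tau := T-t$, $a := 2x^{(1)} - x^{(2)} - x^{(3)} \le 0$, $b := x^{(2)} - x^{(3)} \le 0$, and introduce the one-dimensional heat potential
\begin{equation*}
h(\tau, z) := \sqrt{\tfrac{\tau}{2\pi}}\,e^{-z^{2}/(2\tau)} - \tfrac{z}{2}\,\mathrm{erfc}\!\left(\tfrac{z}{\sqrt{2\tau}}\right),
\end{equation*}
which satisfies $\pa_{\tau}h = \tfrac{1}{2}\pa^{2}_{zz}h$ with $h(0,z) = \max(-z,0)$. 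Direct substitution rewrites \eqref{eq:parobalic3} as
\begin{equation*}
u^{T}(t,x) \;=\; x^{(2)} + \tfrac{a}{3} + \tfrac{1}{3}\,h(\tau,a) + h(\tau,b),
\end{equation*}
manifestly symmetric in $x$. The terminal condition is then immediate: since $a,b\le 0$, $u^{T}(T,x) = x^{(2)} + a/3 - a/3 - b = x^{(3)} = \Phi(x)$.

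Work in the open chamber $x_{1}<x_{2}<x_{3}$ (the other chambers follow by symmetry), so $(i_{1},i_{2},i_{3})=(1,2,3)$, $\nabla a = (2,-1,-1)$ and $\nabla b = (0,1,-1)$. Writing $H(z) := \pa^{2}_{zz}h(\tau,z) = (2\pi\tau)^{-1/2}e^{-z^{2}/(2\tau)}$, the chain rule together with $\pa_{z}h = -\tfrac{1}{2}\mathrm{erfc}(z/\sqrt{2\tau})$ gives $\sum_{i=1}^{3}\pa_{x_{i}}u^{T} \equiv 1$ (hence $\pa^{2}_{xx}u^{T}\1 = 0$) and
\begin{equation*}
e_{\{i_{1},i_{3}\}}^{\top}\pa^{2}_{xx}u^{T}e_{\{i_{1},i_{3}\}} \;=\; e_{\{i_{2}\}}^{\top}\pa^{2}_{xx}u^{T}e_{\{i_{2}\}} \;=\; \tfrac{1}{3}H(a) + H(b) \;=\; -2\,\pa_{t}u^{T},
\end{equation*}
the last equality coming from $\pa_{\tau}h = \tfrac{1}{2}H$. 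This establishes the linear comb PDE analogous to \eqref{eq:pdegeolinear}.

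To promote this to the nonlinear PDE \eqref{eq:pde}, the identity $\pa^{2}_{xx}u^{T}\1 = 0$ pairs each $J$ with $J^{c}$, reducing the supremum over $P(3)$ to comparing $\{i_{1}\}, \{i_{2}\}, \{i_{3}\}$ and $\{i_{1},i_{3}\}$. Explicit computation yields $e_{\{i_{1}\}}^{\top}\pa^{2}_{xx}u^{T}e_{\{i_{1}\}} = \tfrac{4}{3}H(a)$, while $\{i_{2}\}, \{i_{3}\}, \{i_{1},i_{3}\}$ all yield $\tfrac{1}{3}H(a)+H(b)$, so the entire question collapses to the single pointwise bound $H(a)\le H(b)$, i.e.\ $|a|\ge |b|$. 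But $|a|-|b| = (x^{(2)}+x^{(3)}-2x^{(1)}) - (x^{(3)}-x^{(2)}) = 2(x^{(2)}-x^{(1)}) \ge 0$, so this is automatic and the comb strategy attains the supremum. The main technical nuisance, as in Theorem \ref{thm:propertiesu}, is $C^{2}$-regularity across the hyperplanes $\{x^{(1)}=x^{(2)}\}$ and $\{x^{(2)}=x^{(3)}\}$ where the reordering indices jump; here it is mild, because $b$ depends only on $(x^{(2)},x^{(3)})$ and is smooth across $\{x^{(1)}=x^{(2)}\}$, while at $\{x^{(2)}=x^{(3)}\}$ the variable $b$ vanishes and the one-sided derivatives agree by direct inspection of the symmetric formula. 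Combined with viscosity uniqueness from \cite{MR3768426}, this identifies $u^{T}$ with the continuous limit of the dynamic programming value function.
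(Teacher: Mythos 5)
Your verification is essentially correct and supplies exactly the ``straightforward computations'' that the paper omits; the reformulation $u^T = x^{(2)} + a/3 + \tfrac{1}{3}h(\tau,a) + h(\tau,b)$ with the heat potential $h$ is a clean way to see both the terminal condition and the linear comb PDE, and the reduction of the sup to the single inequality $H(a)\le H(b)$, i.e.\ $|a|\ge|b|$, is right. (You also correctly and silently fix the paper's evident typo, the stray $\sqrt{T-t}$ in one exponent.)

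There is, however, a genuine error in your regularity argument at $\{x^{(1)}=x^{(2)}\}$. You claim $b$ ``is smooth across $\{x^{(1)}=x^{(2)}\}$,'' but it is not: near such a point (with $x^{(2)}<x^{(3)}$ strictly), $x^{(2)}=\max(x_1,x_2)$ so \emph{both} $a$ and $b$ have corners there. The actual reason $u^T$ stays $C^2$ is a cancellation. Writing $s=\tfrac{x_1+x_2}{2}-x_3$ and $d=|x_1-x_2|$, one has $a=s-\tfrac{3}{2}d$ and $b=s+\tfrac{1}{2}d$, so
\[
\tfrac{1}{3}h(\tau,a)+h(\tau,b)=\tfrac{4}{3}h(\tau,s)+\left(-\tfrac{1}{3}\cdot\tfrac{3}{2}+\tfrac{1}{2}\right)d\,\pa_z h(\tau,s)+\tfrac{1}{2}d^2\,\pa^2_{zz}h(\tau,s)+O(d^3),
\]
and the $d$-linear term vanishes; since $d^2=(x_1-x_2)^2$ is smooth and the $O(d^3)$ remainder is $C^2$, the expression is $C^2$ across the hyperplane. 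Equivalently, on $\{x^{(1)}=x^{(2)}\}$ one has $a=b$, so $\pa_z h(\tau,a)=\pa_z h(\tau,b)$ and $H(a)=H(b)$, and the effective gradient $\tfrac{1}{3}\nabla a+\nabla b$ and weighted Hessian $\tfrac{1}{3}\nabla a\nabla a^{\top}+\nabla b\nabla b^{\top}$ are each invariant under the transposition $(1\leftrightarrow 2)$; the conclusion is right, but the justification you give needs to be replaced by this. Your treatment of $\{x^{(2)}=x^{(3)}\}$, where the kink in $-x^{(3)}$ cancels against $\pa_z h(\tau,0)=-\tfrac{1}{2}$ and $\nabla b\nabla b^{\top}$ is insensitive to the sign flip, is fine.
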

\begin{proof}
The proof follows from straightforward computations and is left to the reader.
\end{proof}

\bibliographystyle{siam}
\bibliography{ref}

\end{document}